\newtheorem{theorem}{Theorem}
\newtheorem*{thm}{Theorem}
\newtheorem{lemma}{Lemma}
\theoremstyle{definition}
\newtheorem{assumption}{Assumption}
\newtheorem{cor}{Corollary}
\newtheorem{defn}{Definition}
\theoremstyle{remark}
\newtheorem{remark}[defn]{Remark}
\DeclareMathOperator{\area}{\text{area}}
\DeclareMathOperator{\tTop}{p}
\DeclareMathOperator{\tBott}{q}
\DeclareMathOperator{\epsB}{\epsilon_b}
\DeclareMathOperator{\epsT}{\epsilon_t}
\DeclareMathOperator{\tTCent}{\left( p\, --\, \frac{1}{2} \right)}
\DeclareMathOperator{\tBCent}{\left( q\, --\, \frac{1}{2} \right)}
\DeclareMathOperator{\Th}{\theta}
\DeclareMathOperator{\sinc}{\text{sinc}}
\newcommand{\pmat}[1]{\begin{pmatrix} #1 \end{pmatrix}}
\begin{document}

\title[]{On the behavior of $1$-Laplacian Ratio Cuts on nearly rectangular domains}
\keywords{}
\subjclass[2010]{} 

\author[]{Wesley Hamilton}
\address{Department of Mathematics, University of North Carolina at Chapel Hill, NC 27599, USA}
\email{wham@live.unc.edu}

\author[]{Jeremy L. Marzuola}
\address{Department of Mathematics, University of North Carolina at Chapel Hill, NC 27599, USA}
\email{marzuola@email.unc.edu}

\author[]{Hau-tieng Wu}
\address{Department of Mathematics and Department of Statistical Science, Duke University, Durham, NC 27708, USA}
\email{hauwu@math.duke.edu}

\begin{abstract}  Given a connected set $\Omega_0 \subset \mathbb{R}^2$, define a sequence of sets $(\Omega_n)_{n=0}^{\infty}$
where $\Omega_{n+1}$ is the subset of $\Omega_n$ where the first eigenfunction of the (properly normalized) Neumann $p-$Laplacian
$  -\Delta^{(p)} \phi = \lambda_1 |\phi|^{p-2} \phi$ is positive (or negative).  For $p=1$, this is also referred to as the Ratio Cut of the domain. We conjecture that, unless $\Omega_0$ is
an isosceles right triangle, these sets converge to the set of rectangles with eccentricity bounded by 2 in the Gromov-Hausdorff distance as long as they have a certain
distance to the boundary $\partial \Omega_0$. We establish some
aspects of this conjecture for $p=1$ where we prove that (1) the 1-Laplacian spectral cut of domains sufficiently close to rectangles is a circular arc that is closer to flat than the original domain (leading eventually to 
quadrilaterals) and (2) quadrilaterals close to a rectangle of aspect ratio $2$ stay close to quadrilaterals and move closer to rectangles in a suitable metric.
We also discuss some numerical aspects and pose many open questions.
\end{abstract}

\maketitle

\section{Introduction and Motivation}

 This paper is motivated by work of Szlam, Maggioni, Coifman \& Bremer \cite{szlam2} and an observation made explicit by
Szlam \cite{szlam}: taking iterated spectral cuts induced by the nodal set of the first non-constant eigenfunction for the Neumann $p$-Laplacian seems to converge to rectangles in shape. 
It is already observed in \cite{szlam} that starting with an isosceles right triangle will lead to a spectral cut along the symmetry axis and produce two smaller isoceles right triangles: no convergence to rectangles takes place. However, this is unstable under small perturbations of the initial domain. 
\begin{center}
\begin{figure}[ht!]
\includegraphics[width=0.85\textwidth]{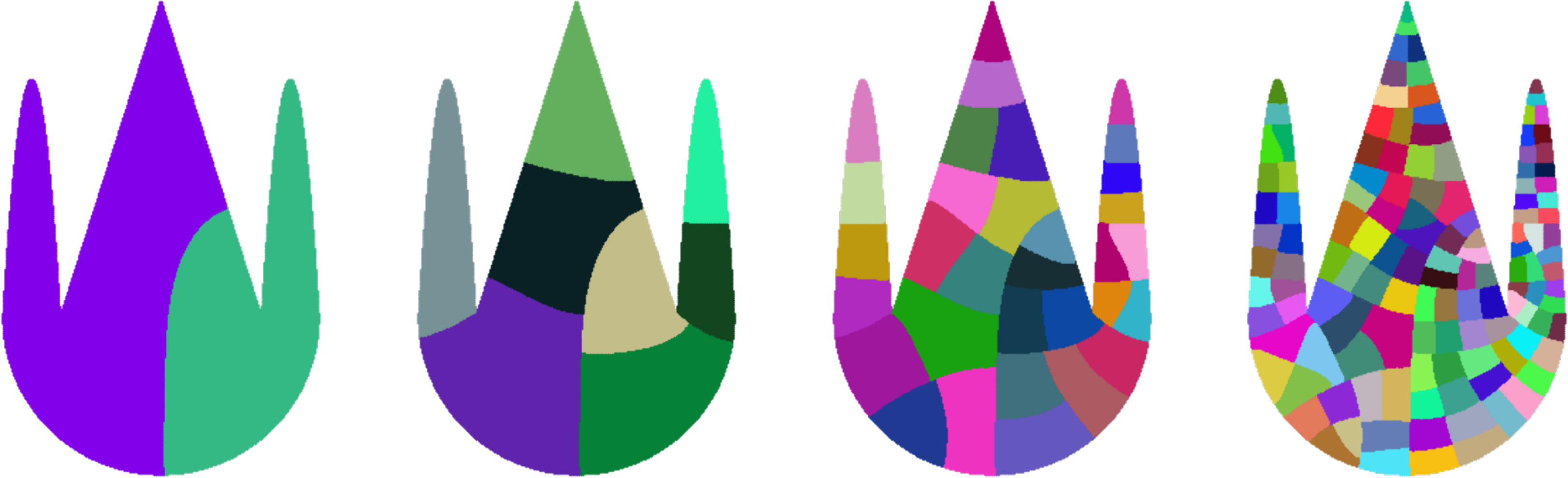}
\caption{Iterated spectral cuts of the standard graph Laplacian seem to lead to rectangles in a generic, non-convex, non-smooth domain.}
\end{figure}
\end{center}
\vspace{-20pt}

We believe this to be a fascinating question in itself but an affirmative answer would also be useful in guaranteeing that iterative spectral partitioning is an effective method to partition domains and, ultimately, graphs and data (see Irion \& Saito \cite{saito} where this phenomenon is exploited).  More importantly, a better understanding of this problem will shed light on the more general data analysis algorithms based on the $p$-Laplacian. More precisely, let $\Omega \subset \mathbb{R}^2$ be an open, bounded, and connected domain. We now propose an iterative subdivision of $\Omega$ as follows.
For any $1 \leq p < \infty$, the ground state of the $p-$Laplacian can be written as
\begin{equation} \label{definition p Lap ground state}
\lambda_{1,p}(\Omega) = \inf_{\int_{\Omega}{f dx} = 0} \frac{\int_{\Omega}{|\nabla f|^p dx}}{ \int_{\Omega}{|f|^p dx}}.\end{equation}
It is known that the function $f$ minimizing this functional exists and we can use it to iteratively define
\begin{equation}
\label{speccut}
 \Omega_{n+1} = \left\{ x \in \Omega_n: f(x) \geq 0\right\}\,,
 \end{equation}
where $n=0,1,2,\ldots$ and $\Omega_0:=\Omega$. 
The function $f$ is only defined up to sign, so restricting to the part of the domain where it is positive is without loss of generality. We raise the following conjecture (and refer to the subsequent paragraphs for clarification and obvious obstacles).
\begin{quote} \textbf{Main Conjecture.} If $\Omega_0$ is not the isosceles right triangle (having angles 45-90-45), then the sequence of sets $(\Omega_n)_{n=1}^{\infty}$ converges to the set of rectangles with eccentricity
bounded by 2 in the Gromov-Hausdorff distance.
\end{quote}
It is clear that one cannot expect convergence to a fixed rectangle: in general, the spectral cut of an $a \times b$ rectangle with $a > b$ will be given by two $(a/2) \times b$ rectangles and, as long as $a \leq 2b$,
the next cut would then yield two $(a/2) \times (b/2)$ rectangles. This motivates a refined question.

\begin{quote} \textbf{Question.} Do $(\Omega_{2n})_{n=1}^{\infty}$ and $(\Omega_{2n+1})_{n=1}^{\infty}$ converge in shape to a fixed rectangle?
\end{quote}

There is one obvious obstruction: if $\Omega_{0}$ is not already a rectangle, then while performing iterated subdivisions, there is always a sequence of choices for the sign of the
eigenfunction that ensures that part of the boundary of $\Omega_{n}$ coincides with part of the boundary of $\Omega$ which could possibly be quite ill-behaved (say, fractal).
However, for a sequence of choices of signs that leads to domains bounded away from $\partial \Omega_0$ ('deep' inside the domain) this should not be the case.

\begin{figure} 
\centering
\includegraphics[width=.4\textwidth]{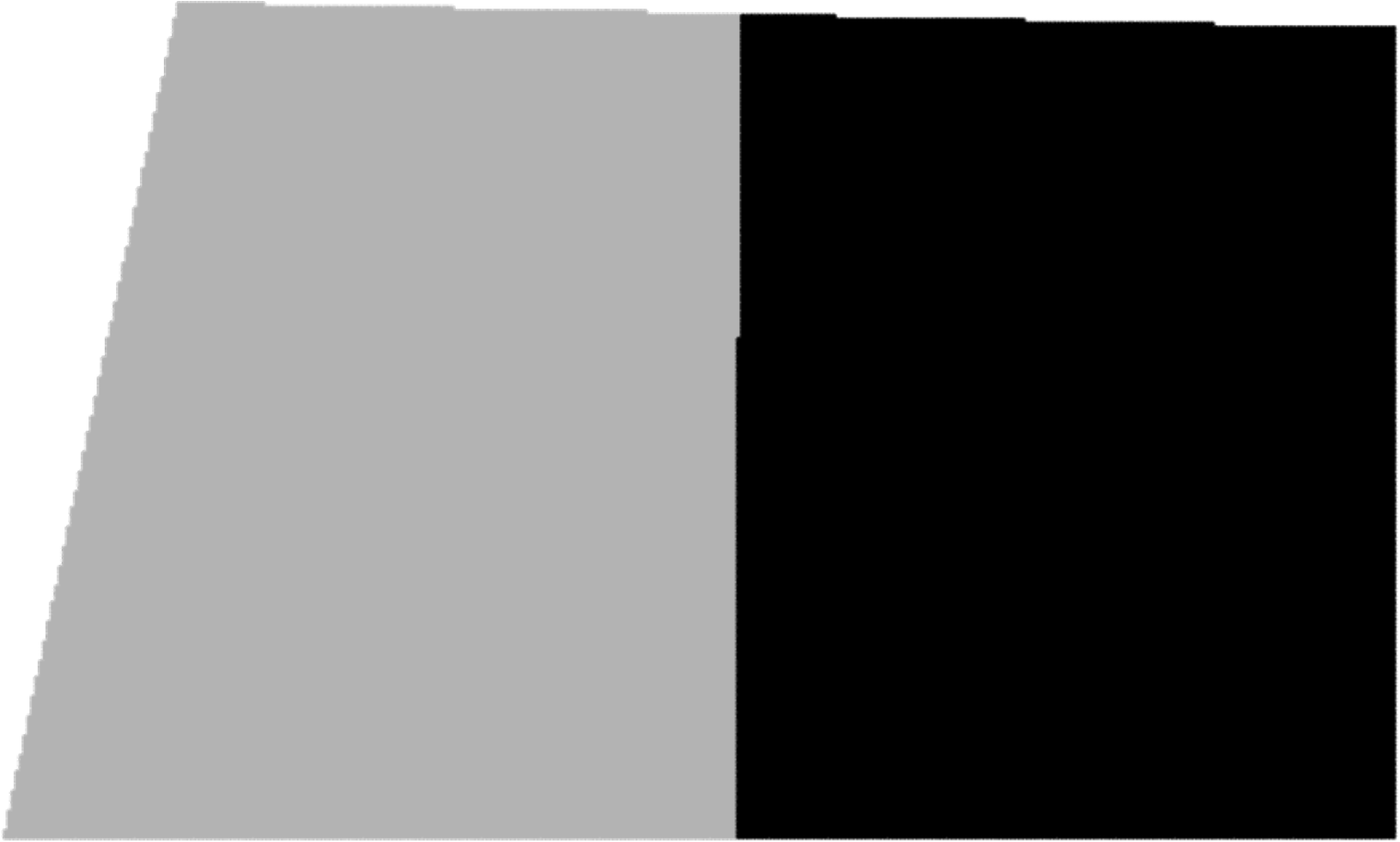}\hspace{60pt}
\includegraphics[width=.4\textwidth]{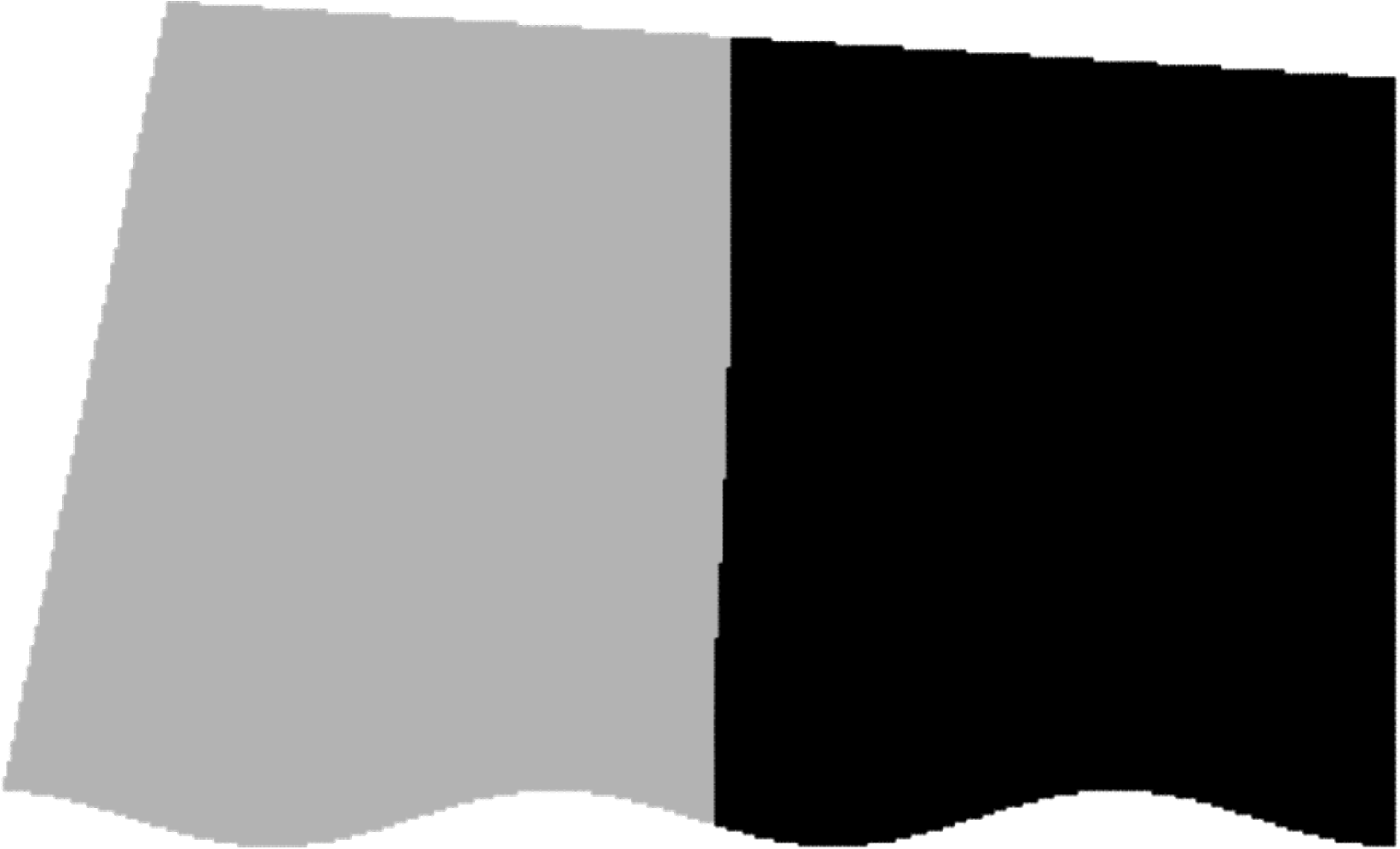}
\caption{Left: the spectral cut of a quadrilateral determined by four corners, $(0, 0)$, $(\pi/25, 3/5)$, $(1,0)$, and $(1, 3/5-\exp(1)/100)$ provided by the graph 1-Laplacian. Right: the spectral cut of a shape close to a quadrilateral provided by the graph 1-Laplacian.}\label{Fig:One}
\end{figure}

While this particular question seems to be novel, the problem of trying to understand the geometry of nodal cuts induced by the $p-$Laplacian or general nonlinear operators has
been studied for a long time. We refer to \cite{plap,plap2,plap3,plap4,plap5,plap6,plap7,plap8,plap9,plap0,plap10,plap11} and references therein. We especially
emphasize the works of Gajewski \& G\"artner \cite{plap7, plap8} who study the behavior of the cut as $p \rightarrow 1$ as a means of finding effective ways of separating the
domain into two roughly equally sized domains, as well as the work of Parini \cite{plap11} studying the limit of the cut as $p \rightarrow 1$ under Dirichlet boundary conditions.
Many of these results are posed for Dirichlet conditions where the effective functional as $p \rightarrow 1$ is given by
$$ \inf_{E \subset \Omega}{  \frac{ \mathcal{H}^{n-1}(\partial E)}{\mathcal{H}^{n}(E)}} \quad \mbox{while the Neumann case induces} \quad \inf_{E \subset \Omega}{  \frac{ \mathcal{H}^{n-1}(\partial E)}{\mathcal{H}^{n}(E)\mathcal{H}^{n}(\Omega \setminus E)}}.$$
When the domain has Neumann boundary conditions, the quantity above is referred to as the {\it Ratio Cut}.
New phenomena arise as a consequence. One interesting problem, that may also be of interest in the Dirichlet case, is the stability of the nodal set of the $p-$Laplacian
as a function of $p$. 

In the result presented here, we will focus on the $1$-Laplacian, where we can establish some preliminary results that suggest stability of rectangular domains as attractors for the proposed spectral dynamical system on domains.  In particular, from henceforward, we refer to the {\it $1$-spectral cut iteration} of a domain as the operation defined in \eqref{speccut} with $p=1$ and the {\it $1$-spectral cut} as the set $C_{n} := \left\{ x \in \Omega_n: f(x) = 0\right\}$. To simplify the terminology, we use {\it spectral cut} and {\it $1$-spectral cut} interchangeably.  

\textbf{Numerics.} For the reader's convenience, we briefly recall here the numerical implementation of the p-Laplacian and its related spectral cut \cite{Buhler_Hein:2009,Hein_Buhler:2010}.
Take a point cloud $\mathcal{X}:=\{x_i\}_{i=1}^N\subset (\mathcal M,d)$, a metric space $\mathcal M$ with the metric $d$. Construct an affinity matrix ${\bf W}\in \mathbb{R}^{N\times N}$, where $\bf W_{ij}$ is the affinity between $x_i$ and $x_j$. It is associated with an undirected affinity graph with $N$ vertices, where the affinity between $x_i$ and $x_j$, $w_{ij}$, is ${\bf W}_{ij}$ for $i,j=1,\ldots,N$. The {\em graph $p$-Laplacian} is defined by
\begin{equation}
\Delta_p f(i)=\sum_{j=1}^N w_{ij}\phi_p(f(i)-f(j)),
\end{equation}
where $f\in\mathbb{R}^N$ is a function defined on the vertices and $\phi_p(x)=\texttt{sign}(x)|x|^{p-1}$ for $x\in \mathbb{R}$. When $p=2$, this gives the bilinear form defined by the standard graph Laplacian ${\bf D} - {\bf W}$, where ${\bf D}=\texttt{diag}({\bf W}{\bf 1})$ and ${\bf 1}$ is a $N$-dim vector with all entries $1$. Clearly, in general the graph p-Laplacian is nonlinear. We have
\begin{equation}
\langle f,  \Delta_p f \rangle = \frac12 \sum_{i,j=1}^N w_{ij} |f_i - f_j|^p
\end{equation}
for $1 \leq p < \infty$.  See for instance \cite{Buhler_Hein:2009} for a discussion of the relationship between the variational formulation and the discrete operator formulation.

A real number $\lambda$ is called an  eigenvalue for the graph $p$-Laplacian if there exists a non-zero vector $f\in\mathbb{R}^N$ so that \cite[Definition 3.1]{Buhler_Hein:2009}
\begin{equation}
(\Delta_p f)_i=\lambda \phi_p(f_i)\,,
\end{equation}
where $i=1,\ldots,N$. We call $f$ the $p$-eigenfunction of of the graph $p$-Laplacian associated with the eigenvalue $\lambda$. We know that ${\bf 1}$ is the trivial eigenvector with eigenvalue $0$, and we have \cite[Lemma 3.2]{Buhler_Hein:2009}
\begin{equation}
{\bf 1}^T\phi_p(f)=0
\end{equation} 
if the eigenvector $f$ is non-trivial.
It is shown in \cite{Buhler_Hein:2009} that a non-zero function $f$ is an eigenvector if and only if it is a critical point (local minima) of the functional 
\begin{equation}
F_p(f)=\frac{\langle f,  \Delta_p f \rangle}{\|f\|_{\ell^p}^p}\,.
\end{equation}
Note that $F_p(f)$ is the discretization of the functional shown in \eqref{definition p Lap ground state}.
In this work, our main interest is the second $p$-eigenvector of the graph $p$-Laplacian for the spectral clustering purpose. In \cite{Buhler_Hein:2009}, the second eigenvalue is shown to be the global minimum of the functional
\begin{equation}
F^{(2)}_p(f):=\frac{\langle f,  \Delta_p f \rangle}{\texttt{var}_p(f)},
\end{equation}
where 
\begin{equation}
\texttt{var}_p(f)=\min_{c\in\mathbb{R}}\sum_{i=1}^N |f_i-c|^p\,,
\end{equation} 
and the corresponding eigenvector is then given by 
\begin{equation}
f_p^{(2)}=f^*-c^*{\bf 1}, 
\end{equation}
where $f^*$ is any global minimizer of $F^{(2)}_p$ and $c^*=\arg\min_{c\in \mathbb{R}} \sum_{i=1}^N |f^*_i-c|^p$. 
Like the usual spectral clustering, once we have $f_p^{(2)}$, we cluster the point cloud by the signs of its entries.  
For $p=1$, 
the consistency of spectral cut with the graph $1$-Laplacian was studied in \cite{trillos2016consistency}.
Numerically, we apply the nonlinear inverse power method proposed in \cite{Hein_Buhler:2010} to evaluate the iterative bi-partition of a given 2-dimensional domain.
In Figure \ref{Fig:One}, we present some numerically computed Ratio Cuts for nearly rectangular domains.

\textbf{Outline.} The paper proceeds as follows.  In Section \ref{sec:results}, we highlight the two main theorems we can prove on stability of near rectangular domains.  We also present some open problems to be considered naturally as generalizations of these theorems.   In Section \ref{sec:pf1}, we give the full proof that the spectral cut algorithm converges to a rectangle with bounded aspect ratio if the initial domain is near a rectangle in Gromov-Hausdorff sense as will be carefully laid out below.  Section \ref{sec:pf3} provides the details for the proof that quadrilaterals near the rectangle of aspect ratio $2$ in terms of small angle deviations from $90$ degrees will converge under the spectral cut algorithm to rectangles with bounded aspect ratio.   In the appendix we gather some long calculations that are useful in analyzing the Ratio Cut in a neighborhood of a quadrilateral.


\section*{Acknowledgements} 
We thankfully acknowledge the generous support of NSF CAREER Grant DMS-1352353  (J.L. Marzuola \& W. Hamilton).  We thank Stefan Steinerberger for starting this project with us and for many helpful conversations along the way.

\section{results} 
\label{sec:results}
We prove two results that, while not establishing the main conjecture, do seem to suggest a mechanism by which this procedure happens.
$$ \mbox{certain shapes} \underbrace{\implies}_{\mbox{Theorem 1}} \mbox{nearly straight cuts} \implies \mbox{curved quadrilaterals} \underbrace{\implies}_{\mbox{Theorem 2}} \mbox{rectangles.}$$
The missing steps are as follows: (1) we do not know whether a generic $\Omega_0 \subset \mathbb{R}^2$ will ever produce domains $\Omega_n$ for which Theorem 1 becomes applicable, for example, when $\Omega_0$ has a fractal boundary; 
and (2) we do not know whether the dynamical system on the space of rectangles ever produces a quadrilateral sufficiently close to the set of rectangles for Theorem 2 to become applicable.

\subsection{Rectangular stability.} The first of the two main results states that the spectral cut of domains that 'roughly' look like rectangles are being cut
in the middle. More formally, let us carefully describe the domains we will consider.
\begin{assumption}
\label{rectangle_assumptions}
We will work with domains that are small perturbations of a rectangle in the following sense:
\begin{itemize} 
\item The domain $Q$ is a perturbed
rectangle that is close in the Gromov-Hausdorff distance to a reference rectangle $R$: 
$$d_{GH}(Q, R) \leq \varepsilon \left( \mbox{length of the shorter side of}~R\right)\,,$$
where $\varepsilon>0$ is sufficiently small.
\item
In a roughly $10\sqrt{\varepsilon}-$neighborhood of the two intersection points of the $1$-spectral cut of $R$ with the boundary, the boundary of $Q$ 
can be written as graphs of functions of the associated boundary segments of $R$ (see Fig. \ref{fig:cut}). Moreover, each of these functions can be well approximated by a parabola with bounded, small curvature and small Lipschitz constant.
\end{itemize}
\end{assumption}

\begin{center}
\begin{figure}[ht!] 
\begin{tikzpicture}[scale = 2]
\draw [] (0,0) -- (1.61,0) -- (1.61, 1) -- (0, 1) -- (0,0);
\draw [dashed] (0.8,-0.2) -- (0.8, 1.2);
\draw [ultra thick] (0,0) to[out=10,in=170] (1, 0);
\draw [ultra thick] (1,0) to[out=340,in=200] (1.61, 0);
\draw [ultra thick] (1.61,0) to[out=80,in=280] (1.61, 1);
\draw [ultra thick] (1.61,1) to[out=170,in=10] (0, 1);
\draw [ultra thick] (0,1) to[out=260,in=100] (0, 0);
\draw [ultra thick, dashed] (0.8, -0.2) to[out=85,in=277] (0.81, 1.2);
\end{tikzpicture}
\caption{A perturbed rectangle $Q$ close to a reference rectangle $R$: the longer part of the boundary of $Q$ can be written as the graph of a Lipschitz function. The goal is to show that the spectral cut of $Q$ is close to that of $R$.}
\label{fig:cut}
\end{figure}
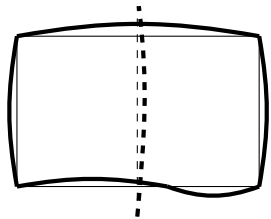
\end{center}

\begin{theorem}
\label{thm1} 
For a domain satisfying Assumption \ref{rectangle_assumptions}, the spectral cut occurs in a $\sqrt{\varepsilon}-$neighborhood of the midpoint of the long axis.  Moreover, there is a function $c(\varepsilon)$ tending to
0 as $\varepsilon \rightarrow 0$ such that, if said part of the boundary can be written as a Lipschitz function with Lipschitz constant $L \leq \sqrt{2} - c(\varepsilon)$ in the $\sqrt{\varepsilon}-$neighborhood of the spectral cut of $R$, then the spectral cut of $Q$
is a circular arc near a straight line.
\end{theorem}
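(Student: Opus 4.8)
The plan is to work entirely with the $p=1$ (Ratio Cut) characterization recorded in the introduction: the $1$-spectral cut of $Q$ is the interface $\partial E \cap Q$ of a set $E$ minimizing
\[
J(E) = \frac{\mathcal{H}^{1}(\partial E \cap Q)}{\mathcal{H}^{2}(E)\,\mathcal{H}^{2}(Q\setminus E)}.
\]
For the reference rectangle $R$ with sides $a>b$ the minimizer is the vertical midline, with value $J = b/(ab/2)^2 = 4/(a^2 b)$, while a vertical cut displaced by $d$ from the centre has value $\tfrac{b}{b^2(a^2/4-d^2)} = \tfrac{4}{a^2b}\bigl(1 + \tfrac{4d^2}{a^2} + O(d^4)\bigr)$. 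The essential structural fact is that $J$ is \emph{quadratically flat} at its minimum, and it is precisely the balance of this quadratic penalty against the linear-in-$\varepsilon$ perturbation of the domain that produces the $\sqrt\varepsilon$ scale.

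First I would establish the localization claim (1) by a two-sided comparison. For the upper bound, the midline vertical cut of $Q$ is admissible, and since $d_{GH}(Q,R)\le \varepsilon b$ its length and the two areas differ from those of $R$ by a relative $O(\varepsilon)$, so the true minimizer $E^{\ast}$ satisfies $J(E^{\ast}) \le \tfrac{4}{a^2 b}(1+O(\varepsilon))$. For the matching lower bound I would argue that any near-optimal $E^{\ast}$ is forced to be \emph{balanced}: a small, looped, or corner-type component makes the product $\mathcal H^2(E)\mathcal H^2(Q\setminus E)$ small and hence $J$ large. A balanced, near-optimal interface must join the top and bottom boundary arcs (a left–right cut has length $\ge a(1-O(\varepsilon))>b$ and is excluded by the upper bound), so its vertical extent, and therefore its length, is at least $b(1-O(\varepsilon))$. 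Writing $d$ for the displacement determined by the area split, $\mathcal H^2(E^{\ast}) = b(a/2-d)(1+O(\varepsilon))$, this gives $J(E^{\ast}) \ge \tfrac{4}{a^2 b}(1-O(\varepsilon))\bigl(1 + \tfrac{4d^2}{a^2}\bigr)$. Comparing the two bounds forces $d^2 = O(\varepsilon a^2)$, i.e.\ $d = O(\sqrt\varepsilon)$ in units of the length scale, which is the asserted $\sqrt\varepsilon$-neighbourhood of the midpoint of the long axis.

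For the shape statement (2) I would exploit the variational structure. Because the two area factors are smooth, lower-order perturbations, $\partial E^{\ast}\cap Q$ is an almost-minimizer of perimeter and hence a smooth curve; its first variation under a normal perturbation $\psi$ (so that $\delta \mathcal H^1 = \int \kappa\,\psi$, $\delta \mathcal H^2(E) = \int \psi$, $\delta \mathcal H^2(Q\setminus E) = -\int \psi$) yields $\delta J = 0$ exactly when $\kappa = \mathcal H^1(\partial E^{\ast}\cap Q)\bigl(\tfrac{1}{\mathcal H^2(E^{\ast})} - \tfrac{1}{\mathcal H^2(Q\setminus E^{\ast})}\bigr)$, a \emph{constant}. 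A constant-curvature planar curve is a circular arc (a segment when $\kappa=0$), which is the claimed shape; moreover, by part (1) the areas are balanced up to $O(\sqrt\varepsilon)$, so $\kappa = O(\sqrt\varepsilon\,b/a^2)$ and the arc is close to a straight line. The role of the Lipschitz bound $L \le \sqrt2 - c(\varepsilon)$ is to control the free boundary: it should guarantee that this nearly vertical arc attaches to the perturbed top and bottom boundary graphs transversally, at the prescribed orthogonal angle, inside the controlled $\sqrt\varepsilon$-window, and it should exclude competitor interfaces that shortcut along a steep boundary portion.

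The main obstacle, I expect, is exactly this last point. The balanced-ness and length estimates of part (1) are robust, and the constant-curvature computation is formal; the delicate part is the free-boundary analysis with a merely Lipschitz (parabola-approximated) boundary — showing that the minimizing arc genuinely attaches to the two prescribed segments at the correct angle and is a graph over the short direction, and ruling out corner-cutting competitors. I anticipate that the threshold $\sqrt2$ emerges precisely from the comparison that makes a near-vertical orthogonal arc cheaper than any interface following a boundary of slope up to $L$; carrying out this competitor exclusion uniformly over the $O(\sqrt\varepsilon)$ family of admissible perturbations is where the real work lies.
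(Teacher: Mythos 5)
Your proposal follows the paper's architecture quite closely for the localization half, and diverges from it in technique for the shape half. For localization, the paper does exactly what you do: it plays the admissible near-midline cut (Remark 1, giving the upper bound $4h(1+2\varepsilon)$) against quadratic flatness of the area-product term, in the form of the elementary inequality $\frac{1}{x(1-x)} \geq 4 + 16\left(x-\frac{1}{2}\right)^2$ --- which is your expansion in the displacement $d$ --- and then uses the Lipschitz hypothesis to convert the resulting area balance $\big||A|-\frac12\big| \lesssim \sqrt{\varepsilon}$ into positional localization of the cut's endpoints. For the shape statement, however, the paper does \emph{not} argue via first variation and constant curvature: it fixes the area captured between the cut and its chord and minimizes length (Dido's problem), concluding from the isoperimetric inequality that the minimizer is a circular arc, and then quantifies closeness to a straight line through its Lemma 2, which bounds the bulge area by $\frac{1+c_\varepsilon}{\sqrt{3}}h^2\sqrt{\varepsilon}$. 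Your route --- almost-minimality of perimeter, regularity, and the Euler--Lagrange identity $\kappa = \mathcal{H}^1(\Gamma)\left(\frac{1}{|E|}-\frac{1}{|Q\setminus E|}\right)$ --- reaches the same conclusion and gives a cleaner quantitative meaning to ``near a straight line'' (a curvature bound of order $\sqrt{\varepsilon}$), but it silently leans on regularity theory for almost-minimizers to make sense of $\kappa$, which the paper's rearrangement argument avoids entirely.

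There is one genuine gap in your competitor analysis: your claim that a ``corner-type'' interface (one touching two adjacent sides) is excluded because it ``makes the product $\mathcal{H}^2(E)\mathcal{H}^2(Q\setminus E)$ small'' is false as stated --- a cut joining the bottom and a lateral side can enclose exactly half the area, so the product term does not penalize it at all. This is precisely the case the paper's Lemma 1 spends the most effort on: there the isoperimetric bound $\mathcal{H}^1(\Gamma) \geq \sqrt{\pi x a b}$ is combined with $\frac{1}{\sqrt{x}(1-x)} \geq \frac{3\sqrt{3}}{2}$ to show every adjacent-sides cut loses by the fixed factor $\frac{3\sqrt{3\pi}}{2} > 4$, uniformly in the area fraction $x$; you need to import that argument (your exclusion of left--right cuts by length is fine, modulo the aspect ratio being bounded away from $1$, which the paper also assumes). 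On the point you flag as the real difficulty --- the free-boundary role of the threshold $L \leq \sqrt{2}-c(\varepsilon)$ --- you should know that the paper carries this out in no more detail than you do: it is invoked in a single sentence of the proof, so your proposal is not behind the paper's own treatment there.
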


Numerical examples suggest that this result starts becoming applicable rather quickly: already a small number of cuts seems to suffice to produce roughly
rectangular shapes. As soon as Theorem 1 becomes applicable the spectral cuts will start being closer and closer to straight lines, which immediately implies that many shapes will end
up approaching quadrilaterals after several additional steps. The next result shows that as soon as we are dealing with quadrilaterals close to a rectangle, the procedure
is smoothing and produces quadrilaterals closer to the rectangle; this has to be understood in the usual sense of 'cuts' away from the boundary: a quadrilateral
having a $91^{\circ}$ degree angle will always pass this angle on to one of its descendants.  
We note a certain similarity of Theorem \ref{thm1} to the result of Grieser-Jerison \cite{GrieserJerison} for the standard Laplacian on rectangular domains of high aspect ratio with one side described by a curve.  See also the recent work \cite{BCM} where general curvilinear quadrilaterals were considered.  

\subsection{Near Curvilinear Quadrilaterals} We settle our conjecture in the case of $\Omega$ being near a quadrilateral that is close to a rectangle: we prove convergence to a rectangular shape in the Gromov-Hausdorff distance 'away from the boundary' (in the sense
discussed above: rectangles incorporating parts of the initial boundary or boundaries created by very early initial spectral cuts need never be regular).
We introduce a qualitative way of measuring distance to quadrilaterals as follows: given a curvilinear quadrilateral $Q$ with angles $\alpha, \beta, \gamma, \delta$ and sides described by the curves $y = \gamma_1 (x)$, $y = \gamma_2(x)$, $x = \gamma_3(y)$ and $x  = \gamma_4 (y)$, we
define the functional
\begin{equation}\label{Definition I(Q)}
I(Q) =  \left| \alpha- \frac{\pi}{2} \right| + \left| \beta- \frac{\pi}{2} \right| + \left| \gamma- \frac{\pi}{2} \right| + \left| \delta- \frac{\pi}{2} \right| + \max_{1 \leq j \leq 4} \sup_s (|\gamma_j' |(s)   + |\gamma_j'' |(s)+ |\gamma_j''' |(s)) .
\end{equation}
We particularly want our domains to be well approximated by parabolic curves on the sides intersecting the ratio cut, and will refer to such domains as {\it approximately parabolic curvilinear quadrilaterals}. 

\begin{remark}
This is actually quite a bit more restrictive than one needs, though is certainly sufficient; otherwise, the theorem becomes harder to frame as one must introduce a large number of cases for behaviors of each boundary component of the domain.  However, in the calculations below, we will work with domains that are perturbations of a base rectangle with a reasonable aspect ratio, and whose top and bottom boundary components can be well-approximated by parabolas in a neighborhood near the axis of symmetry of the base rectangle.  As a result, we can dramatically change regularity requirements for the left and right curves in our model calculations as long as the perturbations are nearly symmetric in area and bounded by a sufficiently small constant.  
\end{remark}

\begin{theorem} 
\label{thm2}
For an approximately parabolic curvilinear quadrilateral of aspect ratio near $1:2$, there exists $\varepsilon_1 > 0$ such that if $I(Q) \leq \varepsilon_1$, then the $1$-spectral cut induced
by the $L^1-$Laplacian is a circular arc with opening angle bounded above by $\varepsilon_1/8$.  
\end{theorem}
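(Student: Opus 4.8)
The plan is to bypass the nonlinear eigenvalue equation and work directly with the $p\to 1$ Neumann limit, i.e.\ with the Ratio Cut variational problem recorded in the introduction: the $1$-spectral cut of $Q$ is realized by a curve $\Gamma$ splitting $Q$ into $E$ and $Q\setminus E$ that minimizes
\[
J(\Gamma)\;=\;\frac{\mathcal H^1(\Gamma)}{\mathcal H^2(E)\,\mathcal H^2(Q\setminus E)}\;=\;\frac{L}{A_1A_2},
\]
where $L=\mathcal H^1(\Gamma)$, $A_1=\mathcal H^2(E)$, $A_2=\mathcal H^2(Q\setminus E)$. I would fix coordinates so that $Q$ is an $O(\varepsilon_1)$ perturbation of the base rectangle $R=[0,2]\times[0,1]$, with top and bottom sides given by near-parabolic graphs $y=1+\gamma_1(x)$ and $y=\gamma_2(x)$. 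Since $I(Q)\le\varepsilon_1$ is a sum of nonnegative terms, each piece is individually $\le\varepsilon_1$: every corner angle lies within $\varepsilon_1$ of $\pi/2$, and $\max_j\sup_s\big(|\gamma_j'|+|\gamma_j''|+|\gamma_j'''|\big)\le\varepsilon_1$.

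First I would establish the structural half of the conclusion (that the cut is a circular arc) through the first variation of $J$. A normal perturbation of $\Gamma$ with velocity $\psi$ gives $\dot L=\int_\Gamma\kappa\,\psi\,ds$ and $\dot A_1=-\dot A_2=\int_\Gamma\psi\,ds$, so stationarity of $J$ forces the pointwise Euler--Lagrange identity
\[
\kappa\;=\;\frac{L\,(A_2-A_1)}{A_1A_2}\qquad\text{on }\Gamma.
\]
The right-hand side is constant, so a minimizing cut has constant curvature and is therefore a circular arc, degenerating to a straight segment exactly when the cut bisects the area. Applying the same variation at the free endpoints on $\partial Q$ yields the natural (transversality) boundary condition that $\Gamma$ meets $\partial Q$ orthogonally; this is the $p\to1$ analogue of Neumann nodal lines being orthogonal to the boundary, and may equally be imported from the proof of Theorem \ref{thm1}.

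Next I would localize the minimizer. Because $Q$ is $O(\varepsilon_1)$-close to $R=[0,2]\times[0,1]$, I would compare the value of $J$ on the near-vertical short cut (for which $J\approx 4/(a^2b)=1$) against the competing horizontal long cut ($J\approx 4/(ab^2)=2$) and against corner cuts, whose values are bounded away from $1$; since these values are separated and stable under $O(\varepsilon_1)$ perturbation of the domain, the minimizer must be the short cut running from the bottom curve to the top curve near the symmetry axis $x=1$. A continuity and comparison argument then shows the optimal cut is nearly area-bisecting, so $|A_2-A_1|$ is of the order of the asymmetry of $Q$ about the cut axis, which the near-parabolicity and the $C^3$ bounds in $I(Q)$ control.

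Finally I would convert the curvature identity into the opening-angle bound. Writing the opening angle as $\theta=\kappa L$, the Euler--Lagrange identity gives $\theta=L^2(A_2-A_1)/(A_1A_2)$; equivalently, by the orthogonality condition $\theta$ equals the net turning of the unit tangent along $\Gamma$, namely the difference $\gamma_1'(x_{\mathrm{top}})-\gamma_2'(x_{\mathrm{bot}})$ of the boundary slopes at the two endpoints. Both expressions are controlled by $I(Q)$: with $L\approx1$ and $A_1,A_2\approx1$, and with the endpoint slopes small since the cut sits near the vertices of the near-parabolic top and bottom curves, the constant-tracking estimates gathered in the appendix yield $\theta\le\varepsilon_1/8$. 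I expect the main obstacle to be exactly this localization-plus-constant step: ruling out the alternative cut topologies and proving near-bisection require quantitative comparison inequalities that must survive the $O(\varepsilon_1)$ perturbation uniformly, and extracting the sharp factor $1/8$ (rather than a generic $O(\varepsilon_1)$ bound) forces one to carry the second- and third-derivative terms of $\gamma_1,\gamma_2$ through the area and length expansions. A secondary technical point is establishing existence and enough regularity of the minimizing cut to justify the first variation and the transversality condition.
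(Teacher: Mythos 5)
Your structural step is correct, and it takes a genuinely different route from the paper: you derive constant curvature from the first variation, obtaining the Euler--Lagrange identity $\kappa = L(A_2-A_1)/(A_1A_2)$, whereas the paper imports the circular-arc property from the isoperimetric argument in the proof of Theorem \ref{thm1}. Your transversality observation is also a genuinely nice shortcut: orthogonality at the free endpoints forces the opening angle to equal (to leading order) the difference of the boundary slopes at the two endpoints, and this reproduces exactly the leading-order answer $\theta \approx a_1 - a_2 + a_3$ that the paper only obtains after a lengthy second-order Taylor expansion of the Ratio Cut functional combined with the Implicit Function Theorem. Had you stopped at ``$\theta$ is $O(I(Q))$,'' the argument would be a legitimately more elementary alternative for a qualitative version of the theorem.

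The gap is in the quantitative half, which is where essentially all of the paper's work lies. Your two identities --- $\theta = L^2(A_2-A_1)/(A_1A_2)$ and $\theta \approx \gamma_T'(p)-\gamma_B'(q)$ --- are necessary conditions at the optimum that couple $\theta$ to quantities ($A_2-A_1$, the endpoint locations $p,q$) which are themselves unknowns of the optimization; they do not output a bound until the coupled system is actually solved. The paper solves it by expanding $RC(p,q,\theta;\sigma)$ to second order in the cut variables and in all domain parameters $\sigma=(a_1,a_2,a_3,\epsilon_t,\epsilon_b,A_{\texttt{WL}},A_{\texttt{WR}})$, verifying that the $3\times 3$ Jacobian at $(\tfrac12,\tfrac12,0;\vec 0)$ is nonsingular, and invoking the Implicit Function Theorem to read off the critical point as an explicit linear-plus-higher-order function of $\sigma$; the constant $1/8$ then comes out of this expansion as the sagitta coefficient (bulge $\approx \tfrac{\theta}{8}\times$ chord), and the sharpness claim depends on having those explicit coefficients. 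Moreover, the heuristic you use to reach the constant --- that ``the endpoint slopes are small since the cut sits near the vertices of the near-parabolic top and bottom curves'' --- fails in the simplest test case the paper computes: for the flat trapezoid whose top side has slope $a$ (and zero curvature), the slope equals $a$ at \emph{every} point, one has $I(Q)\approx 3a$, and both the paper's computation and your own orthogonality identity give $\theta \approx a$, i.e.\ comparable to $I(Q)$ rather than suppressed by any vertex effect. So deferring the bound to ``constant-tracking estimates gathered in the appendix'' skips precisely the step that constitutes the proof; completing your approach would require carrying out the perturbative solution of the Euler--Lagrange system (or equivalently the paper's expansion-plus-IFT computation in Sections \ref{sec:pf3}--5 and the Appendix) and then relating $\theta$ to the arc's deviation from its chord to see where the factor $1/8$ enters.
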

The proof also shows that the constant $1/8$ cannot be further improved.
This result implies that near-quadrilateral regions have $1$-spectral cuts that are closer to a quadrilateral in Gromov-Hausdorff distance. This implies exponentially fast convergence to rectangles in shape. The way the result is obtained actually allows for a fairly precise understanding of what happens (in particular, it can be used to show that there is a choice of signs such that $I(Q_n)$ grows substantially along the subsequence). We refer to the proof for details.

\subsection{Open problems} These results naturally suggest several open problems; we only name a few that seem 
particularly natural.

\begin{enumerate}

\item Prove the main conjecture for $p=1$; that is, show that a generic $\Omega_0 \subset \mathbb{R}^2$ will produce domains $\Omega_n$ for which Theorem 1 can be applied, and show that the dynamical system on the space of rectangles produces a quadrilateral sufficiently close to the set of rectangles for Theorem 2 to become applicable. Is it possible to transfer some of the arguments to the range $p \in (1,1+\varepsilon_0)$?  What happens
as $p \rightarrow \infty$? 
\item What happens in higher dimensions, or even to domains with curvature, like manifolds? One would still expect the cut to have a smoothing effect but the types of geometric obstruction that
one could encounter in the process may be more complicated. Is the generic limit given by a rectangular box? If not, is there a finite set of shapes
that can arise in the limit?
\item It seems natural to expect that similar results should hold true under Dirichlet conditions. This would require the study of the variational problem 
$$ \inf_{E \subset \Omega}{  \frac{ \mathcal{H}^{n-1}(\partial E)}{\mathcal{H}^{n}(E)}}.$$
Can the results be extended to this case?
\item Experimentally, we observe that the nodal line is (generically) fairly stable under small perturbations of the value of $p$. Can any result in this
direction be made precise? Does it help to assume  the domain $\Omega$ to be convex?
\end{enumerate}

\section{proof of theorem 1}
\label{sec:pf1}

\subsection{Preliminaries.} The crucial ingredient in our approach is that the $p$-Laplacian degenerates as $p \rightarrow 1^+$. The minimum of the associated energy functional characterizing the ground state is not assumed by any continuous
function: reinterpreting the functional in terms of total variation, the extremal function is constant on two sets in the domain that are separated by
an $(n-1)-$dimensional hypersurface. More precisely, we have the following consequence of the coarea formula (see \cite{stein} for further implications
of this result).

\begin{thm}[Cianchi, \cite{Cianchi}] Let $\Omega \subset \mathbb{R}^{n}$ be open, bounded and connected. Then we have the sharp Poincar\'e-type inequality for any sufficiently smooth functions $u$:
$$\left\|u-\frac{1}{|\Omega|}\int_{\Omega}{u(z)dz}\right\|_{L^{1}(\Omega)} \leq
 \left(\sup_{\Gamma \subset \Omega}{\frac{2}{\mathcal{H}^{n-1}(\Gamma)}\frac{|S||\Omega \setminus S|}{|\Omega|}}\right)\left\|\nabla u\right\|_{L^{1}(\Omega)},$$
where $\Gamma \subset \Omega$ ranges over all surfaces which divide $\Omega$ into two connected open subsets $S$ and $\Omega \setminus \overline S$. 
\end{thm}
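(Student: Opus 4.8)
The plan is to derive the inequality from the coarea formula together with a relative isoperimetric bound on the super-level sets of $u$, and then to convert the resulting level-set integral into $\|u-\bar u\|_{L^1}$ via a symmetric double integral. Throughout write $\bar u = \frac{1}{|\Omega|}\int_\Omega u$, $E_t = \{u > t\}$, $\mu(t) = |E_t|$, and let $C_\Omega$ denote the supremum appearing on the right-hand side. It suffices to prove $\|\nabla u\|_{L^1(\Omega)} \ge C_\Omega^{-1}\,\|u-\bar u\|_{L^1(\Omega)}$.

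First I would apply the coarea formula to write $\|\nabla u\|_{L^1(\Omega)} = \int_{\mathbb{R}} \mathcal{H}^{n-1}(\{u = t\}\cap\Omega)\,dt$, noting that for almost every $t$ the level set $\{u=t\}\cap\Omega$ is the relative boundary of $E_t$. By the very definition of $C_\Omega$, any surface $\Gamma$ splitting $\Omega$ into $S$ and $\Omega\setminus\overline S$ satisfies $\mathcal{H}^{n-1}(\Gamma) \ge \frac{2}{C_\Omega|\Omega|}\,|S|\,|\Omega\setminus S|$; applied to $\Gamma = \partial E_t\cap\Omega$ this gives $\mathcal{H}^{n-1}(\{u=t\}\cap\Omega)\ge \frac{2}{C_\Omega|\Omega|}\,\mu(t)\,(|\Omega|-\mu(t))$. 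Integrating in $t$ yields
\[
\|\nabla u\|_{L^1(\Omega)} \ge \frac{2}{C_\Omega|\Omega|}\int_{\mathbb{R}} \mu(t)\,(|\Omega|-\mu(t))\,dt.
\]
The final step is the elementary identity, obtained from Fubini's theorem after writing $|u(x)-u(y)| = \int_{\mathbb{R}} \bigl(\mathbf{1}[\,u(x)\le t < u(y)\,] + \mathbf{1}[\,u(y) \le t < u(x)\,]\bigr)\,dt$,
\[
\int_\Omega\int_\Omega |u(x)-u(y)|\,dx\,dy = 2\int_{\mathbb{R}} \mu(t)\,(|\Omega|-\mu(t))\,dt,
\]
combined with the triangle inequality $\int_\Omega\int_\Omega |u(x)-u(y)|\,dy\,dx \ge \int_\Omega \bigl|\int_\Omega (u(x)-u(y))\,dy\bigr|\,dx = |\Omega|\,\|u-\bar u\|_{L^1(\Omega)}$. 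Chaining these with the displayed bound produces $\|\nabla u\|_{L^1(\Omega)} \ge C_\Omega^{-1}\|u-\bar u\|_{L^1(\Omega)}$, as desired. Sharpness follows by testing against smooth approximations of the indicator $\mathbf{1}_S$ for a near-optimal cut $\Gamma$: for such functions $\|\nabla u\|_{L^1}\to\mathcal{H}^{n-1}(\Gamma)$ while $\|u-\bar u\|_{L^1}\to 2|S|\,|\Omega\setminus S|/|\Omega|$, so the ratio approaches $C_\Omega$.

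The step I expect to be the genuine obstacle is the isoperimetric bound on the level sets, because $C_\Omega$ is defined as a supremum over cuts $\Gamma$ dividing $\Omega$ into exactly \emph{two connected} pieces, whereas a generic super-level set $E_t$ and its complement may each have several connected components. One must therefore justify that $\mathcal{H}^{n-1}(\partial E\cap\Omega)\ge \frac{2}{C_\Omega|\Omega|}|E|(|\Omega|-|E|)$ persists for disconnected $E$. This follows from a superadditivity observation: if $E = \bigsqcup_i E_i$ with the $E_i$ mutually non-adjacent, then $\sum_i |E_i|(|\Omega|-|E_i|) \ge |E|(|\Omega|-|E|)$, the difference being $2\sum_{i<j}|E_i|\,|E_j|\ge 0$, so decomposing into connected constituents only strengthens the estimate. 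Making this reduction fully rigorous — accounting for the bipartite adjacency structure of the components of $\{u>t\}$ and $\{u<t\}$ so that each elementary cut genuinely separates $\Omega$ into two connected sets — is the one place where care is needed; everything else is coarea, Fubini, and the triangle inequality.
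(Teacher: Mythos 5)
The paper does not actually prove this statement: it is quoted, with attribution, from Cianchi \cite{Cianchi} (see also \cite{stein}), and the only computation the paper supplies afterwards is the \emph{attainment} direction, namely that two-valued functions $a\chi_S + b\chi_{\Omega\setminus S}$ (interpreted in the total-variation relaxation) realize the constant --- which is exactly the content of your closing sharpness paragraph. So your proposal must be judged against the standard proof of the inequality itself, and your route is indeed that standard route: the coarea formula, a relative isoperimetric bound on the level sets, the Fubini identity $\int_\Omega\int_\Omega |u(x)-u(y)|\,dx\,dy = 2\int_{\mathbb{R}}\mu(t)(|\Omega|-\mu(t))\,dt$, and the triangle-inequality step $\int_\Omega\int_\Omega|u(x)-u(y)|\,dy\,dx \ge |\Omega|\,\|u-\bar u\|_{L^1}$ are all correct, and the fact that the triangle-inequality step is an \emph{equality} for indicator functions confirms that no constant is lost anywhere in the chain.

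The one genuine gap is the one you flagged, and your proposed fix does not yet close it. Superadditivity reduces the level-set bound to the corresponding bound for each connected component $E_i$ of $E_t$, but to invoke the definition of $C_\Omega$ for $E_i$ you need the pair $(E_i,\Omega\setminus\overline{E_i})$ to be an admissible cut, i.e.\ you need $\Omega\setminus\overline{E_i}$ to be connected as well --- and this can fail: take $\Omega$ a disk and $u$ a radial bump whose super-level set $E_i$ is an annulus, so that $\Omega\setminus\overline{E_i}$ has two components. No regrouping of the components of $\{u>t\}$ alone repairs this, because the obstruction lives on the complement side. The repair is a second application of your own trick to the complement: for a connected component $E_i$ and a regular value $t$, let $F_{i1},F_{i2},\dots$ be the connected components of $\Omega\setminus\overline{E_i}$, and observe that $\partial E_i\cap\Omega$ is the disjoint union of the sets $\partial F_{ij}\cap\Omega$. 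Each pair $(F_{ij},\,\Omega\setminus\overline{F_{ij}})$ \emph{is} admissible: $F_{ij}$ is connected by definition, and $\Omega\setminus\overline{F_{ij}}$ is connected because every other piece $F_{ik}$, $k\ne j$, attaches through its relative boundary to the connected set $E_i$. Hence $\mathcal{H}^{n-1}(\partial F_{ij}\cap\Omega)\ge \frac{2}{C_\Omega|\Omega|}|F_{ij}|\,(|\Omega|-|F_{ij}|)$ for every $j$; summing over $j$ and applying superadditivity once more (using $\sum_j |F_{ij}| = |\Omega|-|E_i|$) gives $\mathcal{H}^{n-1}(\partial E_i\cap\Omega)\ge \frac{2}{C_\Omega|\Omega|}|E_i|\,(|\Omega|-|E_i|)$ with no connectivity hypothesis on the complement. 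Summing over $i$ --- each component of the level set bounds exactly one $E_i$, so nothing is double counted --- yields the per-level-set inequality for a.e.\ $t$ (Sard's theorem supplies the regular values), after which the rest of your argument is complete as written.
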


This means that the nodal line $\Gamma$ defining the $1$-spectral cut is a hypersurface partitioning $\Omega$ into two sets $S, \Omega \setminus \overline S$
$$ \mbox{so as to minimize the quantity} \qquad \frac{\mathcal{H}^1(\Gamma) |\Omega|}{|S| |\Omega \setminus S|}.$$
It is relatively easy to check that this value is assumed if we relax the $L^1$-norm of the gradient and re-interpret it as total variation. Let us assume that $f(x) = a \chi_{S} + b \chi_{\Omega \setminus S}$.
We want $f$ to have mean value 0, which leads to
$$ a |S| + b |\Omega \setminus S| = 0 \quad \mbox{and thus} \quad b = -a\frac{|S|}{|\Omega \setminus S|} \quad \mbox{implying} \quad \|f\|_{L^1} = 2a |S|,$$
where we take $a>0$ without loss of generality.
Moreover, the ``formal'' contribution to the total variation interpretation of the gradient is given by
$$ \|\nabla f \|_{L^1} = |\Gamma| (a - b) =|\Gamma|  \left(a + a\frac{|S|}{|\Omega \setminus S|} \right).$$
This implies
$$ \frac{  \|\nabla f \|_{L^1}  }{ \|f\|_{L^1}} =  \frac12 |\Gamma|  \left( \frac{1}{|S|} + \frac{1}{|\Omega\setminus S|}\right) =  \frac12 |\Gamma| \frac{|\Omega|}{|S| |\Omega \setminus S|}.$$
To be rigorous, convolution with a smooth mollifier shows that one can get arbitrarily close to the optimal constant with smooth functions. We will now show that, for a rectangle, the optimal spectral cut splits the domain via a straight line cut intersecting the longest sides at right angles.  The crucial ingredient here is that the argument does not appeal to symmetry,
is stable under perturbations and easily implies the same results for domains that are merely close to rectangles in the Gromov-Hausdorff distance.

\begin{lemma} \label{Lemma: rectangle result}
Let $R = [0,a] \times [0,b] \subset \mathbb{R}^2$  with $a > b$. The $1$-spectral cut is exactly at $\left\{a/2\right\} \times [0,b]$.
\end{lemma}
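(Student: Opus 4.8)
The plan is to argue directly from the variational characterization just derived: by the computation following Cianchi's theorem, the $1$-spectral cut is the curve $\Gamma$ splitting $R$ into $S$ and $R\setminus S$ that minimizes
\[
\Phi(\Gamma) \;=\; \frac{\mathcal{H}^1(\Gamma)\,|R|}{|S|\,|R\setminus S|}\;=\;\frac{ab\,\mathcal{H}^1(\Gamma)}{t(ab-t)},\qquad t:=|S|,
\]
where we may take $t\le ab/2$ since $\Phi$ is symmetric under $S\leftrightarrow R\setminus S$. First I would evaluate $\Phi$ on the candidate $\Gamma_0=\{a/2\}\times[0,b]$, for which $\mathcal{H}^1(\Gamma_0)=b$ and $t=ab/2$, giving $\Phi(\Gamma_0)=4/a$. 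The lemma is then equivalent to showing $\Phi(\Gamma)\ge 4/a$ for every competitor, with equality forcing $\Gamma=\Gamma_0$. After clearing denominators this is the pointwise lower bound
\[
\mathcal{H}^1(\Gamma)\;\ge\; g(t):=\frac{4\,t(ab-t)}{a^2 b},\qquad 0<t\le ab/2,
\]
and since $g(t)\le b$ with equality exactly at $t=ab/2$ (by AM--GM applied to $t(ab-t)\le(ab)^2/4$), the entire problem is to bound $\mathcal{H}^1(\Gamma)$ from below in terms of the area split $t$.

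The geometric heart of the argument is a dichotomy for the cut, phrased without any appeal to the symmetry of $R$. Either (a) some connected component of $\partial^*S\cap R$ joins the two long sides of $R$, in which case that component alone has length at least the distance $b$ between them, so $\mathcal{H}^1(\Gamma)\ge b\ge g(t)$; or (b) no component joins the two long sides. In case (b), each component either joins the two short sides (length $\ge a>b$) or, together with part of $\partial R$, bounds a region sitting against a single side, in a corner, or in the interior. For such a piece of area $\tau$, reflecting its free boundary across the one or two walls it touches produces a closed planar curve enclosing area $\tau$, $2\tau$, or $4\tau$; in every configuration the planar isoperimetric inequality yields a free-boundary length of at least $\sqrt{\pi\tau}$, the corner (quarter-disk) case being the weakest. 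Summing over components and using $\sum_i\sqrt{\tau_i}\ge\sqrt{\sum_i\tau_i}$ (concavity of $\sqrt{\cdot}$), this gives $\mathcal{H}^1(\Gamma)\ge\sqrt{\pi t}$ throughout case (b).

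To close case (b) I would insert $\mathcal{H}^1(\Gamma)\ge\sqrt{\pi t}$ into $\Phi$ and maximize $\sqrt{t}\,(ab-t)$ over $t\in(0,ab/2]$; the maximum occurs at $t=ab/3$, and the resulting estimate collapses to the purely numerical inequality $\sqrt{\pi}\ge \tfrac{8}{3\sqrt{3}}\sqrt{b/a}$, which holds strictly because $a>b$ and $\tfrac{8}{3\sqrt3}\approx1.54<\sqrt{\pi}\approx1.77$. Thus case (b) always produces $\Phi(\Gamma)>4/a$, so the minimum can only be attained in case (a). Finally, equality $\Phi=4/a$ in case (a) forces simultaneously $\mathcal{H}^1(\Gamma)=b$ and $t=ab/2$; but a component joining the two long sides already has length $\ge b$, so $\Gamma$ must be exactly that single component of length $b$, hence the unique shortest connection, a straight vertical segment, and the bisection condition $t=ab/2$ places it at $x=a/2$. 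This identifies the minimizer as $\{a/2\}\times[0,b]$ and, pleasantly, the argument never used a reflection symmetry of the domain, so it is stable under the perturbations of Assumption \ref{rectangle_assumptions}.

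The step I expect to be the main obstacle is making the dichotomy of the second paragraph rigorous for an \emph{arbitrary} competitor, i.e.\ a general set $S$ of finite perimeter whose reduced boundary $\partial^*S\cap R$ need not be a single smooth arc. This is exactly a sharp relative isoperimetric inequality in the rectangle: the reflection argument supplies the correct local constants, but the bookkeeping of decomposing $\partial^*S\cap R$ into components, deciding which walls each touches, and controlling the interaction with $\partial R$ is where the care is required. I would discharge it either by citing the relative isoperimetric profile $P(t)=\min\{b,\sqrt{\pi t}\}$ of the rectangle for $t\le ab/2$, or by approximating $S$ by sets with polygonal boundary and handling the components one at a time; crucially, only the weakest reflection bound (the corner constant $\sqrt\pi$) is needed in case (b), and the strict margin in the numerical inequality above makes that step robust.
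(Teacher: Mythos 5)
Your proposal is correct and follows essentially the same route as the paper's proof: the same variational reduction to minimizing $\mathcal{H}^1(\Gamma)\,|R|/(|S|\,|R\setminus S|)$, the same dichotomy between cuts joining opposite sides (length $\geq b$, the AM--GM area bound, and the same equality analysis forcing a straight bisecting segment) and cuts bounding corner or side regions (the reflection--isoperimetric bound $\sqrt{\pi\tau}$ with the corner case as the weakest constant, then optimizing $\sqrt{t}\,(ab-t)$ to obtain the margin $3\sqrt{3\pi}/2 \approx 4.6 > 4$), with identical constants throughout. One small quibble: your closing claim that $\mathcal{H}^1(\Gamma)\ge\sqrt{\pi t}$ holds ``throughout case (b)'' is not literally true for a component joining the two short sides, but since you yourself note such a component has length $\geq a > b$ and is therefore handled exactly as in case (a), the argument is unaffected; your extra care with multiple boundary components and sets of finite perimeter is a refinement of, not a departure from, the paper's argument, which silently assumes a single arc.
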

\begin{proof}
The cut in the middle of the longer side yields
$$ \frac{\mathcal{H}^1(\Gamma) |\Omega|}{|S| |\Omega \setminus S|} = \frac{b (ab)}{(ab)^2/4} = \frac{4}{a}.$$
It is clear that other spectral cuts touching the longer sides have $ \frac{\mathcal{H}^1(\Gamma) |\Omega|}{|S| |\Omega \setminus S|} \geq \frac{4}{a},$ since $\mathcal{H}^1(\Gamma)\geq b$ and $|S| |\Omega \setminus S|\leq (ab)^2/4$. 
\begin{center}
\begin{figure}[ht!]  
\begin{tikzpicture}[scale = 2]
\draw [ultra thick] (0,0) -- (1.61,0) -- (1.61, 1) -- (0, 1) -- (0,0);
\draw [dashed] (0.8,0) -- (0.8, 1);
\draw [thick] (0.7,0) to[out=19,in=140] (1.61, 0.6);
\node at (1.5,0.2) {$S$};
\node at (0.3,0.6) {$\Omega \setminus S$};
\end{tikzpicture}
\caption{The geometric construction in the proof of Lemma 1. }
\label{fig4}
\end{figure}
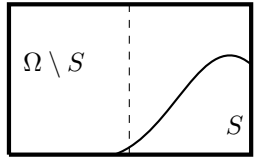
\end{center}

If $\Gamma$ touches two opposite sides, then
$$ \frac{\mathcal{H}^1(\Gamma) |\Omega|}{|S| |\Omega \setminus S|}  \geq b \frac{ |\Omega|}{|S| |\Omega \setminus S|} \geq \frac{b (ab)}{(ab)^2} \min_{0 \leq x \leq 1}{\frac{1}{x(1-x)}} \geq \frac{4}{a}.$$
Equality can only arise if the cut has length $b$ (forcing it to be a line) and $x=1/2$ (forcing a split into two domains of the same area). This characterizes
exactly the cut in the middle.

It remains to deal with the case where the spectral cut touches two adjacent sides; this case is illustrated in Figure \ref{fig4}. Let us assume that the enclosed domain is denoted by $S$, $|S| = x |\Omega| = x ab$ for some $0 < x < 1$, and $\Gamma = \partial S \cap \mbox{int}~R$ is the part of the boundary curve strictly inside the rectangle.
The isoperimetric inequality implies that
$$ \mathcal{H}^1(\Gamma) \geq  \sqrt{\pi}\sqrt{x a b},$$
and thus
$$ \frac{\mathcal{H}^1(\Gamma) |\Omega|}{|S| |\Omega \setminus S|} \geq \frac{ \sqrt{\pi x a b} ab }{a^2 b^2 x (1-x)} = \frac{\sqrt{\pi}}{\sqrt{x}(1-x)} \frac{1}{\sqrt{a}\sqrt{b}}.$$
An explicit computation shows that for all $0 < x < 1$
$$ \frac{1}{\sqrt{x}(1-x)}  \geq \frac{3\sqrt{3}}{2}$$
and therefore, using $b \geq a,$
$$ \frac{\mathcal{H}^1(\Gamma) |\Omega|}{|S| |\Omega \setminus S|} \geq \frac{3\sqrt{3 \pi}}{2} \frac{1}{a} \geq \frac{4.6}{a}.$$
This is always a constant fraction worse than the cut along the longest axes, concluding the argument.
\end{proof}

\begin{remark}
\label{rmk1}
It is easily seen that all aspects of the argument are stable under (even moderately large) perturbations of the domain in the sense of Gromov-Hausdorff distance.  To be specific, take a domain $Q$ satisfying Assumption \ref{rectangle_assumptions}, and denote the length of the shorter side of the associated rectangle $R$ by $h$. By Assumption \ref{rectangle_assumptions}, the Gromov-Hausdorff distance between $Q$
and $R$ is $d_{GH}(Q, R) \leq \varepsilon h$. There is clearly a straight line that is parallel to the shorter side of $R$ and bisects the area: the
bound on the Gromov-Hausdorff distance implies that this straight line has length at most $h + 2h\varepsilon$. Therefore,
$$   \inf_{\Gamma}{\frac{\mathcal{H}^1(\Gamma)}{|S| |\Omega \setminus S|}} \leq 4h + 8h \varepsilon.$$
Arguments as in Lemma 1 show that it has to cut the longer side roughly in the middle. 
\end{remark}

The second ingredient is a straightforward consequence of the isoperimetric inequality that we need to make quantitative.
\begin{center}
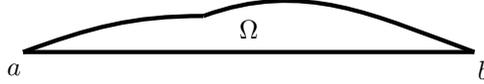
\begin{figure}[ht!] 
\begin{tikzpicture}[scale = 6]
\draw [ultra thick] (0,0) -- (1,0);
\node at (-0.02, -0.04) {$a$};
\node at (1.02, -0.04) {$b$};
\node at (0.5, 0.05) {$\Omega$};
\draw [ultra thick] (0,0) to[out=20,in=180] (0.4, 0.08);
\draw [ultra thick] (0.4,0.08) to[out=20,in=160] (1, 0);
\end{tikzpicture}
\caption{A curve slightly longer than the straight line.}
\label{fig:areagain}
\end{figure}
\end{center}

\begin{lemma} Consider a simply connected domain $\Omega$ as shown in Figure \ref{fig:areagain} that is comprised of a straight line segment between $a$ and $b$ and some arbitrary curved line segment enclosing a domain $\Omega$. For every $\varepsilon_0 >0$
there exists an $\varepsilon_1 > 0$ such that if
\begin{equation}
\label{assump1}
|\partial \Omega|  \leq  \left(1+\varepsilon_1\right) 2\|a - b\|,
\end{equation}
then
$$ |\Omega| \leq  \frac{1 + \varepsilon_0}{\sqrt{6}} \| a - b\|^{3/2} \sqrt{|\partial \Omega| - 2 \|a-b\|}.$$
\end{lemma}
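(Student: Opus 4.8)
The plan is to reduce to the sharp extremal configuration and then carry out an elementary asymptotic analysis. Write $\ell = \|a-b\|$ for the length of the chord and $L$ for the length of the curved arc, so that $|\partial\Omega| = \ell + L$ and the quantity under the square root on the right is the \emph{excess length} $e := |\partial\Omega| - 2\ell = L - \ell \ge 0$. Then \eqref{assump1} says exactly that $e \le 2\varepsilon_1 \ell$, i.e.\ the arc is only slightly longer than the chord. The first step is to invoke the classical solution of the isoperimetric problem with fixed endpoints: among all rectifiable curves joining $a$ and $b$ of prescribed length $L$, the area enclosed together with the chord $\overline{ab}$ is maximized by a circular arc (the Euler--Lagrange equation of the constrained area functional forces constant curvature). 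Hence $|\Omega| \le A_{\mathrm{circ}}(\ell,L)$, where $A_{\mathrm{circ}}$ is the area of the circular segment with chord $\ell$ and arc length $L$, and it suffices to bound $A_{\mathrm{circ}}$.

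Second, I would parametrize the circular segment by the half-angle $\theta \in (0,\pi)$ subtended at the center, with radius $r$, so that $\ell = 2r\sin\theta$, $L = 2r\theta$, $e = 2r(\theta - \sin\theta)$, and $A_{\mathrm{circ}} = r^2\!\left(\theta - \tfrac12 \sin 2\theta\right)$. The key observation is that the resulting ratio is scale invariant (the radius cancels):
\[
\frac{A_{\mathrm{circ}}}{\ell^{3/2}\sqrt{e}} \;=\; \frac{\theta - \tfrac12\sin 2\theta}{4\,(\sin\theta)^{3/2}\,(\theta-\sin\theta)^{1/2}} \;=:\; F(\theta).
\]
A Taylor expansion as $\theta \to 0^{+}$, using $\theta - \tfrac12\sin 2\theta = \tfrac23\theta^3 + O(\theta^5)$, $\sin\theta = \theta + O(\theta^3)$, and $\theta - \sin\theta = \tfrac16\theta^3 + O(\theta^5)$, yields $F(\theta)\to \tfrac{1}{\sqrt 6}$. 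This is precisely the sharp leading constant, confirming that the inequality is asymptotically an equality attained in the thin limit by the circular arc.

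Third, I would close the argument by a continuity selection of the constants. Since $F$ is continuous on $(0,\pi)$ with $\lim_{\theta\to 0^{+}}F(\theta)=1/\sqrt6$, for any $\varepsilon_0>0$ there is $\delta>0$ with $F(\theta)\le (1+\varepsilon_0)/\sqrt6$ for all $0<\theta\le\delta$. The map $\theta\mapsto e/\ell = (\theta-\sin\theta)/\sin\theta$ is continuous, increasing, and vanishes at $0$, so choosing $\varepsilon_1$ small enough that the hypothesis $e/\ell\le 2\varepsilon_1$ forces $\theta\le\delta$ makes the chain $|\Omega|\le A_{\mathrm{circ}} = F(\theta)\,\ell^{3/2}\sqrt e \le \tfrac{1+\varepsilon_0}{\sqrt6}\,\ell^{3/2}\sqrt e$ valid; here I use that the comparison arc shares the chord $\ell$ and the excess $e$ of $\Omega$.

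The step I expect to be the main obstacle is the rigorous reduction to the circular arc for \emph{arbitrary} rectifiable boundary curves, rather than graphs over the chord. For curves lying on one side of the line through $a$ and $b$ this is clean: replacing the arc by the upper boundary of the convex hull of $\overline\Omega$ does not decrease $|\Omega|$ and does not increase the arc length, so one may assume the arc is convex, hence a graph $y=h(x)$ over $\overline{ab}$; for such graphs the constrained maximization of $\int_0^\ell h\,dx$ at fixed $\int_0^\ell(\sqrt{1+h'^2}-1)\,dx$ has the circular arc as its unique critical point. Curves that cross the line through $a$ and $b$ enclose no more area for the same excess, so the bound persists. Once this reduction is secured, the expansion of $F(\theta)$ and the monotone dependence $\varepsilon_1(\varepsilon_0)$ are routine.
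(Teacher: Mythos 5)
Your proposal is correct and follows essentially the same route as the paper: reduce to the extremal circular arc via the isoperimetric inequality, then perform a small-angle asymptotic analysis of the circular segment to extract the sharp constant $1/\sqrt{6}$. Your scale-invariant ratio $F(\theta)$ with a continuity/monotonicity selection of $\varepsilon_1$ is a somewhat cleaner packaging of the paper's expansion in $\|a-b\|/r$ (and your convex-hull remark treats the reduction step more carefully than the paper does), but the underlying argument is the same.
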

\begin{proof} The isoperimetric inequality immediately implies that the optimal curve defining the Ratio Cut for $Q$ satisfying Assumption \ref{rectangle_assumptions} must take the form of a circular arc; otherwise, we could fix $a,b$ at the boundary of a circle
and use the novel shape to create a set in the plane that contains more area than the disk whose boundary has the same size. 

 It remains to compute the constants for the circular arc.
Choose a coordinate system with $a,b$ at $(0,0),(\|a-b\|,0)$ respectively. The opening angle $\alpha$ of the sector of the disk with radius $r>0$ that produces such an arc is given by
$$ \alpha = 2 \arcsin{\left( \frac{\|a-b\|}{2r} \right)},$$
where $0 < \alpha < \frac{\pi}{2}$ is sufficiently small by assumption \eqref{assump1}.
As a consequence, the length of the circular arc is given by
$$ r \alpha =  2r \arcsin{\left( \frac{\|a-b\|}{2r}\right)} \geq \|a-b\| + \frac{\|a-b\|^3}{24} \frac{1}{r^2},$$
and so
\begin{equation}
\label{pomegabd}
 |\partial \Omega| \geq 2\|a-b\| +   \frac{\|a-b\|^3}{24} \frac{1}{r^2}.
 \end{equation}
In particular, by assumption \eqref{assump1}, we have that 
$$  \frac{\|a-b\|^3}{24} \frac{1}{r^2} \leq  2\varepsilon_1\|a-b\| \qquad \mbox{and thus} \qquad \frac{\|a-b\|^2}{r^2}  \leq 48 \varepsilon_1.$$

The enclosed area captured by the line segment and the circular arc is 
$$\frac{ r^2 \pi \alpha}{2\pi} - r^2 \cos{\left(\frac{\alpha}{2}\right)} \sin{\left(\frac{\alpha}{2}\right)}.$$
We have
$$ \frac{ r^2 \pi \alpha}{2\pi} =  r^2 \arcsin{\left( \frac{\|a-b\|}{2r} \right)} = r \frac{\|a-b\|}{2} + \frac{\|a-b\|^3}{48 r}  + \mbox{higher order terms}$$
as well as
$$ \cos{\left(\frac{\alpha}{2}\right)} = \sqrt{1 - \frac{\|a-b\|^2}{4r^2}} \quad \mbox{and} \quad \sin{\left(\frac{\alpha}{2}\right)} =  \frac{\|a-b\|}{2r}.$$
The higher order terms in the expansion are an infinite series in $\|a-b\|/r$ with exponents decaying fast enough. For $\varepsilon_1$ sufficiently
small depending on $\varepsilon_0$, we can bound
$$ \mbox{higher order terms}  \leq \varepsilon_0  \frac{\|a-b\|^3}{48 r}  .$$
Altogether this implies that
$$\mbox{area} \leq (1 + \varepsilon_0) \frac{\|a-b\|^3}{12r}$$
and plugging in the bound on $1/r$ from \eqref{pomegabd} gives
$$\mbox{area} \leq  \frac{1 + \varepsilon_0}{\sqrt{6}} \| a - b\|^{3/2} \sqrt{|\partial \Omega| - 2 \|a-b\|}.$$
\end{proof}

\subsection{Proof of Theorem 1}

\begin{proof} 
We assume without loss of generality $|Q| = 1$. For this proof we minimize, over all possible set partitions $Q = S \cup (Q \setminus S)$, the quotient
$$ \inf_{\Gamma} \frac{\mathcal{H}^1(\Gamma) }{|S| | Q \setminus S|},$$
where $\Gamma=\overline S\cap \overline{Q \setminus S}$.

Lemma \ref{Lemma: rectangle result} and Remark \ref{rmk1} show that the cut must intersect opposite sides; if the aspect ratio is large enough, then these opposite sides will be the longest sides. 



We now show that the distance to the axis of symmetry of $R$ is of order $\lesssim \sqrt{\varepsilon}$.
Since the Gromov-Hausdorff distance between the domain and the rectangle is $\varepsilon$, by the same calculation in Remark \ref{rmk1} we see that any cut has to have length at least
$ h (1- 2\varepsilon)$, where $h$ is the length of the shorter side of the associated rectangle $R$. Note that the shortest line between two points may not make the optimal Ratio Cut, though, as that may result in a less favorable area splitting. Hence, one possibility is that we get a gain from having a slightly longer curve to encompass a more favorable area, but then by the isoperimetric inequality, it can be seen that circular arcs are favorable to any other configuration in terms of length to area trade-offs.  A key example of this is the trapezoidal domain with angled top and bottom boundary curves. Therefore the optimal spectral cut $Q = A \cup B$ satisfies
$$  \frac{h(1-2\varepsilon)}{|A| |B|} = \frac{h(1-2\varepsilon)}{|A| (1-|A|)} \leq  \frac{\mathcal{H}^1(\Gamma)}{|S| |Q \setminus S|} \leq 4h(1 + 2 \varepsilon)$$
for any $\Gamma$.
Thus, for $\varepsilon \leq 0.1$
\begin{equation}
\frac{1}{|A| (1-|A|)} \leq 4\frac{1 + 2\varepsilon}{1 - 2 \varepsilon} \leq 4  + 20\varepsilon.\label{bound of A(1-A)}
\end{equation}
When combined with the elementary inequality
$$ \frac{1}{x(1-x)}  \geq   4 + 16\left(x-\frac{1}{2}\right)^2,$$
we have that the optimal spectral cut yields two sets satisfying
$$ \frac{1}{2} - \sqrt{\frac{5\varepsilon}{4}} \leq |A| \quad \mbox{and}\quad |B| \leq \frac{1}{2} + \sqrt{\frac{5\varepsilon}{4}}.$$
The Lipschitz bound then ensures that the spectral cut's intersection with $\partial Q$ occurs in a $\sqrt{\epsilon}$ small neighborhood of $Q$'s axis of symmetry. 

Since the functional itself only contains the length $\mathcal{H}^1(\Gamma),$ as well as a term depending on the partition of the areas $|S||\Omega\setminus S|$, we can conclude that the optimal $\Gamma$ is a circular arc; otherwise, we can minimize the length of the curve while keeping the bounded area fixed.  Indeed, this boils down to the question of minimizing the arc-length of a curve of fixed area,
\[
\text{arg min}_{y = \gamma (x)} \left\{  \int_a^b \sqrt{1 + (\gamma'(x) )^2} dx \ \bigg| \ \int_a^b |\gamma| dx \,\,\mbox{ fixed.}\right\},
\] 
which is minimized by a circular arc via the isoperimetric inequality.
We already know from above that $\mathcal{H}^1(\Gamma) \leq (1+2\varepsilon) h$, so Lemma 2 implies that the area captured by the curved
arc satisfies
$$ \mbox{captured area} \leq  \frac{1 + c_{\varepsilon}}{\sqrt{6}} h^{3/2} \sqrt{\mathcal{H}^1(\Gamma) - h} \,\leq  \frac{1 + c_{\varepsilon}}{\sqrt{3}} h^2 \sqrt{\varepsilon} ,$$
where $c_{\varepsilon} > 0$ depends on $\varepsilon$, but does tend to 0 as $\varepsilon$ tends to $0$. 

In summary, the above conclusions show that the Ratio Cut $\Gamma$ must be a circular arc in a $\sqrt{\epsilon}$ neighborhood of the axis of symmetry of the reference rectangle from Assumption $1$. 
\end{proof}

\section{Proof of Theorem \ref{thm2} for Curvilinear Quadrilaterals with Parabolic Top and Bottom Curves}
\label{sec:pf3}

To prove Theorem $2$, we will first analyze how the Ratio Cut depends on parameters determining the domain $Q$ and circular arc cut $\Gamma$ for a true quadrilateral. Then, we will demonstrate for a simple example of a curvilinear trapezoid with one parabolic edge and three flat edges that the curvature has a smaller order impact on the Ratio Cut than the trapezoidal feature. Lastly, we will prove the theorem for even more general domains with parabolic top and bottom bounding curves. These domains will be shown to be generic in section 5, at least to leading order in how the Ratio Cut depends upon the structure of the curves that make up the longest sides of our approximately parabolic curvilinear rectangles. 

Consider the quadrilateral $Q$ determined by the vertices
$$ (0,0), (x_1, a), (1,0) \quad \mbox{and} \quad (1+x_2, a+x_3),$$
where we assume that $ 0 < a < 1$ (implying that we have normalized the quadrilateral and put the longer side on the $x-$axis). The quantities $x_1, x_2, x_3$
are perturbation parameters and assumed to be small, in the sense that $|x_i| \ll \min\left\{a,1- a\right\}$. The condition $|x_i| \ll a$ is clear; we want the perturbation to be small with respect to length and height of the rectangle. The other condition is slightly more subtle: if it is not satisfied, then the quadrilateral might be close to a square and the location of the spectral cut will depend nonlinearly on the perturbation parameters. The cut is still going to be a circular arc nearly bisecting the domain, but a slight perturbation of $x_1, x_2, x_3$ may send $\Gamma$  from being nearly vertical to nearly horizontal (see Fig. \ref{fig:quad}).
The same arguments used to prove Theorem 1 show that the spectral cut will be a circular arc connecting two points ${\bf q} = (q,0)$ and ${\bf p} = (p,y(p))$.

\begin{center}
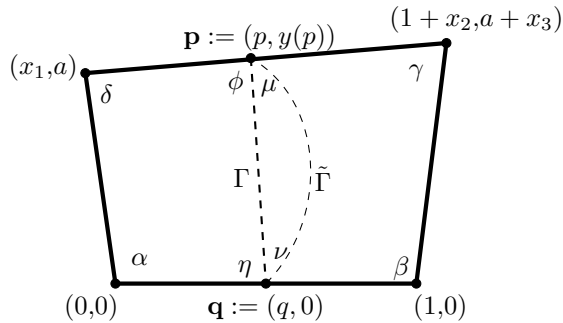
\begin{figure}[ht!]
\begin{tikzpicture}[scale=4]
\draw [ultra thick] (0,0) -- (1,0) -- (1.1, 0.8) -- (-0.1, 0.7) -- (0,0);
\filldraw (0,0) circle (0.015cm);
\node at (-0.08, -0.08) {(0,0)};
\node at (0.08, 0.08) {$\alpha$};
\filldraw (1,0) circle (0.015cm);
\node at (0.95, 0.05) {$\beta$};
\node at (1.08, -0.08) {(1,0)};
\filldraw (1.1,0.8) circle (0.015cm);
\node at (1.2, 0.8+0.08) {($1+x_2$,$a+x_3$)};
\filldraw (-0.1,0.7) circle (0.015cm);
\node at (-0.24, 0.8-0.08) {($x_1$,$a$)};
\filldraw (0.5,0) circle (0.015cm);
\node at (0.5, -0.08) {${\bf q}:=(q,0)$}; 
\filldraw (0.45,0.75) circle (0.015cm);
\node at (0.48,0.82) {${\bf p}:=(p,y (p)) $}; 
\draw [thick,dashed] (0.45, 0.75) -- (0.5,0);
\draw [thin,dashed] (0.5,0) to[out=45,in=330] (0.45, 0.75);
\node at (0.42, 0.35) {$\Gamma$};
\node at (0.69, 0.35) {$\tilde{\Gamma}$};
\node at (1, 0.7) {$\gamma$};
\node at (-0.03, 0.63) {$\delta$};
\node at (0.43, 0.05) {$\eta$};
\node at (0.55, 0.1) {$\nu$};
\node at (0.4, 0.67) {$\phi$};
\node at (0.51, 0.65) {$\mu$};
\end{tikzpicture}
\caption{A general quadrilateral $Q$ after rescaling.}
\label{fig:quad}
\end{figure}
\end{center}

\subsection{Circular Arc to Triangle in terms of the sector angle}

Let us assume the cut, $\tilde E$, is a circular arc from $\bf p$ to $\bf q$, where $\bf p$ and $\bf q$ appear nearly opposite each other on the longest sides.   We will take $|\tilde \Gamma| = r \theta$ for some radius $r>0$ and sector angle $\theta\geq0$, to be determined, and observe then the area contained between the line $\Gamma$ connecting $\bf p$ and $\bf q$ and the circular arc $\tilde \Gamma$, which we will call $\tilde \Omega$, satisfies
\[
| \tilde \Omega | = \frac{r^2 \theta}{2} - \frac{\| {\bf p}- {\bf q}\|^2}{4 \tan (\theta/2)}.
\]
We also have that
\[
r = \frac{\| {\bf p}- {\bf q} \|}{2 \sin (\theta/2)},
\]
from which we conclude, after Taylor expanding in $\theta$, that
\[
| \tilde \Omega | = \|{\bf p}- {\bf q}\|^2 \left( \frac{\theta^2}{48} + O( \theta^4) \right).
\]
Also, Taylor expanding once more, we have
\[
|\tilde \Gamma| = r \theta = \| {\bf p}- {\bf q}\| \left( 1 + \frac{\theta^2}{24} + O( \theta^4) \right).
\]

With these identities in hand, we can proceed to explore how the ratio cut depends upon the curves defining the longest aspect ratio sides of our curvilinear quadrilateral.  To illustrate how to compute the dependence of the cut locally on the parametrization of a curve, we will first work with a toy model that is flat on $3$ sides and has a smooth quadratic curve on one of the long sides.  

\subsection{The parabolic trapezoid}
\label{partrap}

We take a domain that is a parabolic trapezoid bounded by a straight line from $(0,0)$ to $(1,0)$, a straight line from $(0,0)$ to $(0,1/2)$, the straight line from $(1,0)$ to $(1, 1/2+a)$ and the curve $y(x) = \epsilon x^2 + (a - \epsilon) x + 1/2$; note that, for convenience, we have chosen our aspect ratio to be approximately $2$.  Our goal is to track how the cut changes as a function of $\epsilon,a$ close to $0$.   The Ratio Cut will intersect the bottom and top boundary curves at the points $(q,0)$ and $(p,y(p))$ respectively, with the cut itself a circular arc.  Splitting the domain by a circular arc, we can decompose one of the cut domains into two curvilinear quadrilaterals, see Figure \ref{fig:quad2} for an illustration.

\begin{center}
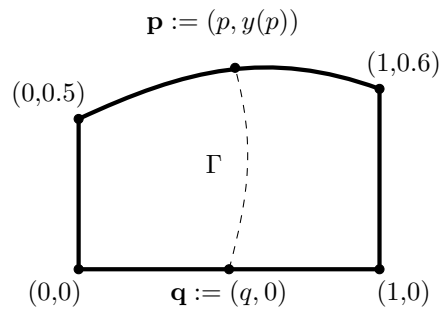
\begin{figure}[ht!]
\begin{tikzpicture}[scale=4]
\draw [ultra thick] (0,1/2)--(0,0) -- (1,0) -- (1, 1/2+0.1) ;
\draw [ultra thick] (0,0.5) to[out=25,in=160] (1, 0.6);
\filldraw (0,0) circle (0.015cm);
\node at (-0.08, -0.08) {(0,0)};
\filldraw (1,0) circle (0.015cm);
\node at (1.08, -0.08) {(1,0)};
\filldraw (1,1/2+0.1) circle (0.015cm);
\node at (1.08, 1/2+0.1+0.08) {($1$,$0.6$)};
\filldraw (0,0.5) circle (0.015cm);
\node at (-0.1, 0.5+0.08) {($0$,$0.5$)};
\filldraw (0.5,0) circle (0.015cm);
\node at (0.5, -0.08) {${\bf q}:=(q,0)$}; 
\filldraw (0.52,0.67) circle (0.015cm);
\node at (0.48,0.82) {${\bf p}:=(p,y (p)) $}; 
\draw [thin,dashed] (0.5,0) to[out=75,in=285] (0.52, 0.67);
\node at (0.45, 0.35) {$\Gamma$};
\end{tikzpicture}
\caption{A parabolic trapezoid $Q$ as a simplified model.}
\label{fig:quad2}
\end{figure}
\end{center}

It can be easily seen that to compute the Ratio Cut, we can split the domains into components given by the region $[0,p]$, a triangle with base $[p,q]$, and the cap of a circular arc that is either added or subtracted depending upon orientation ($p<q$ or $q<p$ and $\theta > 0$ or $\theta < 0$ respectively). Decomposing the domain into these components, we compute
\begin{align*}
&A_{total}  = \int_0^1 y(x) dx = \frac{\epsilon}{3} + \frac{a-\epsilon}{2} + \frac12, \ \ \text{(Total Area)} \\
&A_1 (p,q,\theta)  = \int_0^p y(x) dx + \frac12 y(p) (q-p)+\frac12 R^2 (p,q,\theta) \theta \\
& \hspace{4cm} - \frac14 \left[   (q-p)^2 + y(p)^2 \right] \cot \left( \frac{\theta}{2} \right), \ \ \text{(Left Area)} \\
& A_2 (p,q,\theta)  = A_{total} - A_1 (p,q,\theta), \ \ \text{(Right Area)} 
\end{align*}
where $R(p,q,\theta)$ is the radius of the circle, given by
\begin{align}
R(p,q,\theta)  = \frac{\sqrt{  (q-p)^2 + y(p)^2 }}{ 2 \sin \left( \frac{\theta}{2} \right)}\,.
\end{align}
The above quantities give the ratio cut, denoted by $\text{RC} (p,q,\theta)$:
\begin{align*}
 \text{RC} (p,q,\theta) = \frac{ R(p,q,\theta) \theta}{ A_1 (p,q,\theta) A_2 (p,q,\theta)}.
\end{align*}
It follows via direct calculation that $ \text{RC} (p,q,\theta)$ is a smooth function of $(p,q,\theta)$ in a neighborhood of $(1/2,1/2,0)$. Indeed, since $\frac12 R^2 (p,q,\theta) \theta - \frac14 \left[   (q-p)^2 + y(p)^2 \right] \cot \left( \frac{\theta}{2} \right)\to 0$ and $R(p,q,\theta) \theta\to \sqrt{  (q-p)^2 + y(p)^2 }$ when $\theta\to 0$, $\text{RC} (p,q,\theta)$ is smooth in a neighborhood of $(1/2,1/2,0)$. Hence we can explore behaviors nearby using the Implicit Function Theorem.    

As an illustrative calculation, let us simply take the full quadratic approximation in $\theta$, $p-\frac12$, $q-\frac12$, $a$ and $\epsilon$ to the Ratio Cut. A direct calculation gives:
\begin{small}
\begin{align*}
& \text{RC} (p,q,\theta)  {\color{red}=} \left(  8 + 24 \left(q - \frac12 \right)^2 - 16 (q-\frac12) (p-\frac12) + 24 (p - \frac12)^2 \right) \\
& - a \left(8 + 8\left(q - \frac12 \right) - 8 \left(p - \frac12 \right) - 56\left(q - \frac12 \right) ^2 - 56 \left(p - \frac12 \right)^2 + 80 \left(q - \frac12 \right)  \left(p - \frac12 \right)    \right) \\
& + \epsilon \left(  \frac43 + \frac{52}{3} \left(q - \frac12 \right)^2 + \frac{100}{3} \left(p - \frac12 \right)^2    - 40 \left(q - \frac12 \right)  \left(p - \frac12 \right)  \right) + \frac43 \theta \left(   \left(q - \frac12 \right) + \left(p - \frac12 \right) \right) \\
& + a^2  \left( 10 + 16 \left(q - \frac12 \right) - 16 \left(p - \frac12 \right)  + 120 \left(q - \frac12 \right)^2 + 104 \left(p - \frac12 \right)^2  -192 \left(q - \frac12 \right) \left(p - \frac12 \right) \right)  \\
& + \epsilon^2 \left(  \frac{122}{9}  \left(q - \frac12 \right)^2 + \frac{218}{9} \left(p - \frac12 \right)^2  - \frac{284}{9} \left(q - \frac12 \right)  \left(p - \frac12 \right)   \right)  +  \frac{7}{18} \theta^2   \\
& -a \epsilon \left(  \frac83 + \frac{8}{3} \left(q - \frac12 \right) - 8 \left(p - \frac12 \right)   + 72 \left(q - \frac12 \right)^2 + 104 \left(p - \frac12 \right)^2 - \frac{464}{3} \left(q - \frac12 \right)  \left(p - \frac12 \right)   \right)  \\
&  -  a \theta \left(  \frac23 + 8 \left(q - \frac12 \right)^2 - \frac{32}{3}  \left(q - \frac12 \right)  \left(p - \frac12 \right)  \right)  - \frac89 \epsilon \theta \left( \left(q - \frac12 \right)  + \left(p - \frac12 \right)   \right)  
\end{align*}\end{small} 
up to a higher order error, when $a$ and $\epsilon$ are sufficiently small.
While this may not look so useful, we get a great deal of information by looking at the system when $a = \epsilon = 0$.  In such a case, the equations for a critical point in $p,q,\theta$ become
\begin{align*}
48 \left(q - \frac12 \right)  - 16 \left(p - \frac12 \right) + \frac43 \theta & = 0 \\
-16 \left(q - \frac12 \right)  + 48 \left(p - \frac12 \right) + \frac43 \theta & = 0 \\
\frac43 \left(q - \frac12 \right)  + \frac43 \left(p  - \frac12 \right) + \frac79 \theta & = 0.
\end{align*}
Thus, the Jacobian matrix is
\begin{equation*}
J_{a=0,\epsilon=0} = \left[  \begin{array}{rrr}
48 & -16 & \frac43 \\
-16 & 48 & \frac43 \\
\frac43 & \frac43 & \frac79
\end{array}
\right],
\end{equation*}
which is non-singular.  As a result, we observe that if $a = \epsilon = 0$, the optimal solutions is $p = q = \frac12$, $\theta = 0$.  We know this from symmetry arguments, but now we also have set ourselves up for an application of the Implicit Function Theorem in order approximate the RC for near rectangular domains.  

We next observe what happens if $a \neq 0$, $\epsilon = 0$. This gives the modified system
\begin{equation*}
J_{a,\epsilon=0} \left[ \begin{array}{c}
\left(q - \frac12 \right)  \\
\left(p - \frac12 \right) \\
\theta 
\end{array} \right] = a \left[ \begin{array}{r}
8 \\
-8 \\
\frac23
\end{array} \right]  + \text{Quadratic Error in $a, q-\frac12, p-\frac12$},
\end{equation*}
where
\begin{equation*}
J_{a,\epsilon=0} = \left[  \begin{array}{ccc}
48 -112 a & -16 + 80 a& \frac43 \\
-16 +80 a & 48 -112 a & \frac43 \\
\frac43 & \frac43 & \frac79
\end{array}
\right],
\end{equation*}
Hence, $\vec v$ satisfying $J_{a,\epsilon=0} \vec{ v }= \vec{0}$ is given by 
\begin{equation*}
\vec{v} = a J_{a=0,\epsilon=0}^{-1} \left[ \begin{array}{c}
8 \\ -8 \\ \frac23
\end{array} \right] + O(a^2) = a \left[  \begin{array}{c}
\frac{1}{12} \\ -\frac16 \\ 1 \end{array}  \right] + O(a^2).
\end{equation*}
Note that the distance between $p$ and $q$ is then $\frac{a}{4} < a$, with the maximal amplitude of the bulge from the circular arc of size 
\[  \frac{a}{8}\sqrt{y(p)^2+(p-q)^2} < \frac{a}{8}. \]  
Since the terms that are linear in $\epsilon$ are quadratic or higher in $q-1/2,p-1/2$, a similar analysis including $\epsilon$ shows that the curvature of the parabolic curve is actually a lower order deformation for the Ratio Cut than the trapezoidal deflection.  Importantly, this argument demonstrates that the trapezoidal deflection is decreasing in the new cut domain.  Though the new cut domain will be closer to aspect ratio $1$, the overall deflections are still decreasing on subsequent domains.

Using the Implicit Function Theorem, we are able to compute comparable results for the full Ratio Cut.  Indeed, to turn this into a rigorous argument, we need first observe that the minimum Ratio Cut at $a=\epsilon = 0$ is uniquely $p=q=\frac12$, $\theta=0$, which is easily seen by looking at the Hessian.  Then, we look at $ \text{RC} (p,q,\theta; a, \epsilon)$ as a map from $\mathbb{R}^5 \to \mathbb{R}^3$, and use the Implicit Function Theorem to construct the desired local map from $(p,q,\theta;\,a,\epsilon)$ in an open set around $(a,\epsilon) = (0,0)$. 

\subsection{The Ratio Cut with sides given by parabolic approximations}
\label{arcs}

Now, we proceed to handle a more general family of domains.  Given our assumption on the smoothness of the curves, we can assume that the top and bottom curves are approximated by quadratic curves to high accuracy near points of intersection with the ratio cut.  Specifically, let us consider an arbitrary domain $Q$ that can be approximated (in the Gromov-Hausdorff sense) by a parabolic trapezoid $Q_0$ with vertices $(0,0),(0,\frac{1}{2}+a_1),(1,\frac{1}{2}+a_2),$ and $(1,0)$. Note, the inclusion of $a_1$ and $a_2$ here will allow us to vary the aspect ratio. We fix the width of $Q_0$ to $1$, however, as can always be done by a scaling of the domain.  The top paraboloid of $Q_0$ is parametrized as 
$$
y_{T}(x) = \epsilon_{t}x^2 + (a_2 - a_1 - \epsilon_{t})x + a_1 + \frac{1}{2},
$$ 
and the bottom paraboloid is parametrized as 
$$
y_{B}(x) = \epsilon_{b}x^2 -\epsilon_b x.
$$ 
We assume that $Q$ and $Q_0$ differ by two sufficiently small and bounded ``black-box'' regions on the left and right. 
The areas of these two small regions will be denoted $A_{\texttt{WL}}$ and $A_{\texttt{WR}}$.

\begin{center}
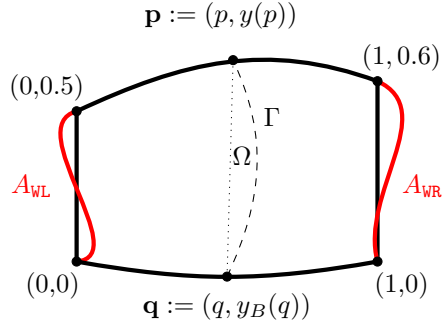
\begin{figure}[ht!]
\begin{tikzpicture}[scale=4]
\draw [ultra thick] (0,1/2)--(0,0) ;

\draw [red,ultra thick] (0,0) to[out=5,in=195] (0, 0.5);
\node[red] at (-0.15, 0.25) {$A_{\texttt{WL}}$};

\draw [ultra thick] (1,0) -- (1, 1/2+0.1) ;

\draw [red,ultra thick] (1,0) to[out=105,in=335] (1, 0.6);
\node[red] at (1.15, 0.25) {$A_{\texttt{WR}}$};

\draw [ultra thick] (0,0.5) to[out=25,in=160] (1, 0.6);
\draw [ultra thick] (0,0) to[out=-10,in=190] (1, 0);
\filldraw (0,0) circle (0.015cm);
\node at (-0.08, -0.08) {(0,0)};
\filldraw (1,0) circle (0.015cm);
\node at (1.08, -0.08) {(1,0)};
\filldraw (1,1/2+0.1) circle (0.015cm);
\node at (1.08, 1/2+0.1+0.08) {$(1,0.6)$};
\filldraw (0,0.5) circle (0.015cm);
\node at (-0.1, 0.5+0.08) {($0$,$0.5$)};
\filldraw (0.5,-0.05) circle (0.015cm);
\node at (0.5, -0.15) {${\bf q}:=(q,y_B(q))$}; 
\filldraw (0.52,0.67) circle (0.015cm);
\node at (0.48,0.82) {${\bf p}:=(p,y (p)) $}; 
\draw [thin,dashed] (0.5,-0.05) to[out=65,in=295] (0.52, 0.67);
\draw [dotted] (0.5,-0.05) -- (0.52, 0.67) ;
\node at (0.65, 0.48) {$\Gamma$};
\node at (0.55, 0.35) {$\Omega$};
\end{tikzpicture}
\caption{A more general domain $Q$ approximated by a parabolic trapezoid.}
\label{fig:quad3}
\end{figure}
\end{center}

We start by preparing quantities associated to $Q_0$. A circular arc $\Gamma$ passing through the points $(p,y_{T}(p))$ and $(q, y_B(q))$, with angle $\theta$, cuts the parabolic trapezoid into a left and right domain, denoted by $S$ and $Q_0\backslash S$. 
	To compute an equivalent analytic expression for the ratio cut, we use Stoke's theorem 
	to compute the left area $A_L = |S|$ and total $A_T = |Q_0|$, which we use to compute the right area $A_R = |Q_0\backslash S| = |Q_0|-|S|$. In particular, 
	$$
	A_T = |Q_0| = \int_{Q_0} dA = \frac{1}{2} \int_{\partial Q_0} xdy - ydx,
	$$
	where we integrate along the left vertical boundary $\{(0,t): 0\leq t\leq \frac{1}{2}+a_1\}$, along the top parabolic curve $\{(x,y_T(x)): 0\leq x\leq 1 \},$ along the right vertical boundary $\{(1,t): \frac{1}{2}+a_2\leq t\leq 0 \}$ (with the indicated orientation), and finally along the bottom parabolic curve $\{(x,y_B(x)): 1\leq x\leq 0 \}$ (with the indicated orientation). 
	We can compute the (indefinite) integrals for the two parabolic pieces fairly easily:
	\begin{align*}
		dy_T 	&= (2\epsilon_t x + a_2-a_1-\epsilon_t)dx,\\
		dy_B 	&= (2\epsilon_b x -\epsilon_b)dx,
	\end{align*}
	and so
	\begin{align*}
		\frac{1}{2} \int xdy_T - y_T dx 	&= \frac{1}{2} \int x(2\epsilon_t x + a_2-a_1-\epsilon_t)dx - (\epsilon_{t}x^2 + (a_2 - a_1 - \epsilon_{t})x + a_1 + \frac{1}{2})dx\\
			&= \frac{1}{2}\int \epsilon_t x^2-(a_1+\frac{1}{2}) dx
			= \frac{\epsilon_t}{6}x^3 - \left(\frac{a_1}{2}+ \frac{1}{4} \right)x;\\
		\frac{1}{2} \int xdy_B - y_B dx 	&= \frac{1}{2}\int x(2\epsilon_b x -\epsilon_b)dx - (\epsilon_{b}x^2 + -\epsilon_b x)dx\\
			&= \frac{1}{2} \int \epsilon_b x^2 dx
			= \frac{\epsilon_b}{6}x^3.
	\end{align*}
The $1$-forms for the vertical components are 
	\begin{align*}
		\frac{1}{2} \int 0dt - t d(0) &= \frac{1}{2} \int 0 = 0,\\
		\frac{1}{2} \int 1 dt - t d(1) 	&= \frac{1}{2} \int dt - 0 = \frac{t}{2}.
	\end{align*}
	Putting it all together, we have
	\begin{align*}
		A_T &= 0 + \left[ \frac{\epsilon_t}{6}x^3 - \left(\frac{a_1}{2}+ \frac{1}{4} \right)x \right]_{x=0}^{x=\frac{1}{2}+a_1} + \left[\frac{t}{2} \right]_{t=\frac{1}{2}+a_2}^{0} + \left[ \frac{\epsilon_b}{6}x^3 \right]_{x=1}^{x=0}\\
			&= \frac{\epsilon_t a_1^3}{6} + \frac{\epsilon_t a_1^2}{4} + \frac{\epsilon_t a_1}{8} + \frac{\epsilon_t}{48} -\frac{\epsilon_b}{6} - \frac{a_2}{2} - \frac{a_1^2}{2} - \frac{a_1}{2} - \frac{3}{8}.
	\end{align*}
	
	For $A_L = |S|$, we use Stokes' theorem again but split $S$ into two parts divided by the straight line connecting $(p,y_t(p))$ and $(q,y_b(q))$; these parts are a curvilinear quadrilateral and circular cap. The area of the curvilinear quadrilateral is computed by Stokes' theorem, while the area of the circular cap is determined by an elementary formula. Adding these two quantities gives us $A_L$.

	The line connecting $(p,y_T(p))$ to $(q,y_B(q))$ is parametrized as $\{ (1-t)(p,y_T(p)) + t(q,y_B(q)): 0\leq t \leq 1 \},$ which componentwise becomes
	$$ 
	( (1-t)p + t q, (1-t)y_T(p) + ty_B(q)), 
	$$
	and so the integral along this straight line becomes
	\begin{align*}
		\frac{1}{2} \int xdy - ydx 	=\,& \frac{1}{2} \int ((1-t)p + t q)d((1-t)y_T(p) + ty_B(q))\\ &\quad - ((1-t)y_T(p) + ty_B(q))d((1-t)p + t q)\\
			=\,& \frac{1}{2} \int ((1-t)p + t q)(-y_T(p) dt + y_B(q) dt)\\
				&\quad - ((1-t)y_T(p) + ty_B(q)) ( -p dt + q dt)\\
			=\,& t q (p (\epsilon_b q-\epsilon_b-\epsilon_t p+\epsilon_t)-a_2 (p+1))\,,
	\end{align*}
	where we denote the last quantity by $SL(t)$. 
	
	Given two points $p$ and $q$, and an angle $\theta$, the radius of the circle containing points $p$ and $q$ separated by angle $\theta$ is 
	$$
	R = \frac{\|p-q\|_2}{2\sin(\frac{\theta}{2})}.
	$$ 
	Thus, the area of the circular segment is	{\allowdisplaybreaks
	$$
	|\Omega|= \frac{R^2}{2}(\theta - \sin(\theta)) = \frac{(p - q)^2 + (y_T(p) - y_B(q))^2}{2}(\theta - \sin(\theta)),
	$$ 
	and so
	\begin{align*}
		A_L =\,& 0 + \left[ \frac{\epsilon_t}{6}x^3 - \left(\frac{a_1}{2}+ \frac{1}{4} \right)x \right]_{x=0}^{x=p} + [SL(t)]_{t=0}^{t=1} + \left[ \frac{\epsilon_b}{6}x^3 \right]_{x=q}^{x=0} + |\Omega|\\
		=\,& \frac{1}{6} \left( -\epsilon_b q^3 - \frac{3}{2}(1+2a_1)p + \epsilon_t p^3 + 6q (-a_2(1+p) + p( - \epsilon_b + \epsilon_t + \epsilon_t q - \epsilon_t p))\right.\\
		& \left.+ 3\left( (q-p)^2 + \left( a_2  + \epsilon_b q - \epsilon_b q^2 + a_2p - \epsilon_t p + \epsilon_t p^2\right)^2\right) (\theta - \sin(\theta))\right).
	\end{align*}
	We can use this to get the area of the right portion:
	\begin{align*}
		A_R =& A_T - A_L \\
			=& \frac{1}{48} \bigg(-18-24 a_1-24 a_1^2-24 a_2-8 \epsB+\epsT+6 a_1 \epsT+12 a_1^2 \epsT\\
			&+8 a_1^3 \epsT+8 \epsB \tBott^3+12 (1+2 a_1) \tTop-8 \epsT \tTop^3\\
			&-48 \tBott \big(-a_2 (1+\tTop)+\tTop (\epsT+\epsB (-1+\tBott)-\epsT \tTop)\big)\\
			&-24 \big((\tBott-\tTop)^2+(a_2+\epsB \tBott-\epsB \tBott^2+a_2 \tTop-\epsT \tTop+\epsT \tTop^2)^2\big) (\theta-\sin(\theta))\bigg).
	\end{align*}
	}
	Finally, the length of the ratio cut takes the form $$|\Gamma| = R\theta = \frac{\|(p-q, y_T(p)-y_B(q))\|_2 \frac{\theta}{2}}{\sin(\frac{\theta}{2})} = \frac{\|(p-q, y_T(p)-y_B(q))\|_2}{\sinc(\frac{\theta}{2})}.$$
	With the above pieces from $Q_0$, the ratio cut of $Q$ 
%
	can be expressed in terms of variables $p, q,$ and $\theta$, together with parameters $a_1,a_2, \epsT, \epsB, A_{\texttt{WL}}, A_{\texttt{WR}}$ that determine the domain.  Specifically, by letting $\sigma = (a_1,a_2, \epsT, \epsB, A_{\texttt{WL}}, A_{\texttt{WR}})$ be the parameter vector, we can express the ratio cut as 
	\begin{align*}
		RC(q, p, \theta; \sigma) 	&= \frac{\|(p-q, y_T(p)-y_B(q))\|_2}{\sinc(\frac{\theta}{2})(A_L + A_{\texttt{WL}})(A_R+A_{\texttt{WR}})}.
	\end{align*}
	The series expansion of $RC(q, p, \theta;\sigma)$ near $\left( \frac{1}{2}, \frac{1}{2}, 0;   \vec{0} \right)$ is
	\begin{align*}
		RC 	=& 8 + 24 \tBCent^2 - 16 \tBCent \tTCent + \frac{4}{3} \tBCent \Th\\
			&+ 24 \tTCent^2 + \frac{4}{3} \tTCent \Th + \frac{7}{18}\Th^2\\
			&+ a_1 p_{a_1}(q, p, \theta) + a_2 p_{a_2}(q, p, \theta) + a_3 p_{a_3}(q, p, \theta)\\
			&+ \epsilon_t p_{\epsilon_t}(q, p, \theta) + \epsilon_b p_{\epsilon_b}(q, p, \theta)\\
			&+ A_{\texttt{WL}} p_{A_{\texttt{WL}}}(q, p, \theta) + A_{\texttt{WR}} p_{A_{\texttt{WR}}}(q, p, \theta)\\
			&+ a_1^{\,2} p_{a_1 a_1}(q, p, \theta) + a_1a_2 p_{a_1 a_2}(q, p, \theta) + a_1a_3 p_{a_1 a_3}(q, p, \theta)\\
			&+ a_2^{\,2} p_{a_2a_2}(q, p, \theta) + a_2a_3 p_{a_2a_3}(q, p, \theta) + a_3^{\, 2} p_{a_3a_3}(p, q, \theta)\\
			&+ a_1A_{\texttt{WL}} p_{a_1A_{\texttt{WL}}}(q, p, \theta) + a_1A_{\texttt{WR}} p_{a_1A_{\texttt{WR}}}(q, p, \theta)\\
			&+ a_1 \epsT p_{a_1 \epsT}(q, p, \theta) + a_1 \epsB p_{a_1 \epsB}(q, p, \theta)\\
			&+ a_2 A_{\texttt{WL}} p_{a_2A_{\texttt{WL}}}(q, p, \theta) + a_2A_{\texttt{WR}} p_{a_2 A_{\texttt{WR}}}(q, p, \theta)\\
			&+ a_2 \epsT p_{a_2 \epsT}(q, p, \theta) + a_2 \epsB p_{a_2 \epsB}(q, p, \theta)\\
			&+ a_3 A_{\texttt{WL}} p_{a_3A_{\texttt{WL}}}(q, p, \theta) + a_3A_{\texttt{WR}} p_{a_3 A_{\texttt{WR}}}(q, p, \theta)\\
			&+ a_3 \epsT p_{a_3 \epsT}(q, p, \theta) + a_3 \epsB p_{a_3 \epsB}(q, p, \theta)\\
			&+ A_{\texttt{WL}}A_{\texttt{WL}} p_{A_{\texttt{WL}} A_{\texttt{WL}}}(q, p, \theta) + A_{\texttt{WL}} A_{\texttt{WR}} p_{A_{\texttt{WL}} A_{\texttt{WR}}}(q, p, \theta)\\
			&+ A_{\texttt{WR}} A_{\texttt{WR}} p_{A_{\texttt{WR}} A_{\texttt{WR}}}(q, p, \theta)\\
			&+ A_{\texttt{WL}} \epsT p_{A_{\texttt{WL}} \epsT}(q, p, \theta) + A_{\texttt{WL}} \epsB p_{A_{\texttt{WL}} \epsB}(q, p, \theta)\\
			&+ A_{\texttt{WR}} \epsT p_{A_{\texttt{WR}} \epsT}(q, p, \theta) + A_{\texttt{WR}} \epsB p_{A_{\texttt{WR}} \epsB}(q, p, \theta)\\
			&+ \epsT \epsT p_{\epsT \epsT}(q, p, \theta) +  \epsT \epsB p_{\epsT \epsB}(q, p, \theta) +  \epsB \epsB p_{\epsB \epsB}(q, p, \theta)\,,
	\end{align*}
up to a higher order error, where the polynomial $p_{\sigma^{\alpha}}$ is the partial derivative $\left. \frac{\partial^{\alpha} RC}{\partial \sigma^{\alpha}} \right|_{\sigma = 0}$ for a multi-index $\alpha$.  Detailed fomulae for these terms are given in the Appendix.

	Next, we compute the linearization of $RC$ near the point $(q,p,\theta) = (\frac{1}{2}, \frac{1}{2}, 0)$:
	\begin{align*}
		\left.\frac{\partial RC}{\partial q}\right|_{\sigma = 0} &= 48\big (q - \frac{1}{2}\big) - 16 \big(p - \frac{1}{2}\big) + \frac{4}{3}\theta,\\
		\left.\frac{\partial RC}{\partial p}\right|_{\sigma = 0} &= -16 \big(q - \frac{1}{2}\big) +48 \big(p - \frac{1}{2}\big) + \frac{4}{3}\theta,\\
		\left.\frac{\partial RC}{\partial \theta}\right|_{\sigma = 0} &= \frac{4}{3} \big(q - \frac{1}{2}\big) + \frac{4}{3} \big(p - \frac{1}{2}\big) + \frac{7}{9}\theta.
	\end{align*}
	The Jacobian of $RC$ at $\sigma = 0$ is thus
	$$
	J = \begin{pmatrix} 48 & -16 & \frac{4}{3}\\ -16 & 48 & \frac{4}{3} \\ \frac{4}{3} & \frac{4}{3}  & \frac{7}{9}  \end{pmatrix}.
	$$
Here, and below, we have kept the $0$-coefficient terms solely as a place keeper to demonstrate that we have actually computed the coefficients of all the terms in our expansion, as well as to make it easier to verify the formulae for the interested reader.
	Next, we explore the other pieces of the linearization of $RC$, namely all first-order terms in the variables, together with the parameters.
	{\allowdisplaybreaks
	Explicitly, we have
	\begin{align*}
		\frac{\partial RC}{\partial q} 	&= \left( 48 - 112 a_1 - 112 a_2 + 112 a_3 - 256 A_{\texttt{WL}} - 256 A_{\texttt{WR}} - \frac{200}{3} \epsilon_b + \frac{104}{3} \epsilon_t \right)\left( q - \frac{1}{2} \right)\\
		&\quad  + \left(-16 + 80 a_1 + 80a_2 - 80a_3 + 40 \epsilon_b - 40\epsilon_t \right)\left( p - \frac{1}{2} \right)\\
		&\quad  + \left(\frac{4}{3} - \frac{32}{3}A_{\texttt{WL}} - \frac{32}{3}A_{\texttt{WR}} + \frac{8}{9} \epsilon_b - \frac{8}{9}\epsilon_t \right)\theta\\
		&\quad  + \Big(8a_1 -8a_2 - 8a_3 + 32 A_{\texttt{WL}} - 32 A_{\texttt{WR}} - 32 a_1 a_1 + 0 a_1a_2 + 32 a_1a_3\\
		&\quad\quad +32 a_2a_2 + 0 a_2a_3 - 32 a_3a_3 - 128a_1 A_{\texttt{WL}} + 0 a_1 A_{\texttt{WR}} + \frac{8}{3} a_1 \epsilon_t - \frac{8}{3} a_1\epsilon_b\\
		&\quad \quad  + 0a_2A_{\texttt{WL}} + 128 a_2 A_{\texttt{WR}} - \frac{8}{3} a_2 \epsilon_t + \frac{8}{3} a_2 \epsilon_b\\
		&\quad \quad  + 64 a_3 A_{\texttt{WL}} - 64 a_3 A_{\texttt{WR}} - 8 a_3 \epsilon_t + 8 a_3 \epsilon_b\\
		&\quad \quad  - 512 A_{\texttt{WL}} A_{\texttt{WL}} + 0 A_{\texttt{WL}}A_{\texttt{WR}} + 512 A_{\texttt{WR}}A_{\texttt{WR}} \\
		&\quad \quad  + \frac{32}{3} A_{\texttt{WL}} \epsilon_t - \frac{32}{3} A_{\texttt{WL}} \epsilon_b - \frac{32}{3} A_{\texttt{WR}} \epsilon_t + \frac{32}{3} A_{\texttt{WR}}\epsilon_b + 0 \epsilon_t \epsilon_t + 0 \epsilon_t \epsilon_b + 0 \epsilon_b \epsilon_b \Big)\\
		&\quad + \text{ cubic terms},
	\end{align*}
	\begin{align*}
		\frac{\partial RC}{\partial p} 	&= \left(-16 + 80 a_1 + 80a_2 - 80a_3 + 40 \epsilon_b - 40\epsilon_t \right)\left( q - \frac{1}{2} \right)\\
		&\quad + \left(48 - 112 a_1 - 112 a_2 +112 a_3 - 256 A_{\texttt{WL}} - 256 A_{\texttt{WR}} - \frac{104}{3} \epsilon_b + \frac{200}{3} \epsilon_t \right)\left( p - \frac{1}{2} \right)\\
		&\quad  + \left(\frac{4}{3} - \frac{32}{3}A_{\texttt{WL}} - \frac{32}{3}A_{\texttt{WR}} + \frac{8}{9} \epsilon_b - \frac{8}{9}\epsilon_t \right)\theta\\
		&\quad  + \left(-8a_1 + 8a_2 + 8a_3 + 32A_{\texttt{WL}} - 32 A_{\texttt{WR}} +32 a_1 a_1 + 0 a_1a_2 - 32 a_1a_3\right.\\
		&\quad\quad \left.-32 a_2a_2 + 0 a_2a_3 + 32 a_3a_3 - 64 a_1 A_{\texttt{WL}} + 64 a_1 A_{\texttt{WR}} - 8 a_1 \epsilon_t + 8 a_1\epsilon_b\right.\\
		&\quad \quad  \left. - 64 a_2A_{\texttt{WL}} + 64 a_2 A_{\texttt{WR}} + 8 a_2 \epsilon_t - 8 a_2 \epsilon_b\right.\\
		&\quad \quad \left. + 0 a_3 A_{\texttt{WL}} - 128 a_3 A_{\texttt{WR}} + \frac{8}{3} a_3 \epsilon_t - \frac{8}{3} a_3 \epsilon_b\right.\\
		&\quad \quad \left. - 512 A_{\texttt{WL}} A_{\texttt{WL}} + 0 A_{\texttt{WL}}A_{\texttt{WR}} + 512 A_{\texttt{WR}}A_{\texttt{WR}} \right.\\
		&\quad \quad \left. + \frac{32}{3} A_{\texttt{WL}} \epsilon_t - \frac{32}{3} A_{\texttt{WL}} \epsilon_b + \frac{32}{3} A_{\texttt{WR}} \epsilon_t + \frac{32}{3} A_{\texttt{WR}}\epsilon_b + 0 \epsilon_t \epsilon_t + 0 \epsilon_t \epsilon_b + 0 \epsilon_b \epsilon_b \right)\\
		&\quad + \text{ cubic terms},
	\end{align*}
	and
	\begin{align*}
		\frac{\partial RC}{\partial \theta} 	&= \left(\frac{4}{3} - \frac{32}{3}A_{\texttt{WL}} - \frac{32}{3}A_{\texttt{WR}} + \frac{8}{9} \epsilon_b - \frac{8}{9}\epsilon_t \right)\left( q - \frac{1}{2} \right)\\
		&\quad  + \left(\frac{4}{3} - \frac{32}{3}A_{\texttt{WL}} - \frac{32}{3}A_{\texttt{WR}} + \frac{8}{9} \epsilon_b - \frac{8}{9}\epsilon_t \right)\left( p - \frac{1}{2} \right)\\
		&\quad  + \left(\frac{7}{9} - \frac{5}{9} a_1 - \frac{5}{9}a_2 + \frac{5}{9} a_3 - \frac{32}{9}A_{\texttt{WL}} - \frac{32}{9}A_{\texttt{WR}} + \frac{1}{54}\epsilon_b - \frac{1}{54} \epsilon_t \right)\theta\\
		&\quad + \left(\frac{2}{3}a_1 - \frac{2}{3}a_2 + \frac{2}{3} a_3 + \frac{8}{3}A_{\texttt{WL}} - \frac{8}{3}A_{\texttt{WR}} - \frac{4}{3} a_1 a_1 + 0 a_1a_2 + 0 a_1a_3\right.\\
		&\quad\quad \left.+ \frac{4}{3} a_2a_2 - \frac{4}{3} a_2a_3 + \frac{4}{3} a_3a_3 - 8 a_1 A_{\texttt{WL}} - \frac{8}{3} a_1 A_{\texttt{WR}} - \frac{1}{9} a_1 \epsilon_t + \frac{1}{9} a_1\epsilon_b\right.\\
		&\quad \quad  \left.+ \frac{8}{3} a_2A_{\texttt{WL}} + 8 a_2 A_{\texttt{WR}} + \frac{1}{9} a_2 \epsilon_t - \frac{1}{9} a_2 \epsilon_b\right.\\
		&\quad \quad \left. - \frac{8}{3} a_3 A_{\texttt{WL}} - 8 a_3 A_{\texttt{WR}} - \frac{1}{9} a_3 \epsilon_t + \frac{1}{9} a_3 \epsilon_b\right.\\
		&\quad \quad \left. - \frac{128}{3} A_{\texttt{WL}} A_{\texttt{WL}} + 0 A_{\texttt{WL}}A_{\texttt{WR}} + \frac{128}{3} A_{\texttt{WR}}A_{\texttt{WR}} \right.\\
		&\quad \quad \left. - \frac{4}{9} A_{\texttt{WL}} \epsilon_t + \frac{4}{9} A_{\texttt{WL}} \epsilon_b + \frac{4}{9} A_{\texttt{WR}} \epsilon_t - \frac{4}{9} A_{\texttt{WR}}\epsilon_b + 0 \epsilon_t \epsilon_t + 0 \epsilon_t \epsilon_b + 0 \epsilon_b \epsilon_b \right)\\
		&\quad + \text{ cubic terms}.
	\end{align*}
	With these expansions, we can express the linearization as $J+ J_{\sigma}$, where the terms $J_{\sigma,ii}$ come from the partials computed above. Explicitly,
	\begin{align*}
		J_{\sigma,11} 	&= - 112 a_1 - 112 a_2 - 256 A_{\texttt{WL}} - 256 A_{\texttt{WR}} - \frac{200}{3} \epsilon_b + \frac{104}{3} \epsilon_t,\\
		J_{\sigma,12}	&= 80 a_1 + 80a_2 + 40 \epsilon_b - 40\epsilon_t,\\
		J_{\sigma,13} 	&= - \frac{32}{3}A_{\texttt{WL}} - \frac{32}{3}A_{\texttt{WR}} + \frac{8}{9} \epsilon_b - \frac{8}{9}\epsilon_t,\\
		J_{\sigma,21} 	&= 80 a_1 + 80a_2 + 40 \epsilon_b - 40\epsilon_t,\\
		J_{\sigma,22} 	&= - 112 a_1 - 112 a_2 - 256 A_{\texttt{WL}} - 256 A_{\texttt{WR}} - \frac{104}{3} \epsilon_b + \frac{200}{3} \epsilon_t,\\
		J_{\sigma,23} 	&= - \frac{32}{3}A_{\texttt{WL}} - \frac{32}{3}A_{\texttt{WR}} + \frac{8}{9} \epsilon_b - \frac{8}{9}\epsilon_t,\\
		J_{\sigma,31} 	&= - \frac{32}{3}A_{\texttt{WL}} - \frac{32}{3}A_{\texttt{WR}} + \frac{8}{9} \epsilon_b - \frac{8}{9}\epsilon_t,\\
		J_{\sigma,32} 	&= - \frac{32}{3}A_{\texttt{WL}} - \frac{32}{3}A_{\texttt{WR}} + \frac{8}{9} \epsilon_b - \frac{8}{9}\epsilon_t,\\
		J_{\sigma,33} 	&= - \frac{5}{9} a_1 - \frac{5}{9}a_2 - \frac{32}{9}A_{\texttt{WL}} - \frac{32}{9}A_{\texttt{WR}} + \frac{1}{54}\epsilon_b - \frac{1}{54} \epsilon_t.
	\end{align*}

	With these pieces we can express the criterion for being a critical point of the Ratio Cut, incorporating linear terms in the parameters, as
	\begin{align*}
		(J+J_{\sigma}) \begin{pmatrix} q - \frac{1}{2}\\ p - \frac{1}{2}\\ \theta \end{pmatrix} &= L(\sigma)\\
		&:= a_1 \begin{pmatrix} 8 \\ -8 \\ \frac{2}{3}\end{pmatrix} + a_2 \begin{pmatrix}-8\\ 8\\ -\frac{2}{3}\end{pmatrix} + a_3 \pmat{-8 \\ 8 \\ \frac{2}{3}} + A_{\texttt{WL}} \begin{pmatrix}32 \\ 32 \\ \frac{8}{3} \end{pmatrix} + A_{\texttt{WR}} \begin{pmatrix}-32\\ -32 \\ -\frac{8}{3}\end{pmatrix}\\
		&\quad  + a_1a_1 \pmat{-32 \\ 32 \\ - \frac{4}{3}} + a_1a_2 \pmat{0 \\ 0 \\ 0} + a_1a_3 \pmat{32 \\ -32 \\ 0} \\
		&\quad  + a_2a_2 \pmat{32 \\ -32 \\ \frac{4}{3}} + a_2a_3 \pmat{0 \\ 0 \\ -\frac{4}{3}} + a_3a_3 \pmat{ - 32 \\ 32 \\ \frac{4}{3} } \\
		&\quad 	+ a_1A_{\texttt{WL}} \pmat{ -128 \\ -64 \\ -8 } + a_1 A_{\texttt{WR}} \pmat{ 0 \\ 64 \\ - \frac{8}{3} } + a_1 \epsilon_t \pmat{ \frac{8}{3} \\ -8 \\ - \frac{1}{9} } + a_1 \epsilon_b \pmat{ - \frac{8}{3} \\ 8 \\ \frac{1}{9} } \\
		&\quad 	+ a_2 A_{\texttt{WL}} \pmat{ 0 \\ -64 \\ \frac{8}{3} } + a_2 A_{\texttt{WR}} \pmat{ 128 \\ 64 \\ 8 } + a_2 \epsilon_t \pmat{ - \frac{8}{3} \\ 8 \\ \frac{1}{9} } + a_2 \epsilon_b \pmat{ \frac{8}{3} \\ - 8 \\ -\frac{1}{9} } \\
		&\quad  + a_3 A_{\texttt{WL}} \pmat{ 64 \\ 0 \\ -\frac{8}{3} } + a_3 A_{\texttt{WR}} \pmat{ -64 \\ - 128 \\ -8 } + a_3 \epsilon_t \pmat{ -8 \\ \frac{8}{3} \\ -\frac{1}{9} } + a_3 \epsilon_b \pmat{ 8 \\ -\frac{8}{3} \\ \frac{1}{9} } \\
		&\quad  + A_{\texttt{WL}} A_{\texttt{WL}} \pmat{ -512 \\ -512 \\ -\frac{128}{3}  } + A_{\texttt{WL}} A_{\texttt{WR}} \pmat{ 0 \\ 0 \\ 0 } + A_{\texttt{WR}} A_{\texttt{WR}} \pmat{ 512 \\ 512 \\ \frac{128}{3}  } \\
		&\quad  + A_{\texttt{WL}} \epsilon_t \pmat{\frac{32}{3} \\ \frac{32}{3} \\ - \frac{4}{9}  } + A_{\texttt{WL}} \epsilon_b \pmat{ - \frac{32}{3} \\ - \frac{32}{3} \\ \frac{4}{9} } \\
		&\quad  + A_{\texttt{WR}} \epsilon_t \pmat{ - \frac{32}{3} \\ \frac{32}{3} \\ \frac{4}{9} } + A_{\texttt{WR}} \epsilon_b \pmat{ \frac{32}{3} \\ \frac{32}{3} \\ -\frac{4}{9}  } \\
		&\quad  + \epsilon_t \epsilon_t \pmat{0\\0\\0   } + \epsilon_t \epsilon_b \pmat{0\\ 0\\ 0  } + \epsilon_b \epsilon_b \pmat{0\\ 0 \\ 0  }\\
		&\quad + \text{ cubic terms in the parameters}.
	\end{align*}
	}
	\subsection*{Using the Implicit Function Theorem}
	
	Suppose $v$ is a critical triple of values for the Ratio Cut, i.e. $(J+J_{\sigma})v = 0$. Then near $\sigma = 0$, we can solve for $v$ in terms of the parameters in $\sigma$:
	\begin{align*}
		v 	&= J^{-1} L(\sigma)\\
			&= \pmat{\frac{1}{12}(a_1 - a_2 - 2a_3) + (A_{\texttt{WL}} - A_{\texttt{WR}}) \\ -\frac{1}{12} (2a_1 - 2a_2 - a_3) + (A_{\texttt{WL}} - A_{\texttt{WR}}) \\ a_1 - a_2 + a_3} + \text{quadratic terms in the parameters}.
	\end{align*}
	We will not write out the full $J^{-1}L(\sigma)$ here, though it is used for the approximations later and its form becomes clear in the representations below.

	To summarize, we have shown in this section that, near the critical point $(p,q,\theta) = \left( \frac{1}{2}, \frac{1}{2}, 0 \right)$ of the ratio cut, and for $\sigma=(a_1, a_2, \epsilon_b, \epsilon_t, A_{\texttt{WL}}, A_{\texttt{WR}})$ near $0$, we have 
	\begin{align*}
		\begin{pmatrix} q - \frac{1}{2} \\ p - \frac{1}{2} \\ \theta \end{pmatrix} = \pmat{\frac{1}{12}(a_1 - a_2 - 2a_3) + (A_{\texttt{WL}} - A_{\texttt{WR}}) \\ -\frac{1}{12} (2a_1 - 2a_2 - a_3) + (A_{\texttt{WL}} - A_{\texttt{WR}}) \\ a_1 - a_2 + a_3}+ \text{quadratic terms in the parameters}.
	\end{align*}

	\section{Are our domains generic?}
	
	Since Ratio Cuts are circular, it seems more natural to prove Theorem 2 for circular, instead of parabolic, boundary curves. Using circular arcs, however, proved to be intractable for our computational methods, whereas parabolic arcs could be used. This section shows that such approximations only incur third order errors, and thus Theorem 2 is still applicable for circular boundary curves. This first lemma shows this approximation is valid near the intersection of the cut and the original domain's boundary. 
	
	\begin{lemma}
	Provided $I(Q)$ defined in \eqref{Definition I(Q)} is sufficiently small, we may approximate the top and bottom curves by parabolas in a neighborhood of the Ratio Cut.
	\end{lemma}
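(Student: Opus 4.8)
The plan is to invoke Taylor's theorem, exploiting that the functional $I(Q)$ bounds the third derivatives of the bounding curves outright. By \eqref{Definition I(Q)}, the hypothesis $I(Q) \leq \varepsilon_1$ forces $\sup_s |\gamma_j'''|(s) \leq \varepsilon_1$ for each $j$ (and likewise $\sup_s|\gamma_j'|(s), \sup_s|\gamma_j''|(s) \leq \varepsilon_1$). Since the second-order Taylor polynomial of a $C^3$ function is exactly a parabola, and since its remainder is governed by the third derivative, the smallness of $I(Q)$ immediately makes the parabolic model accurate to third order in the distance from the expansion point. I would take the expansion point to be the axis of symmetry $x = 1/2$ of the reference rectangle, which is where the Ratio Cut concentrates.

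First I would recall, from the localization established in Theorem \ref{thm1} (and sharpened by the Implicit Function Theorem computation of Section \ref{arcs}), that the Ratio Cut meets each of the top and bottom curves $y = \gamma_j(x)$, $j = 1,2$, at a point whose $x$-coordinate lies in a neighborhood of $x = 1/2$ whose radius tends to $0$ with $\varepsilon_1$. For each such $j$ I would then set
\[
P_j(x) = \gamma_j(\tfrac{1}{2}) + \gamma_j'(\tfrac{1}{2})\big(x - \tfrac{1}{2}\big) + \tfrac{1}{2}\gamma_j''(\tfrac{1}{2})\big(x - \tfrac{1}{2}\big)^2,
\]
a parabola, and apply Taylor's theorem with Lagrange remainder to obtain the pointwise bound
\[
\big|\gamma_j(x) - P_j(x)\big| \;\leq\; \tfrac{1}{6}\Big(\sup_s |\gamma_j'''|(s)\Big)\,\big|x - \tfrac{1}{2}\big|^3 \;\leq\; \tfrac{1}{6}\,\varepsilon_1\,\big|x - \tfrac{1}{2}\big|^3 .
\]
Restricted to the neighborhood of the cut, where $|x - 1/2|$ is small, this is a genuine third-order error, which is exactly the claimed approximation.

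The routine part is the pointwise estimate above; the substance of the statement lies in reconciling it with the decomposition used to set up $\mathrm{RC}$ in Section \ref{arcs}. Two consistency checks are needed. First, away from the cut the discrepancy $|\gamma_j - P_j|$ must be absorbable into the ``black-box'' regions of area $A_{\texttt{WL}}$ and $A_{\texttt{WR}}$; because the remainder bound holds on all of $[0,1]$ and yields $|\gamma_j - P_j| \leq \tfrac{1}{48}\varepsilon_1$ uniformly, these areas are themselves $O(\varepsilon_1)$ and hence admissibly small. Second, and more delicately, one must confirm that the cubic pointwise error feeds into the area and arc-length integrals defining $\mathrm{RC}$ only at an order strictly below the leading trapezoidal and parabolic deflections tracked in Section \ref{arcs}; this is precisely the assertion that the parabolic substitution perturbs the Ratio Cut by a third-order term, and it is what ultimately licenses applying Theorem \ref{thm2} to domains whose true boundary arcs are circular. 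I expect the bookkeeping in this second check to be the main obstacle, since it requires propagating the cubic curve error through the nonlinear functional $\mathrm{RC}$ while keeping track of which monomials in the small parameters it can and cannot affect.
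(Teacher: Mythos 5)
Your proposal is correct and matches the paper's own argument: both Taylor-expand the top and bottom curves to second order near the cut location $p=q=\tfrac12$ (using the $C^3$ control supplied by $I(Q)$ to get a cubic remainder) and absorb the discrepancy away from the cut into the modified left and right wing regions. The propagation of the cubic error through the Ratio Cut functional, which you flag as the delicate step, is likewise only asserted in the paper's proof and deferred to the subsequent lemmas of that section.
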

	
	\begin{proof}
	The small $I(Q)$ makes sure the domain is approximately trapezoidal with a, for example, $1:2$, aspect ratio, and that the top and curves are $C^3$ with small oscillations on a uniform spatial scale.  Hence, Taylor expanding the top and bottom curves out to $2$nd order on this scale near $p=1/2, q = 1/2$, we can create a parabolic approximation that is accurate up to $O( |p-\frac12|^3, |q-\frac12|^3)$ in this region. Potentially modifying the left and right wings in order to ensure the wings of domain have the appropriate error outside this uniform neighborhood, we see that these curvilinear trapezoids will have ratio cuts that are indeed approximated up to lower order terms in the same fashion as our exact parabolic trapezoid calculations.   
	\end{proof}
		
	In practice, we are given a domain $Q$ whose top and bottom boundary curves are more general to start, and upon iteration of our domains we are most interested when sides are given by circular arcs. To handle smooth enough domains of this form in full generality, we can construct a parabolic trapezoid by: rescaling the domain to fit our aspect ratio; locally using a parabolic approximation for the top and bottom curves; and finally cutting off left and right portions of $\Omega$, which are treated as black-box regions. The Ratio Cut function sees these regions as general wing areas, denoted $A_{\texttt{WL}}$ and $A_{\texttt{WR}}$ for the left- and right-wing areas respectively.  The measure of distance from the rectangle, $I(Q)$, in which we measure the deflection of our domains, allows us to approximate our quadrilateral by a parabolic curve with errors that are higher order in the parameter space. 
		
	\subsection{Can a circular arc be parabolically approximated?}
	
	Optimal cuts for generic parabolic trapezoids are circular. Because of this, we would expect the top and bottom boundary curves of the domains in the next iteration we have been considering to be circular, but not parabolic. Here we show that parabolic arcs approximate circular arcs well, and that the ratio cut for circular boundary curves differs from the ratio cut for parabolic boundary curves at third order and higher. Thus, working with parabolic curves does not incur any significant error in the ratio cut series expansion.
	
	\begin{lemma}
	   Up to higher order error terms in the curve parameters, we may approximate the top and bottom circular arc curves as paraboloids encompassing the correct area.
	   \end{lemma}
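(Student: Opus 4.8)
The plan is to show that a circular arc of small opening angle, when reparametrized as the graph of a function over the chord joining its endpoints, agrees with a parabola through the same endpoints and enclosing the same area up to errors that are cubic in the relevant curve parameters (the opening angle $\theta$, equivalently the ratio of chord length to radius). Since the entire Ratio Cut expansion derived in Section \ref{arcs} depends on the top and bottom curves only through finitely many quantities---their values, first and second derivatives at the intersection points $p$ and $q$, together with the enclosed areas entering $A_L$, $A_R$---it suffices to match the circular arc and its parabolic proxy to second order at the relevant point and in total area. The claim then follows because any discrepancy feeds into $RC$ at an order strictly higher than the linear-in-$\sigma$ terms that govern the location of the cut.

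\textbf{First I would} set up coordinates so that the circular arc has endpoints symmetric about the origin, with the chord along the $x$-axis, and write the arc as a graph $y_{\mathrm{circ}}(x) = r - \sqrt{r^2 - x^2}$ (after translating so the chord is at height $0$), valid for $|x| \le \ell/2$ where $\ell$ is the chord length. Taylor expanding in $x/r$ gives $y_{\mathrm{circ}}(x) = \frac{x^2}{2r} + \frac{x^4}{8r^3} + O((x/r)^6)$. The matching parabola is $y_{\mathrm{par}}(x) = \frac{x^2}{2r} + c$, where the constant $c$ (of size $O(\ell^4/r^3)$) is fixed by demanding equal enclosed area over the chord; integrating both graphs over $[-\ell/2, \ell/2]$ shows the area defect is $\int (y_{\mathrm{circ}} - y_{\mathrm{par}})\,dx = O(\ell^5/r^3)$ before correction, so $c = O(\ell^4/r^3)$ absorbs it. Hence $y_{\mathrm{circ}}(x) - y_{\mathrm{par}}(x) = O((x/r)^4) = O(\ell^4/r^3)$ pointwise, and the first derivatives differ by $O(\ell^3/r^3)$ while the curvatures agree exactly at the center.

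\textbf{Next I would} translate this local agreement into the language of the Ratio Cut functional. Recalling from the isoperimetric normalization that the curve parameters $\epsilon_t, \epsilon_b$ encode the leading (quadratic) coefficient of each boundary curve and that these enter the expansion of $RC$ linearly, the point is that replacing a circular boundary arc by its area-matched parabola perturbs each of $A_L$, $A_R$, $|\Gamma|$, and the derivative data at $p, q$ only by terms of size $O(\ell^4/r^3)$, which is cubic in the natural small parameter $\ell/r$. Since the parabola is chosen to reproduce the quadratic coefficient and the enclosed area exactly, the surviving differences live at the quartic level in $x$ and thus contribute to $RC(q,p,\theta;\sigma)$ only at an order beyond the linear and quadratic terms tabulated above. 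Consequently the solution $(q - \tfrac12, p - \tfrac12, \theta)$ produced by the Implicit Function Theorem is unchanged to the order at which we have computed it.

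\textbf{The main obstacle} is bookkeeping rather than conceptual: one must verify that the area-matching constant $c$ and the residual pointwise discrepancy genuinely enter $RC$ only through cubic-or-higher combinations, rather than contaminating one of the linear coefficients through the nonlinear dependence of $A_L, A_R$ on the curve. This requires checking that the wing areas $A_{\texttt{WL}}, A_{\texttt{WR}}$, into which any boundary mismatch can be shunted, themselves appear in $L(\sigma)$ only linearly with the controlled coefficients computed above, so that an $O(\ell^4/r^3)$ perturbation of the curve is subsumed into an $O(\ell^4/r^3)$ perturbation of the parameter vector $\sigma$ and hence of the cut. I would make this precise by bounding the Fréchet derivative of the map (boundary curve) $\mapsto RC$ and observing it is continuous in the $C^2$ norm near the parabolic configuration, so that a $C^2$-small, area-preserving, curvature-matched perturbation moves the optimal cut by a correspondingly small, higher-order amount.
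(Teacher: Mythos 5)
Your proposal is correct and follows essentially the same route as the paper: replace each circular arc by a parabola whose quadratic coefficient matches the arc's curvature and whose enclosed area is matched (the paper fixes this by choosing $\epsilon_t = -\tfrac{1+(a_1-a_2)^2}{2}\theta_t$, $\epsilon_b = \tfrac{1+a_3^2}{2}\theta_b$, which agrees with your $1/(2r)$ normalization to the relevant order), then verify that the pointwise, area, and chord-length discrepancies are all cubic in the small parameters, so they enter the Ratio Cut expansion only beyond the linear and quadratic terms. The paper carries out your final ``bookkeeping'' step concretely---using the linearity of the Stokes line integrals to isolate the correction terms in $A_L$, $A_T$, and $|\Gamma|$ and Taylor expanding $RC^C - RC^P$---rather than via an abstract Fr\'echet-derivative bound, but the substance is the same.
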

	   
	   The remainder of this section will be devoted to the proof of this Lemma.
	
	\subsubsection{Approximating Circular Arcs} 
	First let us consider the most important example for our iterated domain conjecture, in which the top and bottom boundaries are circular arcs.  In what follows, the top and bottom parabolic curves will be, respectively,
	\begin{align*}
		y_T^P(x) 	&= \epsilon_t x^2 + (a_1 - a_2 - \epsilon_t)x+ \left( a_1 + \frac{1}{2} \right),\\
		y_B^P(x) 	&= \epsilon_b x^2 + (a_3 - \epsilon_b)x.
	\end{align*}
	
	The parameters $a_1, a_2, a_3$ denote horizontal perturbations from the top-left, top-right, and bottom-right vertices of a rectangle, so our parabolic trapezoid has vertices $(0,0)$, $\left(0, \frac{1}{2}+a_1 \right)$, $\left(1, \frac{1}{2}+a_2 \right),$ and $\left(1, a_3 \right)$. The terms $\epsilon_t$ and $\epsilon_b$ are curvature parameters that specify the shape of the two boundary curves.
	
	The circular arcs are given by the formulas
	\begin{align*}
		y_T^C(x) &= c_{y,t} + \sqrt{r_t^2 - (x-c_{x,t})^2}, \\
		y_B^C(x) 	&= c_{y,b} - \sqrt{r_b^2 - (x-c_{x,b})^2},
	\end{align*}
	where $r_i$ and $(c_{x,i},c_{y,i})$ are the radius and center of the corresponding top ($i=T$) or bottom ($i=B$) circle:
	
	\begin{align*}
		r_t &= \frac{\sqrt{1 + (a_1-a_2)^2}}{2 \sin(\frac{\theta}{2})},\\
		r_b &= \frac{\sqrt{1+a_3^2}}{2 \sin(\frac{\theta}{2})},
	\end{align*}
	and the center point coordinates are found from the systems of equations (for $i=T$ and $i=B$)
	\begin{align*}
		&\begin{cases}
		(c_{x,t})^2 + (\frac{1}{2} + a_1 - c_{y,t})^2 	= r_t^2,\\
		(1-c_{x,t})^2 + (\frac{1}{2}+a_2 - c_{y,t})^2 	= r_t^2,  
		\end{cases}\\
		\text{and } 
		&\begin{cases}
		(c_{x,b})^2 + (c_{y,b})^2 	= r_b^2,\\
		(1-c_{x,b})^2 + (a_3 - c_{y,b})^2 	= r_b^2.
		\end{cases}
	\end{align*}
	
	As an explicit example, expressing $y_B^C$ in terms of the parameters gives
	\begin{align*}
		y_B^C(x) 	&= \frac{a_3}{2} + \cot\left( \frac{\theta_b}{2} \right) - \left( \frac{1}{1+a_3^2} \left( a_3^4 - (1-2x)^2 - 4a_3^2 (x-1) x \right. \right. \\
		&\quad \left.\left. + 2a_3 (1+a_3^2) (1-2x) \cot\left( \frac{\theta_b}{2} \right) + (1+a_3^2) \csc^2\left( \frac{\theta_b}{2} \right)  \right) \right)^{\frac{1}{2}}.
	\end{align*}
	
	The next lemma shows how to choose $\epsilon_t$ and $\epsilon_b$ so that the parabolic trapezoid's area approximates the circular trapezoid's area up to third order. These curvature terms are chosen so that the quadrilateral's parabolic caps have, up to third order terms, the same area as corresponding circular arcs.
	
	\begin{lemma}
	Let $l_T(x) = (1-t)\left(\frac{1}{2}+a_1 \right) + t \left(\frac{1}{2}+a_2 \right)$, and write 
	\begin{equation}
	S_T^C = \{ (x,y) \colon l_T(x) \leq y \leq y_T^C(x) \}, \,\, S_T^P(x) = \{ (x,y) \colon l_T(x) \leq y \leq y_T^P(x) \}
	\end{equation} 
	for the regions bounded below by the straight line $\{\{ t, l_T(x) \} \colon t \in [0,1] \}$ and above by the curves $\{\{t, y_T^i(x) \} \colon x\in[0,1], i = C \text{ or } P \}$ respectively. Similarly, let $l_B(x) = t a_3,$ and write
	 \begin{equation}
	S_B^C = \{ (x,y) \colon l_B(x) \leq y \leq y_B^C(x) \},\,\, S_B^P(x) = \{ (x,y) \colon l_B(x) \leq y \leq y_B^P(x) \}
	\end{equation} 
	for the regions bounded above by the straight line $\{\{ t, l_B(x) \} \colon t \in [0,1] \}$ and below by the curves $\{\{t, y_B^i(x) \} \colon x\in[0,1], i = C \text{ or } P\}$ respectively.
	
		For $\epsilon_t = -\frac{1+(a_1-a_2)^2}{2}\theta_t$ and $\epsilon_b = \frac{1+a_3^2}{2}\theta_b$, we have
		\begin{align*}
			|S_T^C| -|S_T^P| 	&= O(\theta_t^3)
			\end{align*}
			and
			\begin{align*}
						|S_B^C| - |S_B^P| 	&= O(\theta_b^3) .
		\end{align*}
	\end{lemma}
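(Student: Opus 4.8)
The plan is to compute each of the four cap areas as an explicit function of its opening angle and then compare the circular and parabolic versions side by side, exploiting the fact that on each side of the trapezoid the circular arc and the parabola subtend the \emph{same} chord. Write $L_T^2 = 1 + (a_1-a_2)^2$ and $L_B^2 = 1 + a_3^2$ for the squared chord lengths of the top and bottom, which are common to both the circular and parabolic curves since all four curves interpolate the same vertices. The structural point that makes the argument work is that the parabolic cap area is \emph{exactly} linear in its curvature parameter $\epsilon$, whereas the circular segment area has a genuine Taylor expansion in the opening angle; the prescribed values of $\epsilon_t$ and $\epsilon_b$ are precisely those that match the two leading terms, so that only the cubic tail of the circular expansion survives the subtraction.

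First I would compute the circular cap. Using the circular-segment area formula $\tfrac{R^2}{2}(\theta-\sin\theta)$ recorded above together with the radius relations $r_t = L_T/(2\sin(\theta_t/2))$ and $r_b = L_B/(2\sin(\theta_b/2))$, a segment subtending central angle $\theta$ on a chord of length $L$ has area
\[
|S^C| = \frac{r^2}{2}(\theta - \sin\theta) = \frac{L^2}{8\sin^2(\theta/2)}(\theta - \sin\theta).
\]
Taylor expanding $\theta - \sin\theta = \tfrac{\theta^3}{6} + O(\theta^5)$ and $\sin^2(\theta/2) = \tfrac{\theta^2}{4}\bigl(1 - \tfrac{\theta^2}{12} + O(\theta^4)\bigr)$ and dividing, one finds $|S^C| = \tfrac{L^2\theta}{12} + \tfrac{L^2\theta^3}{360} + O(\theta^5)$. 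The crucial features are that the constant term vanishes and, more importantly, that there is \emph{no} $\theta^2$ term, so the leading behavior is exactly $\tfrac{L^2\theta}{12}$ with the first correction already at order $\theta^3$.

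Next I would compute the parabolic cap, which is elementary and exact. Subtracting the chord (the affine interpolant of the endpoint heights) from each parabola, the affine parts cancel by construction and the difference collapses to $y_T^P(x) - l_T(x) = \epsilon_t\,x(x-1)$ on top and $y_B^P(x) - l_B(x) = \epsilon_b\,x(x-1)$ on the bottom. Since $\int_0^1 |x(x-1)|\,dx = \tfrac16$, this gives $|S_T^P| = |\epsilon_t|/6$ and $|S_B^P| = |\epsilon_b|/6$ exactly. Substituting the prescribed $\epsilon_t = -\tfrac{L_T^2}{2}\theta_t$ and $\epsilon_b = \tfrac{L_B^2}{2}\theta_b$ yields $|S_T^P| = \tfrac{L_T^2\theta_t}{12}$ and $|S_B^P| = \tfrac{L_B^2\theta_b}{12}$, matching the circular caps exactly at linear order. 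Subtracting the two expansions, the $\tfrac{L^2\theta}{12}$ terms cancel identically, leaving $|S_T^C| - |S_T^P| = \tfrac{L_T^2\theta_t^3}{360} + O(\theta_t^5) = O(\theta_t^3)$ and similarly $|S_B^C| - |S_B^P| = O(\theta_b^3)$, as claimed.

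The computations are routine Taylor expansions, so the only real care — and the step I would treat as the main obstacle — is the bookkeeping of signs and orientations ensuring that both quantities are genuine planar areas measured over the same chord. Concretely, one must check that the sign of $\epsilon$ makes the parabola bulge to the same side as the circular arc (the prescribed $\epsilon_t<0$ places the top parabola above its chord, while $\epsilon_b>0$ places the bottom parabola below its chord), and that the length $L$ entering the circular-segment formula is exactly the straight-line distance between the two vertices used to form the parabola's affine part. Once these conventions are fixed, the cancellation of the linear terms is forced by the definition of $\epsilon_t,\epsilon_b$, and the absence of a $\theta^2$ term in the circular expansion upgrades the discrepancy from the naive $O(\theta^2)$ to the claimed $O(\theta^3)$.
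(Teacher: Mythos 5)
Your proposal is correct and follows essentially the same route as the paper's own proof: expand the circular segment area $\frac{r^2}{2}(\theta-\sin\theta)$ with $r = L/(2\sin(\theta/2))$ to get $\frac{L^2\theta}{12} + \frac{L^2\theta^3}{360} + \cdots$, compute the parabolic cap exactly as $\pm\epsilon/6$ via the reduction of parabola-minus-chord to $\epsilon\, x(x-1)$, and observe that the prescribed $\epsilon_t,\epsilon_b$ cancel the linear terms. Your explicit remarks that the circular expansion has no $\theta^2$ term and that the signs of $\epsilon_t,\epsilon_b$ must place each parabola on the same side as its arc are sound refinements of bookkeeping the paper leaves implicit, but they do not constitute a different argument.
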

	
	\begin{proof}
		Both $|S_T^C|$ and $|S_B^C|$ can be found using basic trigonometry: the area of a circular segment with radius $r$ and angle $\theta$ is $\frac{1}{2} r^2 (\theta - \sin(\theta))$. In our case,
		\begin{align*}
			|S_T^C| 	&= \frac{1}{2} \left(\frac{\sqrt{1+(a_1-a_2)^2}}{2\sin(\frac{\theta_t}{2})} \right)^2 (\theta_t - \sin(\theta_t) )\\
				&= \frac{1+(a_1-a_2)^2}{12}\theta_t + \frac{1+(a_1-a_2)^2}{360}\theta_t^3 + O(\theta_t^4),\\
			|S_B^C| 	&= \frac{1}{2} \left(\frac{\sqrt{1+a_3^2}}{2\sin(\frac{\theta_b}{2})} \right)^2(\theta_b - \sin(\theta_b))\\
				&= \frac{1+a_3^2}{12} \theta_b + \frac{1+a_3^2}{360} \theta_b^3 + O(\theta_b^4).\\
		\end{align*}
		
		For $|S_T^P|$ and $|S_B^P|$, we integrate:
		\begin{align*}
			|S_T^P| 	&= \int_{0}^1 y_T^P(x) - l_T(x)dx\\
				&= \int_0^1 \epsilon_t x^2 + (a_2 - a_1 - \epsilon_t)x + \left( \frac{1}{2} + a_1 \right) - (1-x)\left( \frac{1}{2}+a_1 \right) - x\left(\frac{1}{2}+a_2 \right) dx\\
				&= -\frac{\epsilon_t}{6}
				\end{align*}
		and 
		\begin{align*}
		 |S_B^C| 	&= \int_0^1 l_B(x) - y_B^C(x) dx\\
			&= \int_0^1 a_3 x -\epsilon_b x^2 - (a_3 - \epsilon_b)x dx
			= \frac{\epsilon_b}{6}.
		\end{align*}
		We want to choose constants $C_t$ and $C_b$ so that setting $\epsilon_t = C_t \theta_t$ and $\epsilon_b = C_b \theta_b$ gives us $|S_T^C|-|S_T^P| = O(\theta_t^3)$ and $|S_B^C| - |S_B^P| = O(\theta_b^3)$. Indeed, letting $\epsilon_t = -\frac{1+(a_1-a_2)^2}{2}\theta_t$ and $\epsilon_b = \frac{1+a_3^2}{2}\theta_b$ gives us this result.
	\end{proof}
	
	Since the parabolic and circular trapezoids only differ in the type of caps on each, making these choices for $\epsilon_t$ and $\epsilon_b$ ensures the area of the circular and parabolic trapezoids are very well approximated. In addition, the boundary curves agree up to third order, and so the left areas and ratio cut lengths also agree to at least third order (in the parameters).
	\begin{cor} 
		Setting $\epsilon_t = -\frac{1+(a_1-a_2)^2}{2}\theta_t$ and $\epsilon_b = \frac{1+a_3^2}{2}\theta_b$, we get the following approximations:
		\begin{align*}
		y_T^C(x) - y_T^P(x) &= \frac{1}{48}(1+(a_1-a_2)^2) \theta_t^2 \left( O(a_1) + O(a_2) + O(\theta_t)  \right),\\
		y_B^C(x) - y_B^P(x) &= \frac{1}{48} (1+a_3^2) \theta_b^2 (O(a_3) + O(\theta_b)).
		\end{align*}
		
		Moreover, we have
		\begin{align*}
		A_T^C 	&= A_T^P + O(\theta_t^3) + O(\theta_b^3),\\
		\text{and } A_L^C 	&= A_L^P + O(\theta_t^2)(O(a_1) + O(a_2)) + O( \theta_b^2)O(a_3),
		\end{align*}
		where $A_T^i$, $A_L^i$ is the total area and left area respectively of the parabolic ($i=P$) or circular ($i=C$) trapezoid, and
		\begin{align*}
			\|(p,y_T^C(p)) - (q,y_B^C(q)) \| 	&= \|(p,y_T^P(p)) - (q,y_B^P(q))\| + O(\theta_t^3)+O(\theta_b^3).
		\end{align*}
	\end{cor}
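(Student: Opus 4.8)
The plan is to reduce everything to two inputs: the area-matching choice of $\epsilon_t,\epsilon_b$ established in the previous lemma, and a second-order Taylor expansion of the explicit circular arcs $y_T^C,y_B^C$ in the small parameters $\theta_t,\theta_b$ (with $a_1,a_2,a_3$ carried along). I treat the top curve; the bottom is identical after reflecting. Both $y_T^C$ and $y_T^P$ interpolate the same two vertices, so I work with their deviations from the common chord $l_T$. For the parabola this deviation is exactly $-\epsilon_t\,x(1-x)=\tfrac12\big(1+(a_1-a_2)^2\big)\theta_t\,x(1-x)$, which is linear in $\theta_t$ because $\epsilon_t=-\tfrac12(1+(a_1-a_2)^2)\theta_t$. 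For the circular arc I expand $\sqrt{r_t^2-(x-c_{x,t})^2}-l_T(x)$ using $r_t=\tfrac{\sqrt{1+(a_1-a_2)^2}}{2\sin(\theta_t/2)}$ and the center equations; the $O(\theta_t)$ term of this expansion is precisely $\tfrac12(1+(a_1-a_2)^2)\theta_t\,x(1-x)$ — this agreement is exactly what the area-matching condition was designed to force. Hence the two curves coincide to first order in $\theta_t$, and the first discrepancy is $O(\theta_t^2)$.

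The key structural point, which drives the different exponents appearing in the statement, is that this $O(\theta_t^2)$ discrepancy is not generic: it comes from the mismatch between the circle's sagitta (measured perpendicular to the chord and parameterized by arclength) and the parabola's vertical offset (parameterized by $x$). Converting one to the other introduces the chord slope $a_2-a_1$ together with an antisymmetric factor $(1-2x)$, so the $O(\theta_t^2)$ part of $y_T^C-y_T^P$ is proportional to $(1-2x)\cdot O(a_1,a_2)$, while the genuinely symmetric remainder is $O(\theta_t^3)$. This yields the claimed pointwise bound $y_T^C-y_T^P=\tfrac{1}{48}(1+(a_1-a_2)^2)\theta_t^2\big(O(a_1)+O(a_2)+O(\theta_t)\big)$, and the analogous bound for the bottom. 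The total-area identity is then immediate from the previous lemma alone: the circular and parabolic trapezoids share the same base quadrilateral and the same two chords, and differ only in their top and bottom caps, whose areas were matched to $O(\theta_t^3)$ and $O(\theta_b^3)$; thus $A_T^C=A_T^P+O(\theta_t^3)+O(\theta_b^3)$.

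For the left area I decompose $A_L$ into $\int_0^p y_T\,dx-\int_0^q y_B\,dx$ plus the elementary circular cut-cap, hold $(p,q,\theta)$ fixed, and integrate the pointwise estimate. The antisymmetric $(1-2x)$ part integrates over $[0,p]$ to a quantity proportional to $p(1-p)$ times $\theta_t^2\,O(a_1,a_2)$, producing the stated $O(\theta_t^2)(O(a_1)+O(a_2))$ contribution (and $O(\theta_b^2)O(a_3)$ from the bottom); the symmetric part, together with the cut-cap discrepancy, contributes only at orders $\theta_t^3,\theta_b^3$, consistent with the total-area bound. Finally, the cut-length identity exploits the same antisymmetry: the two chords $\overline{\mathbf p\mathbf q}$ have identical $x$-endpoints $p,q$ and differ only through the heights $y_T^C(p)-y_T^P(p)$ and $y_B^C(q)-y_B^P(q)$; since the cut point sits at $p,q\approx\tfrac12$, where the factor $(1-2x)$ vanishes, these height discrepancies are genuinely $O(\theta_t^3)$ and $O(\theta_b^3)$, and as the vertical separation stays bounded below near the height of the domain, the Euclidean distance depends smoothly on the heights and inherits the error $O(\theta_t^3)+O(\theta_b^3)$. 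The main obstacle is the bookkeeping in the first paragraph: pushing the circular-arc expansion to second order in $\theta_t$ while carrying $a_1,a_2$ through the implicitly-defined center $(c_{x,t},c_{y,t})$, and verifying rigorously that the $O(\theta_t^2)$ discrepancy carries the $(1-2x)\cdot O(a_1,a_2)$ form rather than an unsuppressed $O(\theta_t^2)$ piece; once this is confirmed, every assertion follows by integration and the already-established area lemma.
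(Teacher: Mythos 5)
Your proposal is correct, and it actually supplies the mechanism that the paper's own proof leaves implicit. The overall skeleton is the same — the $A_T^C$ identity is proved exactly as you do it (same base quadrilateral, caps matched by the preceding lemma) — but the paper disposes of the pointwise curve estimates and the cut-length estimate with the single remark that they ``come by Taylor expansions,'' and it handles $A_L$ by a different bookkeeping device: writing $y_j^C = y_j^P + r_j$ and exploiting linearity of the Green/Stokes line integral $\tfrac12\int x\,dy - y\,dx$ in the curve, so that only the corrections $r_j$, the discrepancy $l_{right}$ between the two chords joining $(q,y_B(q))$ to $(p,y_T(p))$, and the cap discrepancy need estimating. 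Your route integrates the pointwise discrepancy directly over $[0,p]$ and $[0,q]$; this is equivalent (your decomposition silently drops the thin triangle between chord and cut, but its circular-versus-parabolic difference is controlled by the height discrepancies at $p,q$ and is of higher order — exactly the term the paper calls $l_{right}$). What your argument buys that the paper's does not: identifying the second-order discrepancy as $\mathrm{const}\cdot(a_2-a_1)\,\theta_t^2\,x(1-x)(1-2x)$ — antisymmetric about $x=\tfrac12$ and proportional to the chord slope — is precisely what explains the pattern of exponents in the statement: $A_T$ and the cut length see only $O(\theta^3)$ errors because the antisymmetric part integrates to zero over $[0,1]$ and vanishes pointwise at $x=\tfrac12$ where the cut lands, while $A_L$ retains a genuine $O(\theta_t^2)\left(O(a_1)+O(a_2)\right)$ term because that part does not integrate to zero over $[0,p]$. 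Read literally, the paper's cut-length claim would not follow from a bare Taylor expansion without your cancellation at $x=\tfrac12$. Two trivial slips on your side, neither a gap: $\int_0^p x(1-x)(1-2x)\,dx = \tfrac12 p^2(1-p)^2$, so the surviving factor is $p^2(1-p)^2$ rather than $p(1-p)$; and for $p$ merely near $\tfrac12$ the cut length also picks up cross terms $O(\theta_t^2)\,O(a_1,a_2)\,O\!\left(p-\tfrac12\right)$, harmless because they are beyond third order in the collective parameters.
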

	
	The corollary justifies our use of only quadratic terms in the Ratio Cut Taylor expansion, since the parabolic and circular Ratio Cuts (i.e. Ratio Cut functions of parabolic or circular trapezoids) agree up to third order in the parameters of the domains, given the proper curvature term substitutions.
	
	\begin{proof}
		The parabolic and circular boundary curve approximations come by Taylor expansions.
		For the areas, recall that the parabolic and circular trapezoids differ only in their ``caps''. Thus, assuming $a_1<a_2$ and $a_3<0$, the total area of the circular trapezoid is
		\begin{align*}
			A_T^C 	&= \left(\frac{1}{2} + a_1\right) + \frac{1}{2} \left(a_2-a_1 \right) + \frac{1}{2}\left(-a_3 \right) + \area(S_T^C) + \area(S_B^C)\\
				&= \left(\frac{1}{2} + a_1\right) + \frac{1}{2} \left(a_2-a_1 \right) + \frac{1}{2}\left(-a_3 \right) + \area(S_T^P) + \area(S_B^P) + O(\theta_t^3) + O(\theta_b^3)\\
				&= A_T^P + O(\theta_t^3) + O(\theta_b^3).
		\end{align*}
		Write 
		\begin{equation}
		L^C := \|(p,y_T^C(p)) - (q,y_B^C(q))\|,\,\,L^P := \|(p,y_T^P(p)) - (q,y_B^P(q))\|; 
		\end{equation}
		these quantities are the lengths of the line segments connecting the intersection points of intersection between the ratio cut and boundary of the region. Expanding $L^C$ into a Taylor series in $\theta_b$ and $\theta_t$ gives 
		\begin{align*}
			L^C = L^P + O(\theta_b^3) + O(\theta_t^3).
		\end{align*}
		
		Finally, write 
		\begin{equation}
		\bar{y}_j^P(x) := y_j^P(x) + r_j(x), 
		\end{equation}
		where $r_j$ is quadratic in the parameters, for $j = T,B$. Let $\gamma_1$ be the left boundary segment of the (circular or parabolic) trapezoid, $\gamma_2$ the bottom boundary segment, $\gamma_3$ the right boundary segment, and finally $\gamma_4$ the top boundary segment, such that $\sum_i \gamma_i$ is oriented counter-clockwise.
		
		In general, if a function $y=f(x)$ is expressed as a sum $f(x) = g(x) + h(x)$, then in utilizing Stokes' theorem we get
		\begin{align*}
			\frac{1}{2} \int x dy - ydx 	&= \frac{1}{2} \int x d(g(x) + h(x)) - (g(x)+h(x)) dx\\
				&= \frac{1}{2} \int x dg + xdh - g dx - h dx\\
				&= \frac{1}{2} \int xdg - g dx + \frac{1}{2} \int x dh - h dx.
		\end{align*}
		In short, the line integrals involved in Stokes' theorem are linear in the input $y$.
		
		Since we are approximating the circular arcs as parabolic arcs plus correction terms, we only need to account for the pieces of the area line integral that incorporate the correction pieces. The left boundary curve is the same for both the parabolic and circular trapezoids, so splitting up the line integrals into parabolic pieces plus correction terms gives 
		\begin{align*}
			A_L^C &= \sum_i \frac{1}{2} \int_{\gamma_i} xdy - ydx + A_S^C\\
			&=\, A_L^P + \frac{1}{2} \int_{0}^{q} x d (r_b(x)) - r_b(x) dx\\
			&\quad + \frac{1}{2} \int_{q}^{p} l_{right,x}(x) d(l_{right,y}(x)) - l_{right,y}(x) d(l_{right,x}(x))\\
			&\quad + \frac{1}{2} \int x d(r_t(x)) - r_t(x) dx + O(\theta_b^3)+O(\theta_t^3),  
		\end{align*}
		where the $A_S^i$'s are the the circular segment areas bounded by the straight line connecting the two intersection points and the ratio cut, and 
		\begin{align*}
		(l_{right,x}(x),l_{right,y}(x))& \\
		:= \frac{p - x}{p - q}(p, y_T^C(p))& + \frac{x - q}{p-q}(q, y_B^C(q)) - \left( \frac{p - x}{p - q}(p, y_T^P(p)) + \frac{x - q}{p-q}(q, y_B^P(q)) \right)
		\end{align*} 
		is the error between the line segments connecting the parabolic arc intersection points and the circular arc intersection points. 
		
		Since the functions $r_i(x), l_{right,x}(x), l_{right,y}(x)$ are at least third order in the parameters, the line integrals involving these terms also agree up to third order. Finally, as indicated in the computation, the circular segment area $A_S^C$ is cubic in the parameters because the lengths of the line segments agree up to cubic terms. Thus, $A_L^C = A_L^P + O(\theta_b^2) O(a_3)+O(\theta_t^2)( O(a_1) + O(a_2))$. 
	\end{proof}
	
	Putting the above results together, we conclude that the ratio cuts corresponding to circular and parabolic boundary curves differ only in cubic terms of the parameters. Putting these pieces together, we see the same is true for the full ratio cut.
	
	\begin{lemma}
		Let $\Gamma^C, A_T^C,$ and $A_L^C$ be the ratio cut, total area, and left-area respectively of a domain constructed from circular boundary arcs. Then we can construct a new domain with parabolic arcs such that
		\begin{align*}
			RC^C - RC^P 	&= \frac{|\Gamma^C|}{(A_T^C-A_L^C)A_L^C} - \frac{|\Gamma^P|}{(A_T^P-A_L^P)A_L^P} = O(|r|^3),
		\end{align*}
		where $\Gamma^P, A_{T}^P,$ and $A_{L}^P$ are the ratio cut, total area, and left-area corresponding to a domain with parabolic boundary arcs.
	\end{lemma}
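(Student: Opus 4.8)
The plan is to treat this as a purely algebraic assembly step in which all of the analytic content has already been supplied by the preceding Corollary, so that the remaining work is to propagate the cubic comparisons through the arithmetic of a quotient. Write the Ratio Cut as $RC = N/D$, where $N = |\Gamma|$ is the length of the cut and $D = A_L A_R = A_L (A_T - A_L)$ is the product of the two areas. For a fixed choice of $(p,q,\theta)$ near $(1/2,1/2,0)$, together with the curvature substitutions $\epsilon_t = -\frac{1+(a_1-a_2)^2}{2}\theta_t$ and $\epsilon_b = \frac{1+a_3^2}{2}\theta_b$, the Corollary furnishes the three comparisons
\[
N^C = N^P + O(|r|^3), \quad A_T^C = A_T^P + O(|r|^3), \quad A_L^C = A_L^P + O(|r|^3),
\]
where I absorb the mixed contributions $O(\theta^2)O(a)$ into $O(|r|^3)$, each being a product of three first-order parameters.

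First I would deduce the corresponding estimate for the denominators. Since $A_R^C - A_R^P = (A_T^C - A_T^P) - (A_L^C - A_L^P) = O(|r|^3)$, writing $\delta_L = A_L^C - A_L^P$ and $\delta_R = A_R^C - A_R^P$, both $O(|r|^3)$, gives
\[
D^C - D^P = A_L^C A_R^C - A_L^P A_R^P = A_R^P \delta_L + A_L^P \delta_R + \delta_L \delta_R = O(|r|^3),
\]
the key point being that the unperturbed areas $A_L^P, A_R^P$ are $O(1)$ rather than small: near the critical configuration of aspect ratio close to $1:2$ each is approximately $1/4$.

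Next I would record the nondegeneracy of the denominator. Because the domain has aspect ratio near $1:2$ and every parameter is small, both $A_L^P$ and $A_R^P$ remain in a fixed neighborhood of $1/4$, so $D^P = A_L^P A_R^P$ and likewise $D^C$ are bounded above and below by positive constants independent of $|r|$; similarly $N^P = |\Gamma^P|$ stays near $1/2$ and is $O(1)$. With these in hand the result follows from the quotient identity
\[
RC^C - RC^P = \frac{N^C}{D^C} - \frac{N^P}{D^P} = \frac{(N^C - N^P) D^P - N^P (D^C - D^P)}{D^C D^P}.
\]
The numerator equals $O(1)\cdot O(|r|^3) - O(1)\cdot O(|r|^3) = O(|r|^3)$, while the denominator is bounded below by a positive constant, so $RC^C - RC^P = O(|r|^3)$.

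The main obstacle is not any individual estimate but the uniform control of the denominator: one must verify that $A_L A_R$ cannot degenerate as the parameters vary, so that division by it preserves the cubic bound. This is precisely where the hypothesis of aspect ratio near $1:2$ is used, keeping the cut well inside the domain and both pieces of comparable, bounded-below area; everything else is bookkeeping that carries the comparisons from the Corollary through the quotient.
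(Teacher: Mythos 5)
Your proposal is correct and follows essentially the same route as the paper: both arguments take the cubic-order comparisons of the cut length, total area, and left area from the preceding lemma and corollary, and propagate them through the quotient defining the Ratio Cut. The paper compresses this into a single "Taylor expanding the difference" step, while you make explicit the quotient identity and the non-degeneracy of the denominator $A_L A_R$ near aspect ratio $1:2$ — a detail the paper leaves implicit but which is indeed what makes the division harmless.
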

	
	\begin{proof} 
		By the previous lemma (and its corollary), we can write
		\begin{align*}
			A_T^C 	&= A_T^P + O(\theta_t^3)+O(\theta_b^3),\\
			A_L^C 	&= A_L^P + O(\theta_t^2)(O(a_1)+O(a_2)) + O(\theta_b^2)O(a_3),\\
			l^C 	&= l^P + O(\theta_b^3) + O(\theta_t^3),
		\end{align*}
		and so
		\begin{tiny}
		\begin{align*}
			RC^C 	&= \frac{|\Gamma^C|}{(A_T^C - A_L^C)A_L^C}\\
					&= \frac{ \frac{l^P + O(\theta_b^3) + O(\theta_t^3)}{\sinc(\frac{\theta}{2})}}{(A_T^P + O(\theta_t^3)+O(\theta_b^3) - A_L^P - O(\theta_t^2)(O(a_1)+O(a_2)) - O(\theta_b^2)O(a_3) )(A_L^P + O(\theta_t^2)(O(a_1)+O(a_2)) + O(\theta_b^2)O(a_3))}.
		\end{align*}
		\end{tiny}
		Taylor expanding the difference between $RC^P$ and $RC^C$ in terms of the parabolic approximations gives
		\begin{align*}
			RC^P - RC^C 	&= O(a_1\theta_t^2) + O(a_2\theta_t^2) + O(\theta_t^3) + O(a_3\theta_b^2) + O(\theta_b^3).
 		\end{align*}
 		Since the parabolic approximation incurs only third (and higher) order errors, so does the full ratio cut function as seen in the Taylor series expansion.
	\end{proof}
	
	This lemma lets us safely use the parabolic approximation when investigating the dependency of the ratio cut on the parameters.

		\section{Discussion and Conclusion}

	\subsection{Pictures comparing optimizers with approximations}
	
	First, let us demonstrate how well our approximation does to compute the Ratio Cut when the curves are parabolic arcs.  The set of plots in Figures \ref{compplot1}-\ref{compplot8} compare optimal values of $RC$ with the linear approximations in $p, q,$ and $\theta$.
	Each plot has the optimal ratio cut value (solid blue) for a family of parameters, indicated next to the $x$-axis. The circle parameters for the optimal cuts were computed numerically using Mathematica's {\texttt{FindMinimum}} command. The dot-dashed red curves correspond to the linear approximations for optimal $q, p,$ and $\theta$. The figure below each plot corresponds to an extremal configuration used to compare optimal and approximate ratio cut values. 
	All parameters $a_1, a_2, a_3, \epsilon_b, \epsilon_t, A_{\texttt{WL}}$, and $A_{\texttt{WR}}$ are zero unless otherwise stated.

		\subsection{Potential Applications}

Given domains that are "nearly rectangular" in the appropriate sense, we have shown here that spectral cuts determined by the $1$-Laplacian in geometric domains consist of area minimizing curves that reduce the distance to being rectangular through a series of qualitative geometric results and a careful asymptotic expansion in a parametrization of the cut near a point of (near) symmetry in the domain.  Given the isoperimetric nature of the cuts, we propose that $1$-Laplacian partitioning could be useful in data analysis, however we acknowledge that the numerical tools for computing the Ratio Cut would likely need to be improved computationally to make large data sets tractable.  Indeed, the isoperimetric nature of the Ratio Cut makes it clear to imagine how partitioning could occur, but also shows that approximating a continuum domain by a graph accurately requires great care due to the subtlety with which graph distances can fluctuate in point clouds.

Consistency of graph $1$-Laplacian spectral cuts was studied in \cite{trillos2016consistency}.  See \cite{Buhler_Hein:2009,Hein_Buhler:2010,BHcode} for current computational methods for computing $1$-Laplacian states using proximal-dual type numerical schemes.   Similar ideas have been proposed in \cite{bresson2013adaptive}.  See also \cite{nossek2018flows} for similar schemes originating from a diffusion-type equation, with the intent of forcing the equation to converge to a nonlinear eigenfunction.

	\begin{figure}
			\includegraphics[width=0.25\linewidth]{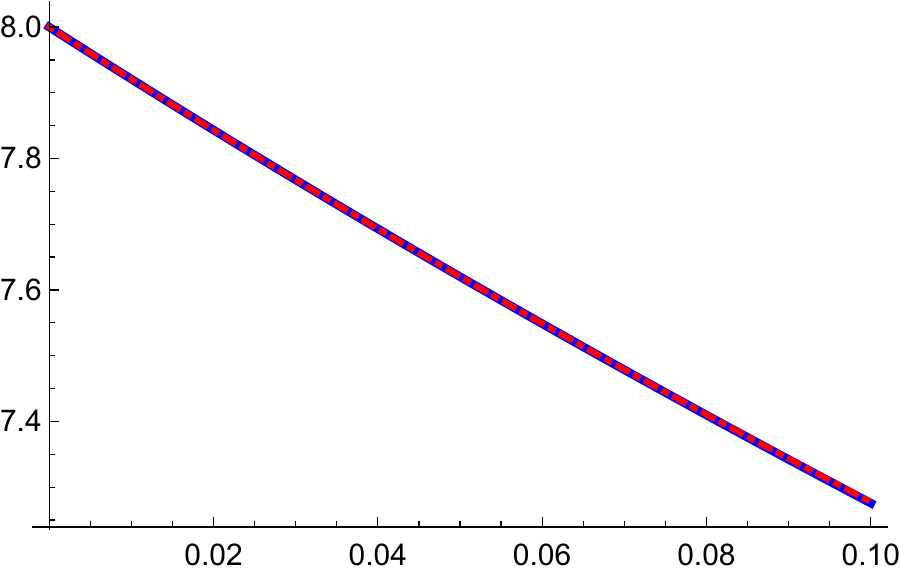}
			\includegraphics[width=0.25\linewidth]{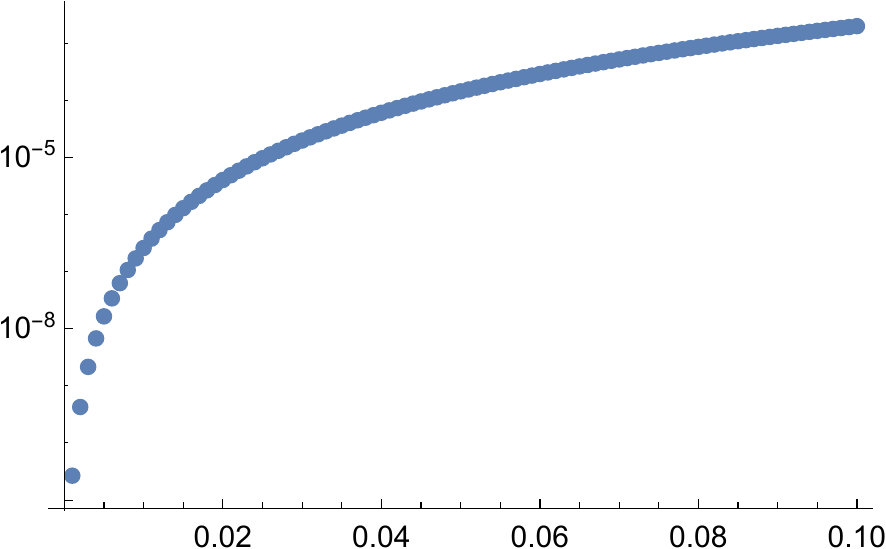}  \\
			\includegraphics[width=0.2\textwidth]{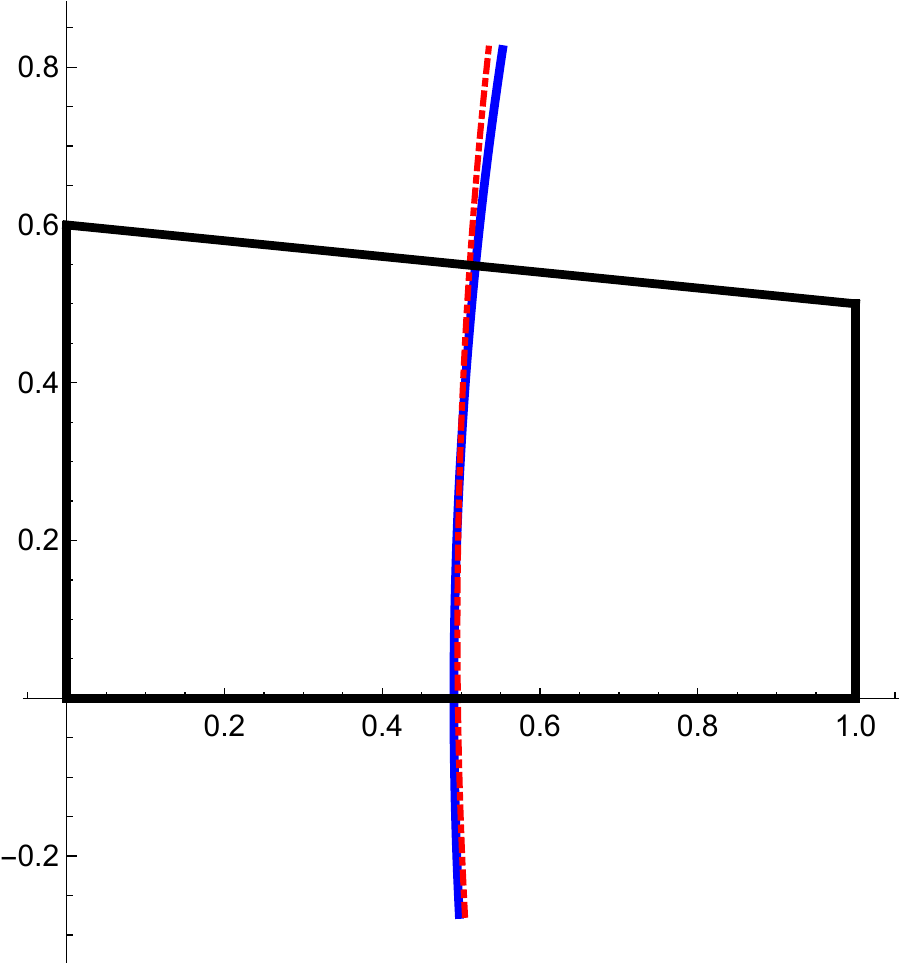}
			\caption{(Top Left) Ratio cut values corresponding to the optimal circular cuts (solid blue line) and parabolically approximated cuts (dot-dashed red line) for domains with various $a_1$ values between $0$ and $0.1$. (Top Right)  The absolute error between the approximate and optimal ratio cut values in the left plot.  (Bottom) The optimal cut (solid blue line) and parabolically approximated cut (dot-dashed red line) for the trapezoidal domain with $a_1 = 0.1$.}
				\label{compplot1}
		\end{figure}
	\begin{figure}
		\includegraphics[width=0.25\linewidth]{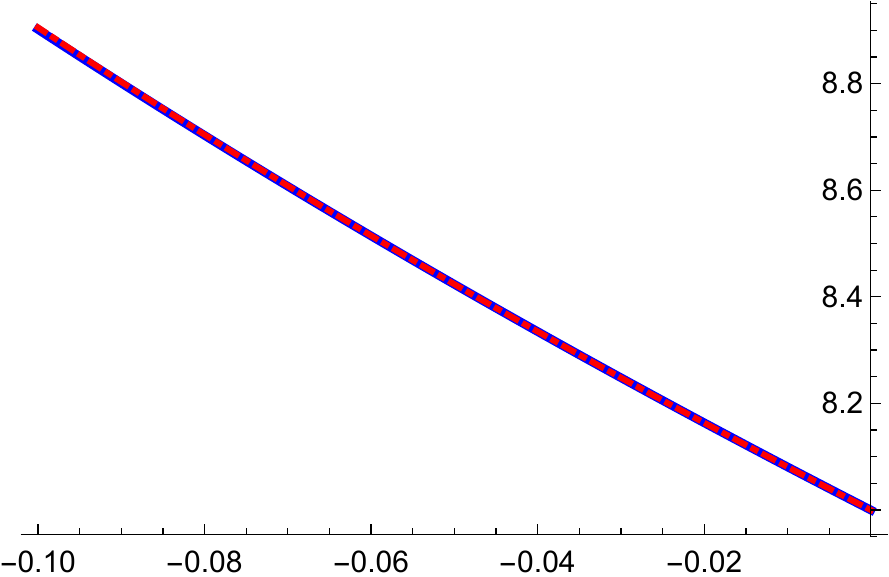}
		\includegraphics[width=0.25\linewidth]{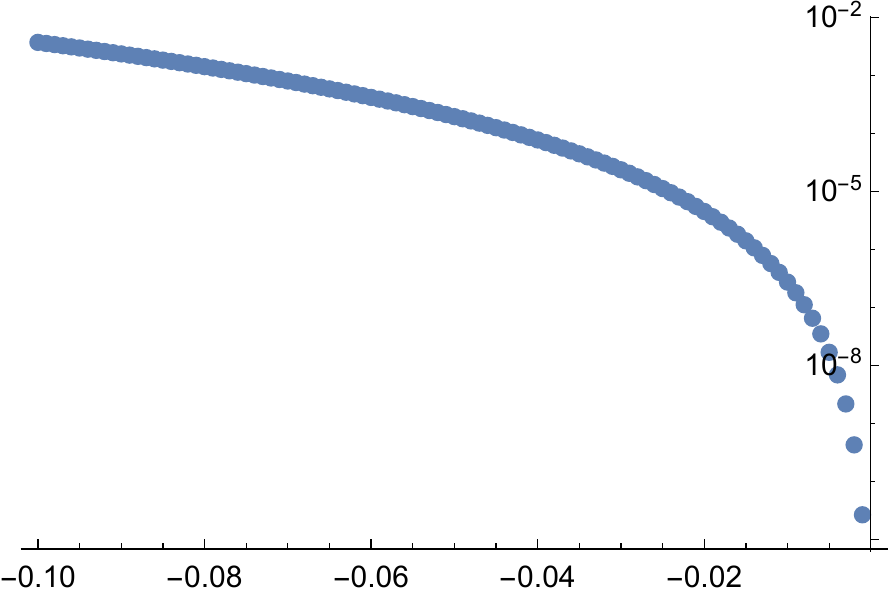}  \\
		\includegraphics[width=0.2\linewidth]{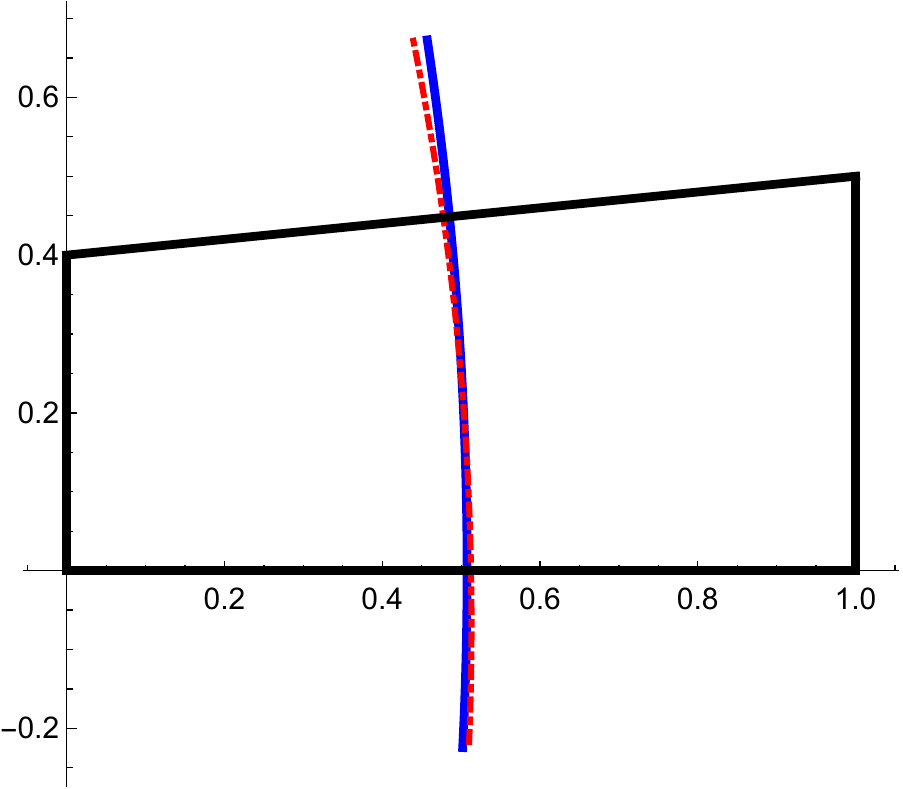}
		\caption{(Top Left) Ratio cut values corresponding to the optimal circular cuts (solid blue line) and parabolically approximated cuts (dot-dashed red line) for domains with various $a_1$ values between $-0.1$ and $0$. (Top Right)  The absolute error between the approximate and optimal ratio cut values in the left plot.  (Bottom) The optimal cut (solid blue line) and parabolically approximated cut (dot-dashed red line) for the trapezoidal domain with $a_1 = -0.1$.}		\label{compplot2}
		\end{figure}
			
	\begin{figure}	
		\includegraphics[width=0.25\linewidth]{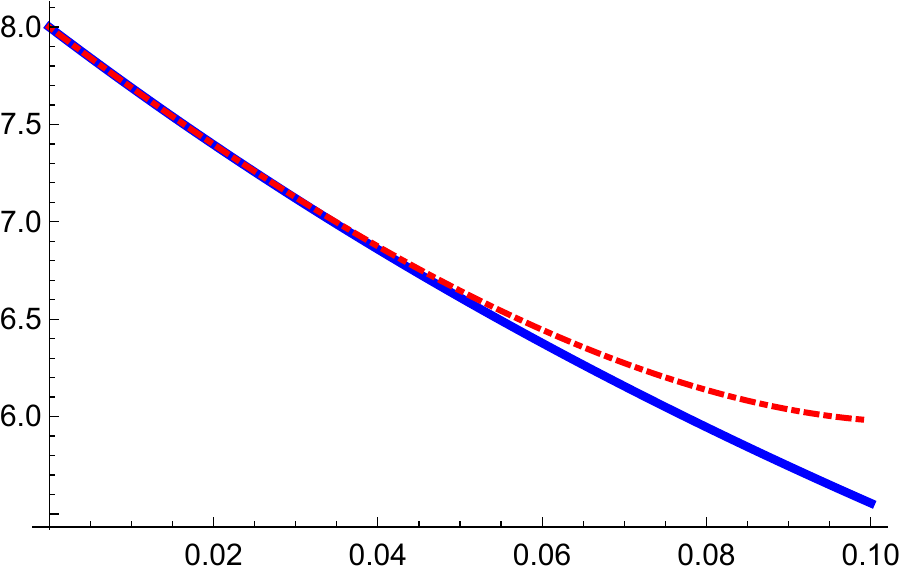}
		\includegraphics[width=0.25\linewidth]{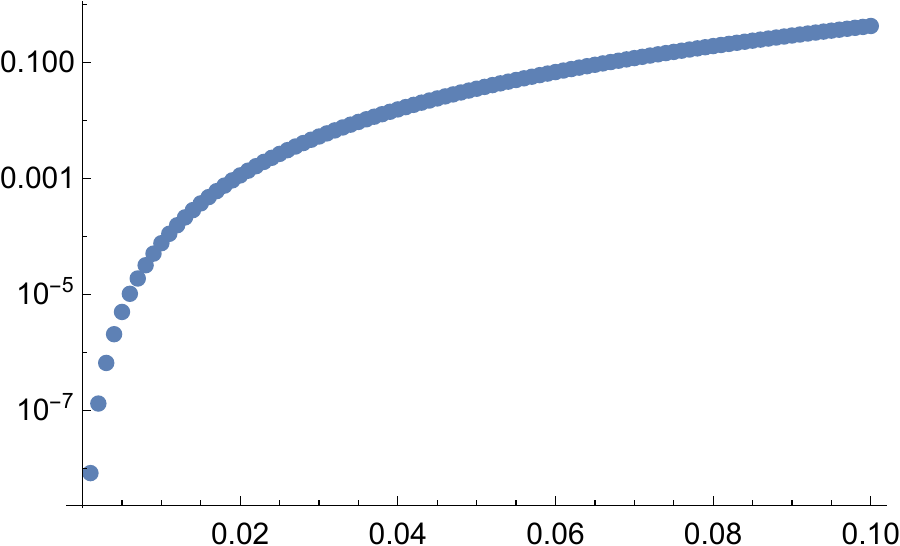}  \\
		\includegraphics[width=0.2\linewidth]{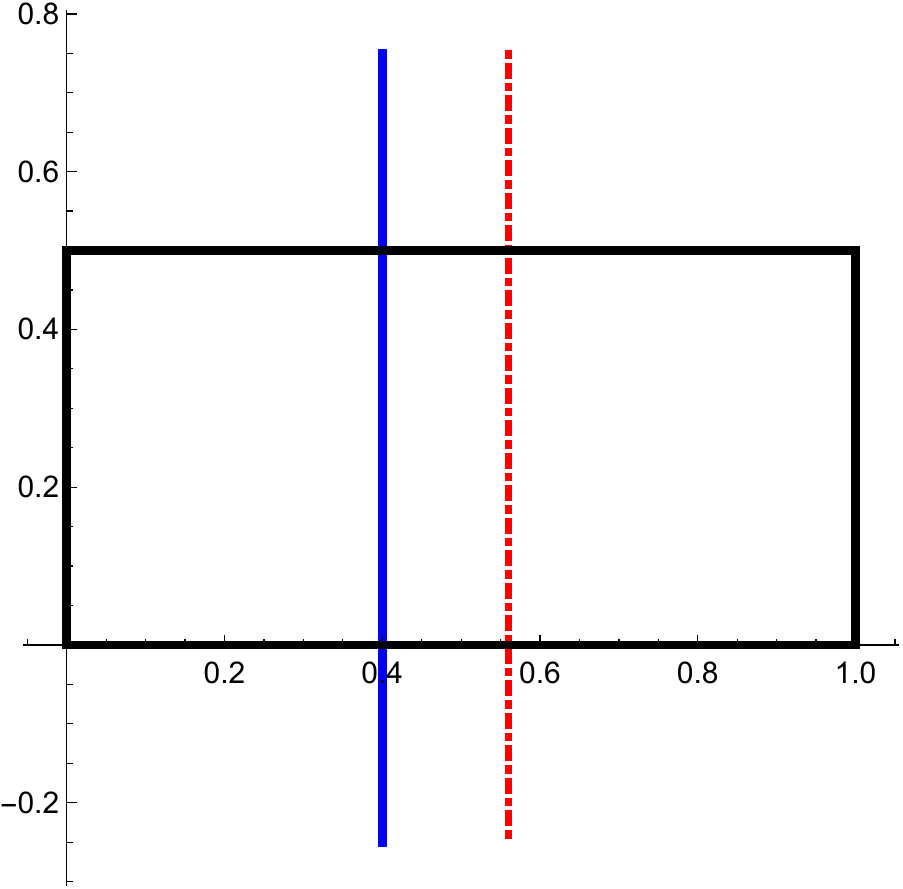}
		\caption{(Top Left) Ratio cut values corresponding to the optimal circular cuts (solid blue line) and parabolically approximated cuts (dot-dashed red line) for domains with various $A_{WL}$ values between $0$ and $0.1$. (Top Right)  The absolute error between the approximate and optimal ratio cut values in the left plot.  (Bottom) The optimal cut (solid blue line) and parabolically approximated cut (dot-dashed red line) for the trapezoidal domain with $A_{WL} = 0.1$.}
				\label{compplot3}
		\end{figure}
	\begin{figure}
		\includegraphics[width=0.25\linewidth]{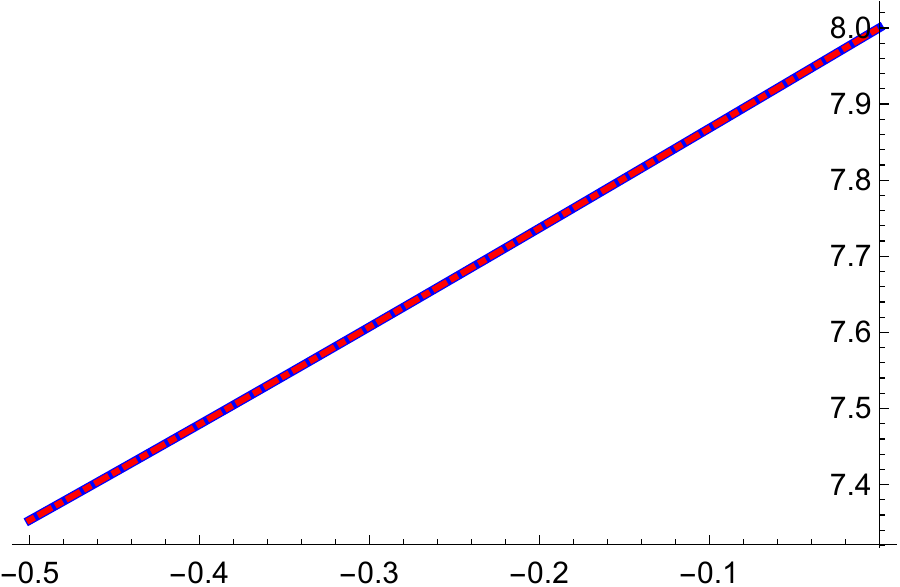}
		\includegraphics[width=0.25\linewidth]{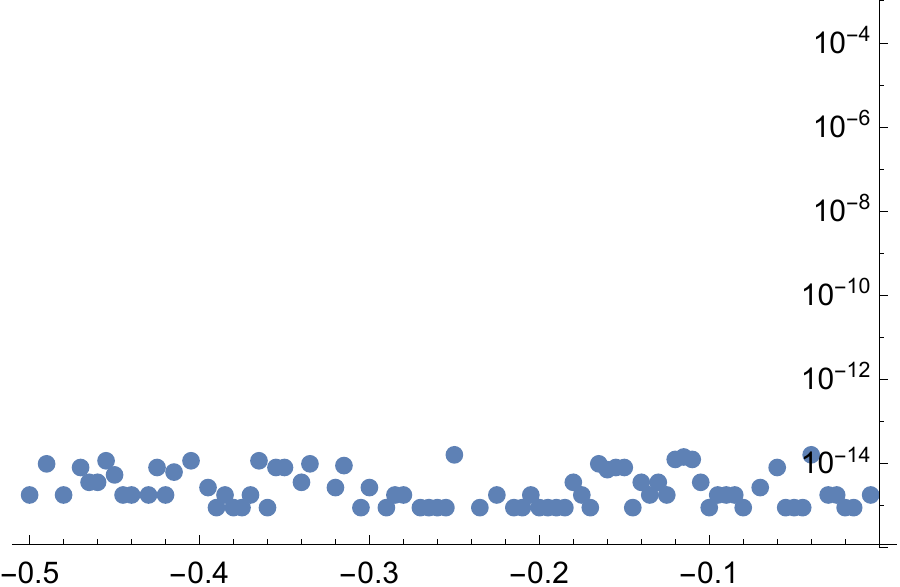} \\
		\includegraphics[width=0.2\linewidth]{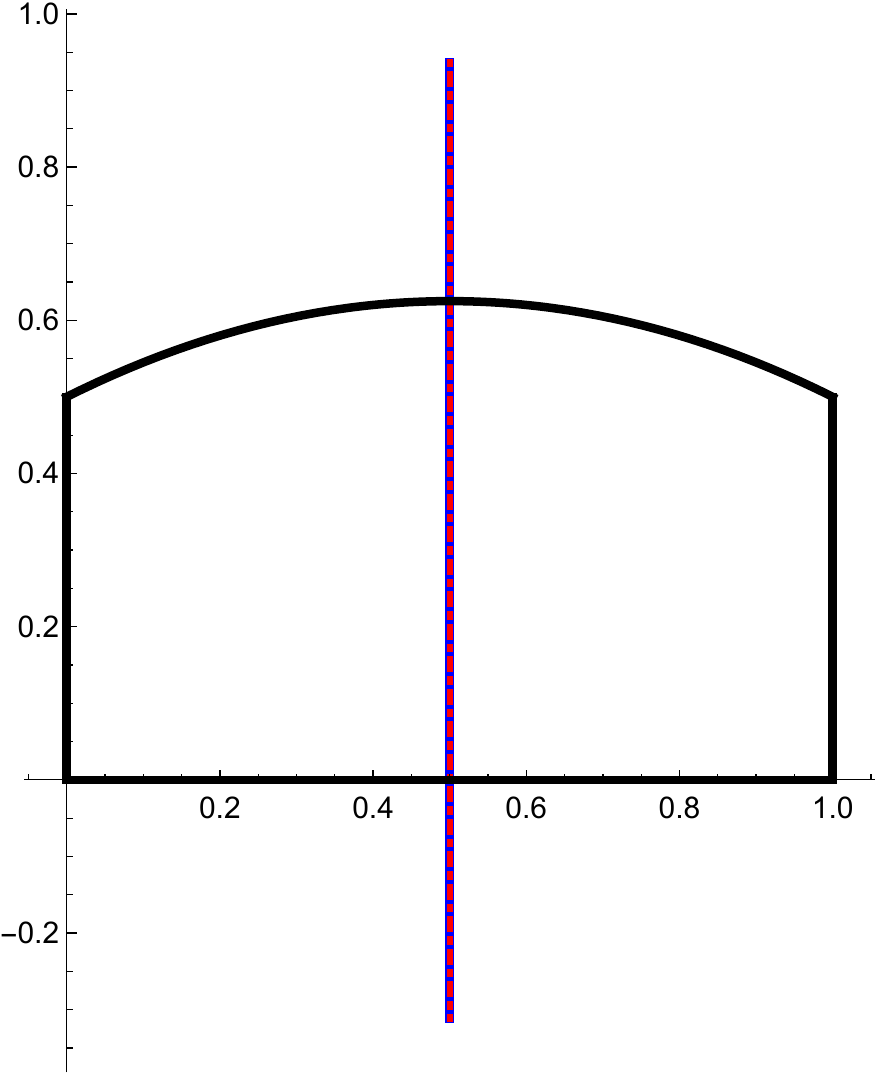}
		\caption{(Top Left) Ratio cut values corresponding to the optimal circular cuts (solid blue line) and parabolically approximated cuts (dot-dashed red line) for domains with various $\epsilon_t$ values between $-0.5$ and $0$. (Top Right)  The absolute error between the approximate and optimal ratio cut values in the left plot.  (Bottom) The optimal cut (solid blue line) and parabolically approximated cut (dot-dashed red line) for the trapezoidal domain with $\epsilon_t = -0.5$.}
				\label{compplot4}
	\end{figure}
	
	\begin{figure}
		\includegraphics[width=0.25\linewidth]{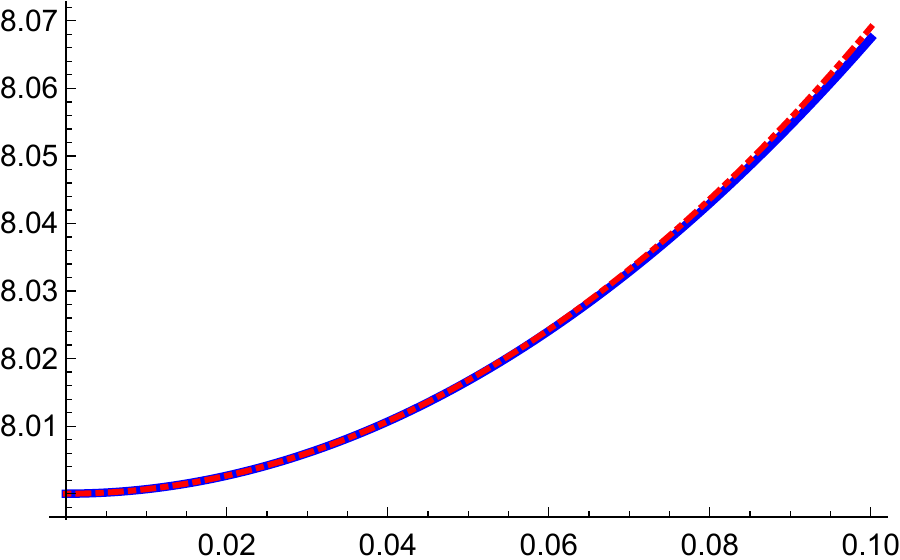}
		\includegraphics[width=0.25\linewidth]{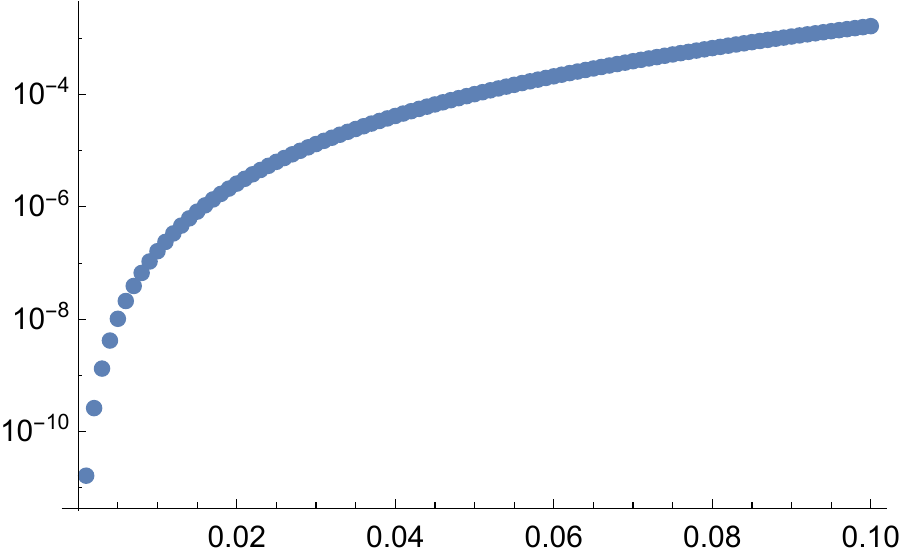} \\
		\includegraphics[width=0.2\linewidth]{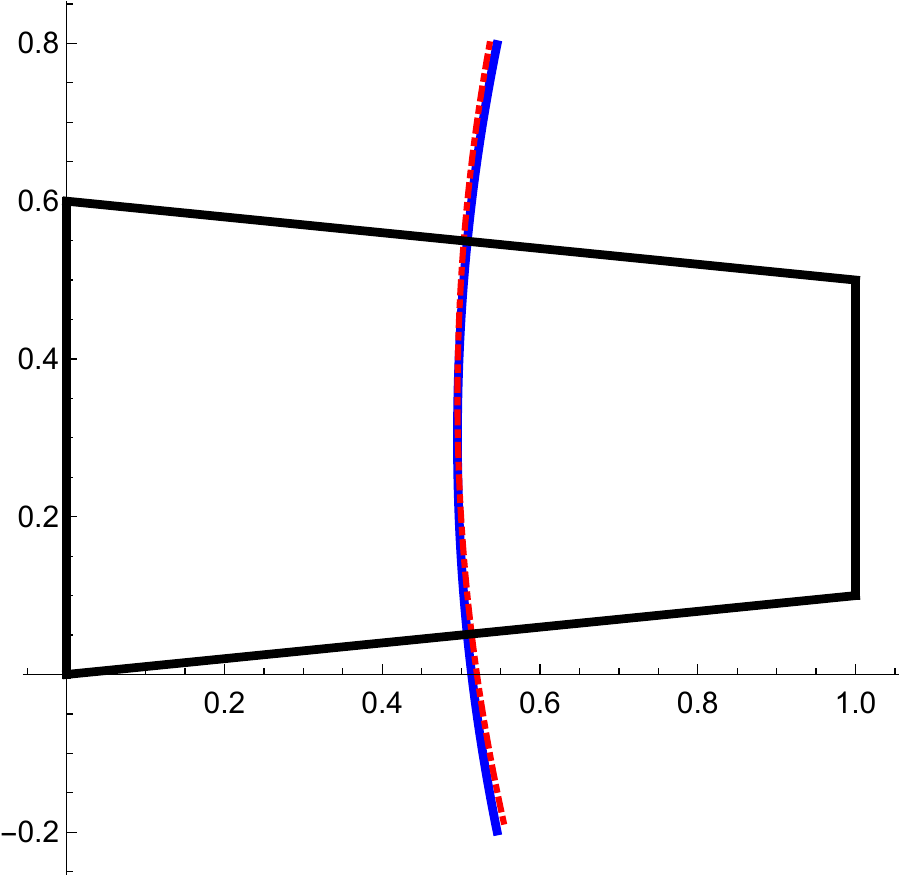}
		\caption{(Top Left) Ratio cut values corresponding to the optimal circular cuts (solid blue line) and parabolically approximated cuts (dot-dashed red line) for domains with various $a_1=a_3$ values between $0$ and $0.1$. (Top Right)  The absolute error between the approximate and optimal ratio cut values in the left plot.  (Bottom) The optimal cut (solid blue line) and parabolically approximated cut (dot-dashed red line) for the trapezoidal domain with $a_1 = a_3 = 0.1$.}
				\label{compplot5}
		\end{figure}
	\begin{figure}
		\includegraphics[width=0.25\linewidth]{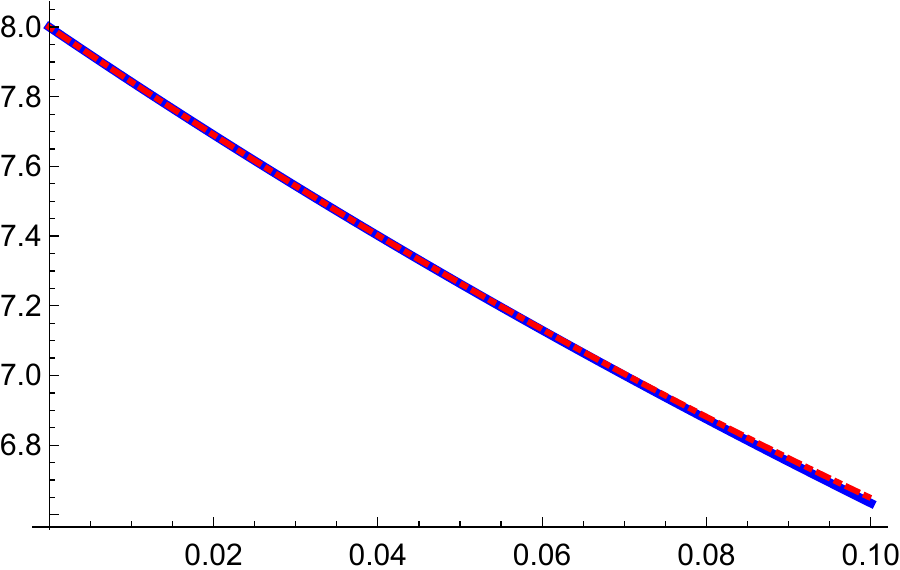}
		\includegraphics[width=0.25\linewidth]{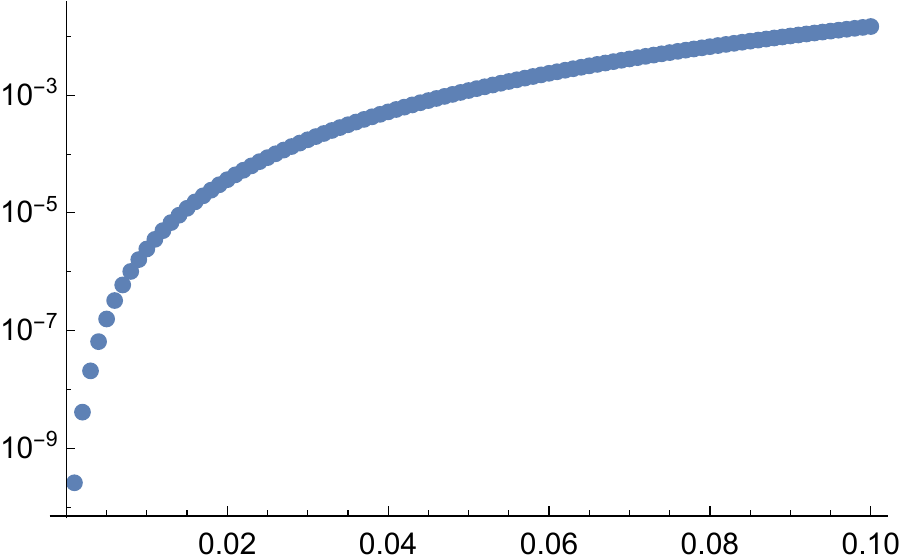} \\
		\includegraphics[width=0.2\linewidth]{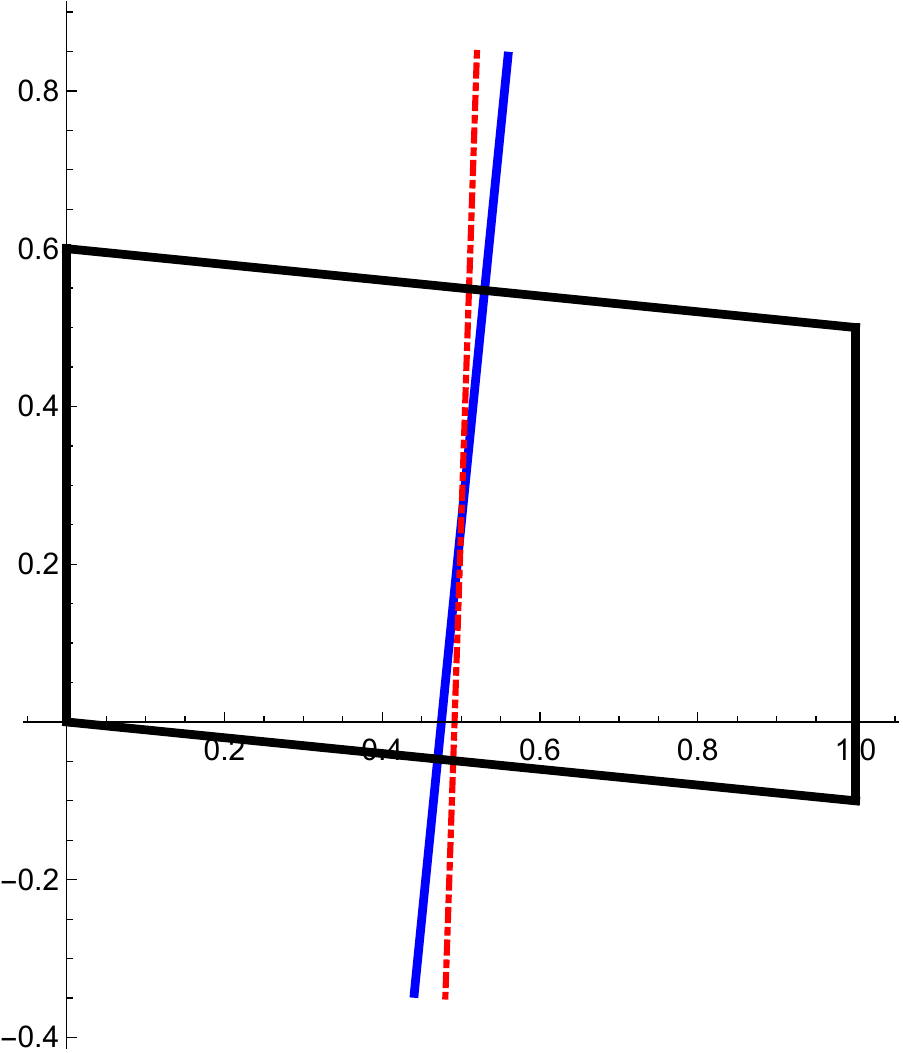}
		\caption{(Top Left) Ratio cut values corresponding to the optimal circular cuts (solid blue line) and parabolically approximated cuts (dot-dashed red line) for domains with various $a_1=-a_3$ values between $0$ and $0.1$. (Top Right)  The absolute error between the approximate and optimal ratio cut values in the left plot.  (Bottom) The optimal cut (solid blue line) and parabolically approximated cut (dot-dashed red line) for the trapezoidal domain with $a_1 = -a_3 = 0.1$.}
				\label{compplot6}
	\end{figure}
	
	\begin{figure}	
		\includegraphics[width=0.25\linewidth]{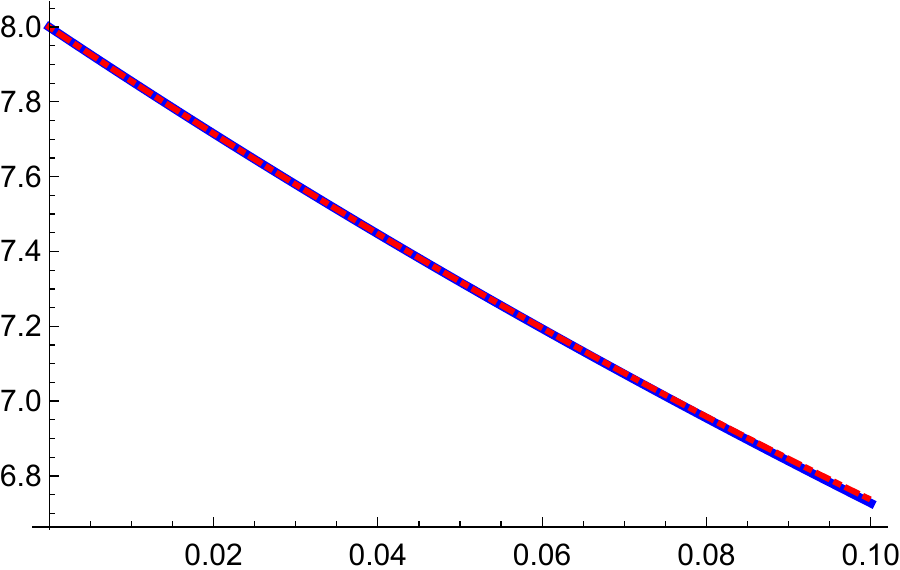}
		\includegraphics[width=0.25\linewidth]{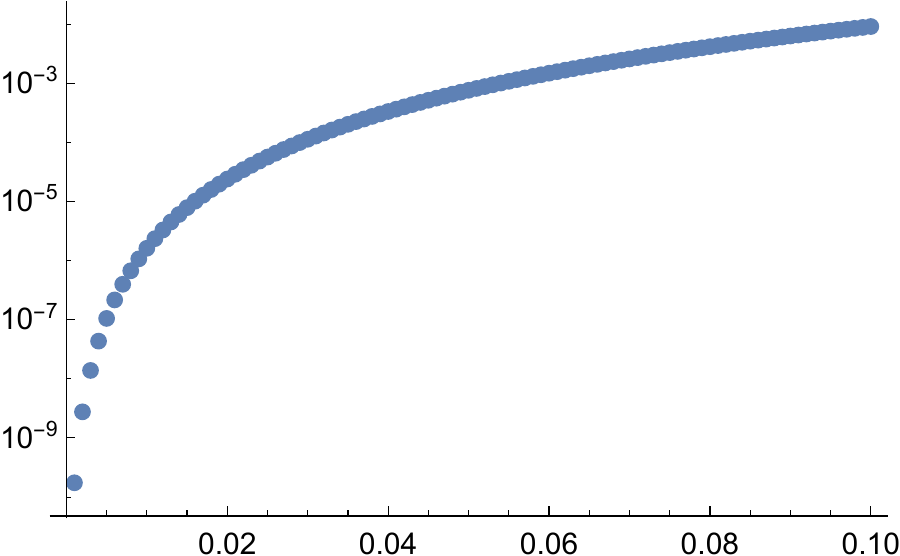} \\
		\includegraphics[width=0.2\linewidth]{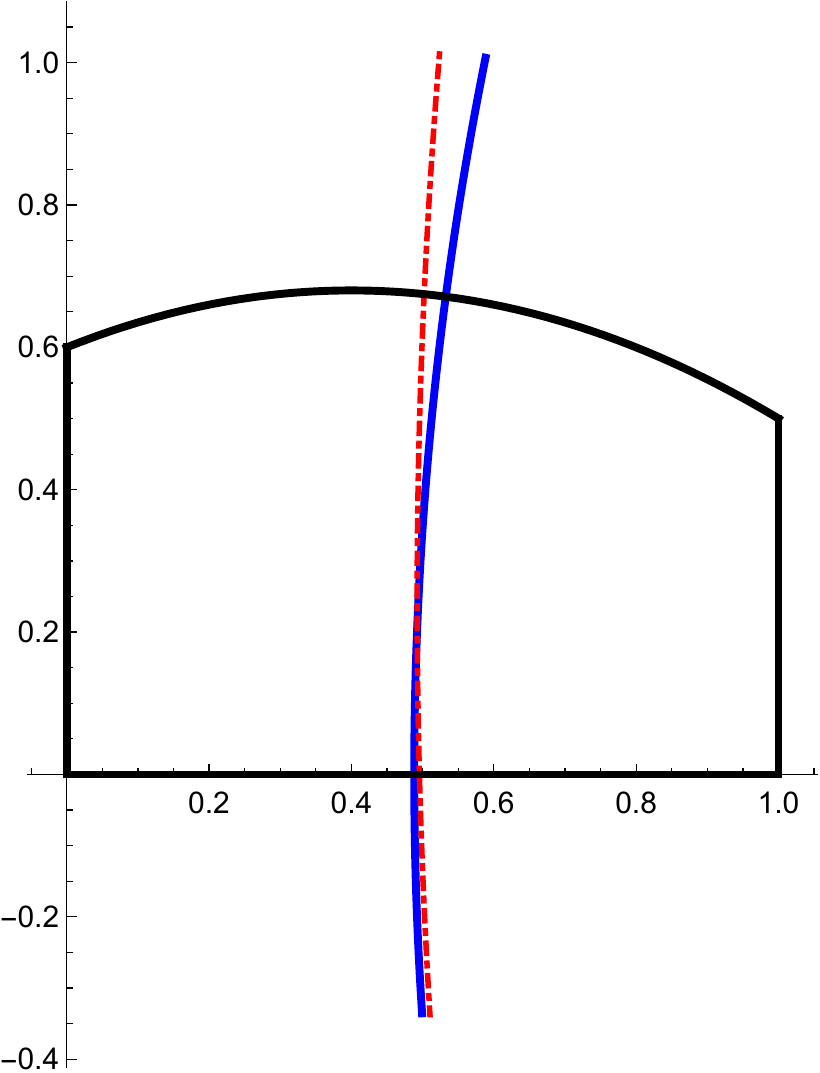}
		\caption{ (Top Left) Ratio cut values corresponding to the optimal circular cuts (solid blue line) and parabolically approximated cuts (dot-dashed red line) for domains with various $a_1=-\epsilon_t/5$ values between $0$ and $0.1$. (Top Right)  The absolute error between the approximate and optimal ratio cut values in the left plot.  (Bottom) The optimal cut (solid blue line) and parabolically approximated cut (dot-dashed red line) for the trapezoidal domain with $a_1 = -\epsilon_t/5 = 0.1$.}
				\label{compplot7}
		\end{figure}
	\begin{figure}
		\includegraphics[width=0.25\linewidth]{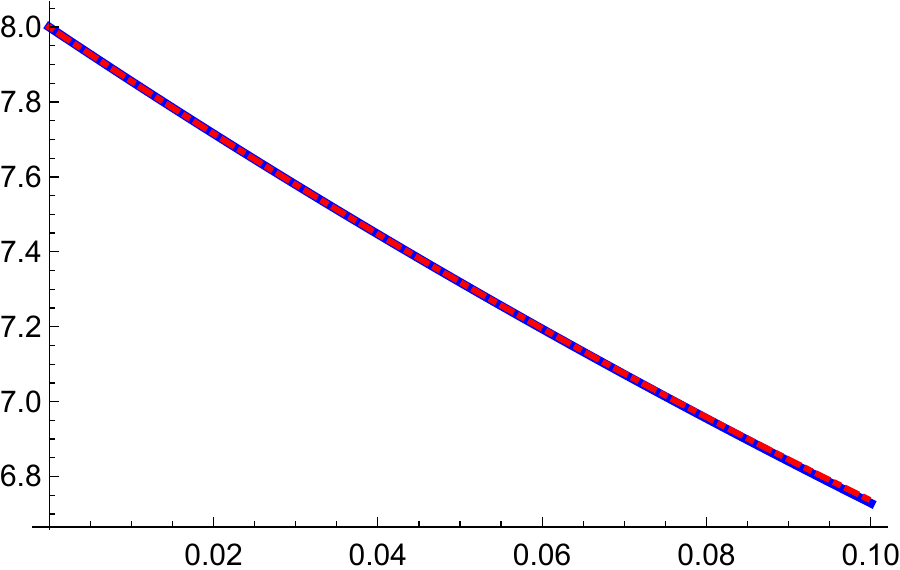}
		\includegraphics[width=0.25\linewidth]{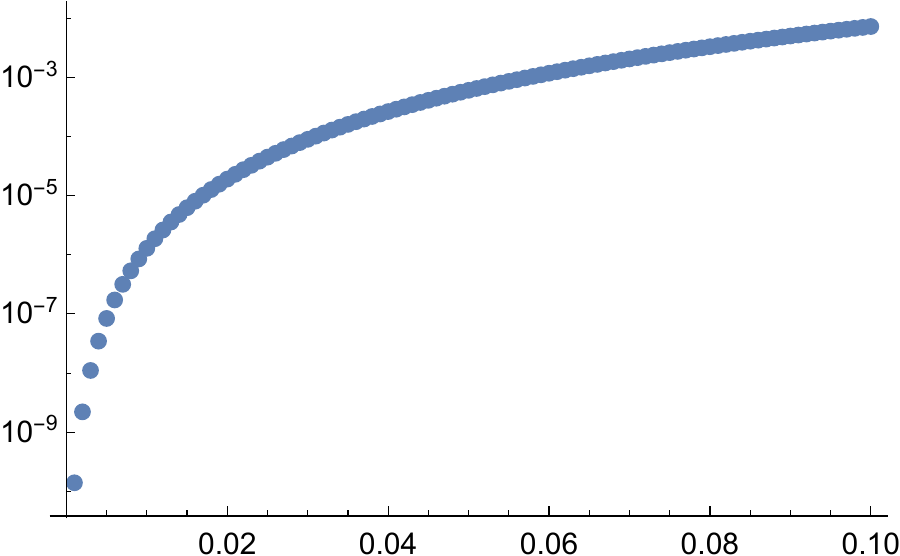}  \\
		\includegraphics[width=0.2\linewidth]{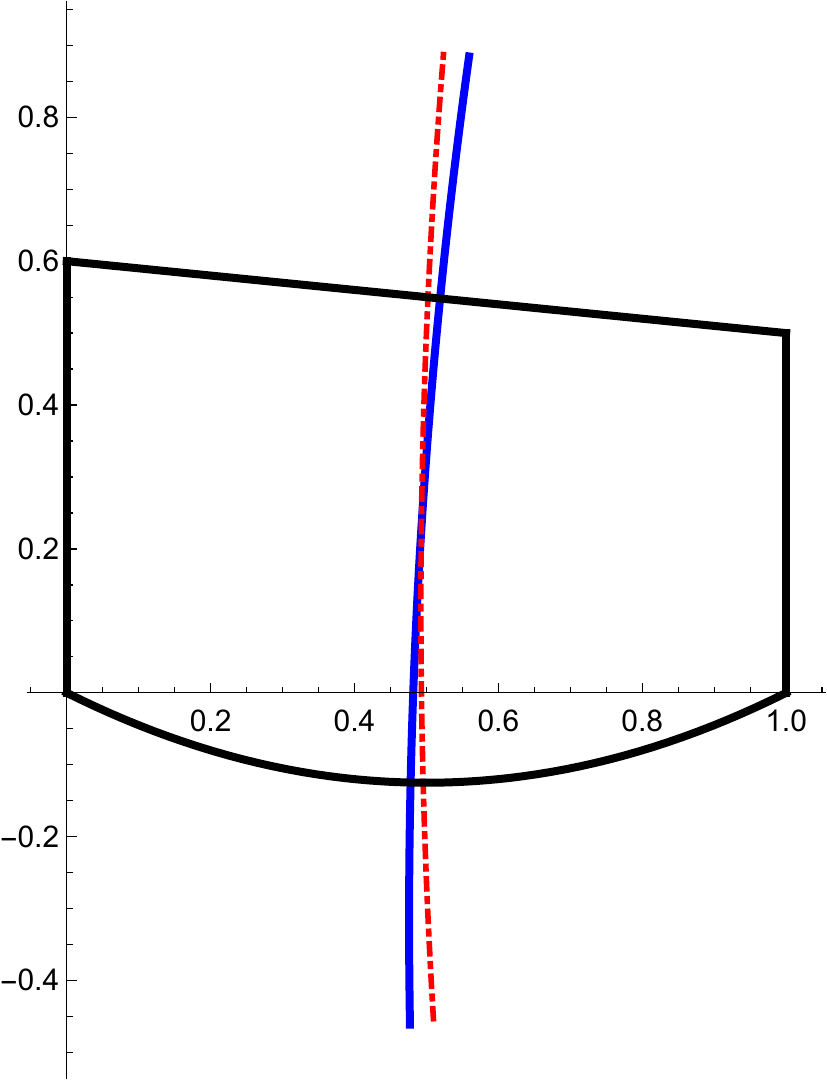}
		\caption{(Top Left) Ratio cut values corresponding to the optimal circular cuts (solid blue line) and parabolically approximated cuts (dot-dashed red line) for domains with various $a_1=\epsilon_b/5$ values between $0$ and $0.1$. (Top Right)  The absolute error between the approximate and optimal ratio cut values in the left plot.  (Bottom) The optimal cut (solid blue line) and parabolically approximated cut (dot-dashed red line) for the trapezoidal domain with $a_1 = \epsilon_b/5 = 0.1$.}
				\label{compplot8}
	\end{figure}

\clearpage
\appendix
\section{Taylor series coefficients}
	
	The polynomials in the Taylor series of the Ratio Cut are as follows:
	
	\noindent {\bf The $O(a_1)$ term:}
	\begin{align*}
	p_{a_1} 	&= -8 + 8 \tBCent - 8 \tTCent + \frac{2}{3}\Th \\
	& \quad - 56 \tBCent^2 + 80 \tBCent \tTCent + 0 \tBCent \Th \\
	&\quad - 56 \tTCent^2 + 0 \tTCent \Th - \frac{5}{18} \Th^2,
		\end{align*}

	\noindent {\bf The $O(a_2)$ term:}		
	\begin{align*}
	p_{a_2} 	&= -8 -8\tBCent + 8\tTCent - \frac{2}{3} \Th \\
	&\quad -56 \tBCent^2 +80 \tBCent \tTCent + 0 \tBCent \Th  \\
	&\quad -56 \tTCent^2 + 0 \tTCent \Th -\frac{5}{18} \Th^2, 
		\end{align*}
		
	\noindent {\bf The $O(a_3)$ term:}				
	\begin{align*}
	p_{a_3} 	&= 8 - 8\tBCent + 8\tTCent + \frac{2}{3}\Th \\
	&\quad + 56 \tBCent^2 -80 \tBCent \tTCent + 0 \tBCent\Th \\
	&\quad + 56 \tTCent^2 + 0 \tTCent \Th + \frac{5}{18}\Th^2,
	\end{align*}
	
	\noindent {\bf The $O(\epsilon_t)$ term:}			
	\begin{align*}
	p_{\epsilon_t} 	&= \frac{4}{3} + 0 \tBCent + 0 \tTCent + 0\Th\\
	&\quad + \frac{52}{3}\tBCent^2 -40 \tBCent \tTCent -\frac{8}{9} \tBCent \Th  \\
	&\quad +\frac{100}{3} \tTCent^2 - \frac{8}{9} \tTCent \Th - \frac{1}{108} \Th^2 ,	
	\end{align*}
	
\noindent {\bf The $O(\epsilon_b)$ term:}			
	\begin{align*}
	p_{\epsilon_b} 	&= -\frac{4}{3} + 0\tBCent + 0\tTCent + 0\Th\\
	&\quad - \frac{100}{3} \tBCent^2 + 40 \tBCent \tTCent + \frac{8}{9} \tBCent \Th\\
	&\quad - \frac{52}{3} \tTCent^2 + \frac{8}{9} \tTCent \Th + \frac{1}{108} \Th^2,
	\end{align*}

\noindent {\bf The $O(A_{\texttt{WL}})$ term:}		
	\begin{align*}
	p_{A_{\texttt{WL}}} 	&= -32 + 32 \tBCent + 32 \tTCent + \frac{8}{3} \Th \\
	&\quad - 128 \tBCent^2 + 0 \tBCent \tTCent - \frac{32}{3} \tBCent \Th \\
	&\quad - 128 \tTCent^2 - \frac{32}{3}\tTCent \Th - \frac{16}{9} \Th^2  ,
		\end{align*}
		
		\noindent {\bf The $O(A_{\texttt{WR}})$ term:}		
	\begin{align*}
	p_{A_{\texttt{WR}}} 	&= -32  -32 \tBCent -32 \tTCent - \frac{8}{3} \Th\\
	&\quad -128 \tBCent^2 + 0 \tBCent\tTCent -\frac{32}{3} \tBCent\Th\\
	&\quad -128 \tTCent^2 - \frac{32}{3} \tTCent \Th - \frac{16}{9} \Th^2, 
	\end{align*}
	
	\noindent {\bf The $O(a_1^2)$ term:}		
	\begin{align*}
	p_{a_1a_1} 	&= 20 -32 \tBCent + 32 \tTCent - \frac{4}{3} \Th\\
	&\quad + 240 \tBCent^2 - 384 \tBCent\tTCent +4 \tBCent\Th\\
	&\quad + 208 \tTCent^2 -4  \tTCent \Th + \Th^2,
		\end{align*}
		
			\noindent {\bf The $O(a_1 a_2)$ term:}		
	\begin{align*}
	p_{a_1a_2} 	&= 12 + 0\tBCent + 0\tTCent + 0\Th\\
	&\quad + 176 \tBCent^2 - 320 \tBCent\tTCent -4 \tBCent\Th\\
	&\quad + 208 \tTCent^2 + 4\tTCent \Th + \frac{1}{3} \Th^2,
		\end{align*}
		
			\noindent {\bf The $O(a_1 a_3)$ term:}		
	\begin{align*}
	p_{a_1a_3} 	&= -12 + 32 \tBCent -32 \tTCent + 0 \Th\\
	&\quad -192 \tBCent^2 + 320 \tBCent\tTCent + 0\tBCent\Th\\
	&\quad -192  \tTCent^2 + 0\tTCent \Th - \frac{1}{3} \Th^2, 
	\end{align*}
	
		\noindent {\bf The $O(a_2^2)$ term:}		
	\begin{align*}
	p_{a_2a_2} 	&= 20 + 32 \tBCent -32  \tTCent + \frac{4}{3} \Th\\
	&\quad + 240 \tBCent^2 -384 \tBCent\tTCent + 4 \tBCent\Th\\
	&\quad + 208 \tTCent^2 - 4 \tTCent \Th + \Th^2,
		\end{align*}
		
		\noindent {\bf The $O(a_2 a_3)$ term:}			
	\begin{align*}
	p_{a_2a_3} 	&= -20 + 0 \tBCent + 0 \tTCent - \frac{4}{3} \Th\\
	&\quad -224 \tBCent^2 + 384 \tBCent\tTCent + 0 \tBCent\Th\\
	&\quad -224 \tTCent^2 + 0 \tTCent \Th - \Th^2,
		\end{align*}
		
			\noindent {\bf The $O(a_3^2)$ term:}		
	\begin{align*}
	p_{a_3a_3} 	&= 20 - 32 \tBCent + 32 \tTCent + \frac{4}{3} \Th\\
	&\quad + 208 \tBCent^2 -384 \tBCent\tTCent - 4 \tBCent\Th\\
	&\quad + 240 \tTCent^2 + 4 \tTCent \Th + \Th^2,
	\end{align*}
	
		\noindent {\bf The $O(a_1 A_{\texttt{WL}} )$ term:}		
	\begin{align*}
	p_{a_1 A_{\texttt{WL}}} 	&= 80 - 128 \tBCent -64 \tTCent - 8 \Th\\
	&\quad + 512 \tBCent^2 - 256 \tBCent\tTCent + \frac{80}{3} \tBCent\Th\\
	&\quad + 448 \tTCent^2 + \frac{32}{3} \tTCent \Th + 4 \Th^2,
		\end{align*}
	
		\noindent {\bf The $O(a_1 A_{\texttt{WR}} )$ term:}				
	\begin{align*}
	p_{a_1 A_{\texttt{WR}}} 	&= 48 + 0 \tBCent + 64 \tTCent - \frac{8}{3} \Th\\
	&\quad + 256 \tBCent^2  - 256  \tBCent\tTCent - \frac{16}{3} \tBCent\Th\\
	&\quad + 320 \tTCent^2 + \frac{32}{3} \tTCent \Th + \frac{4}{3} \Th^2,
		\end{align*}
		
		\noindent {\bf The $O(a_1 \epsilon_t )$ term:}				
	\begin{align*}
	p_{a_1 \epsilon_t} &= -\frac{8}{3} + \frac{8}{3} \tBCent - 8 \tTCent -\frac{1}{9} \Th\\
	&\quad -72 \tBCent^2 + \frac{464}{3} \tBCent\tTCent + \frac{8}{9} \tBCent\Th\\
	&\quad - 104 \tTCent^2 + \frac{8}{9} \tTCent \Th - \frac{1}{9} \Th^2,
		\end{align*}
		
				\noindent {\bf The $O(a_1 \epsilon_b )$ term:}				
	\begin{align*}
	p_{a_1 \epsilon_b} &= \frac{8}{3} -\frac{8}{3} \tBCent + 8 \tTCent + \frac{1}{9} \Th\\
	&\quad + 104 \tBCent^2 - \frac{464}{3}  \tBCent\tTCent - \frac{8}{9} \tBCent\Th\\
	&\quad + 72 \tTCent^2 - \frac{8}{9} \tTCent \Th + \frac{1}{9} \Th^2,
	\end{align*} 
	
			\noindent {\bf The $O(a_2 A_{\texttt{WL}}  )$ term:}				
	\begin{align*}
	p_{a_2 A_{\texttt{WL}}} 	&= 48 + 0 \tBCent - 64 \tTCent + \frac{8}{3} \Th\\
	&\quad + 256 \tBCent^2 - 256 \tBCent\tTCent + - \frac{16}{3}\tBCent\Th\\
	&\quad + 320 \tTCent^2 + \frac{32}{3} \tTCent \Th + \frac{4}{3} \Th^2,
		\end{align*}
		
					\noindent {\bf The $O(a_2 A_{\texttt{WR} } )$ term:}				
	\begin{align*}
	p_{a_2 A_{\texttt{WR}}} 	&= 80 + 128  \tBCent + 64 \tTCent + 8 \Th\\
	&\quad + 512 \tBCent^2 + \frac{80}{3} \tBCent\tTCent + \tBCent\Th\\
	&\quad + 448 \tTCent^2 + \frac{32}{3} \tTCent \Th + 4 \Th^2,
		\end{align*}
		
					\noindent {\bf The $O(a_2 \epsilon_t )$ term:}				
	\begin{align*}
	p_{a_2 \epsilon_t} &= - \frac{8}{3} - \frac{8}{3} \tBCent + 8 \tTCent + \frac{1}{9} \Th\\
	&\quad - 72 \tBCent^2 + \frac{464}{3} \tBCent\tTCent + \frac{8}{9} \tBCent\Th\\
	&\quad - 104 \tTCent^2 + \frac{8}{9} \tTCent \Th - \frac{1}{9} \Th^2	,
	\end{align*}
	
						\noindent {\bf The $O(a_2 \epsilon_b )$ term:}				
	\begin{align*}
	p_{a_2 \epsilon_b} &= \frac{8}{3} + \frac{8}{3} \tBCent - 8 \tTCent - \frac{1}{9} \Th\\
	&\quad + 104 \tBCent^2 - \frac{464}{3} \tBCent\tTCent - \frac{8}{9} \tBCent\Th\\
	&\quad + 72 \tTCent^2 - \frac{8}{9} \tTCent \Th + \frac{1}{9} \Th^2,
	\end{align*}
	
						\noindent {\bf The $O(a_3 A_{\texttt{WL}} )$ term:}				
	\begin{align*}
	p_{a_3 A_{\texttt{WL}}} 	&= -48 + 64 \tBCent + 0 \tTCent - \frac{8}{3} \Th\\
	&\quad -320 \tBCent^2 + 256 \tBCent\tTCent - \frac{32}{3} \tBCent\Th\\
	&\quad - 256 \tTCent^2 + \frac{16}{3} \tTCent \Th - \frac{4}{3} \Th^2,
		\end{align*}

\noindent {\bf The $O(a_3 A_{\texttt{WR} })$ term:}	
	\begin{align*}
	p_{a_3 A_{\texttt{WR}}} 	&= -80 -64 \tBCent -128 \tTCent - 8 \Th\\
	&\quad -448 \tBCent^2 + 256 \tBCent\tTCent - \frac{32}{3} \tBCent\Th\\
	&\quad -512 \tTCent^2 - \frac{80}{3} \tTCent \Th - 4 \Th^2,
		\end{align*}
		
\noindent {\bf The $O(a_3 \epsilon_t )$ term:}	
	\begin{align*}
	p_{a_3 \epsilon_t} &= \frac{8}{3} - 8 \tBCent + \frac{8}{3} \tTCent - \frac{1}{9} \Th\\
	&\quad + 72 \tBCent^2 - \frac{464}{3} \tBCent\tTCent - \frac{8}{9} \tBCent\Th\\
	&\quad + 104 \tTCent^2 - \frac{8}{9} \tTCent \Th + \frac{1}{9} \Th^2,
		\end{align*}
		
		\noindent {\bf The $O(a_3 \epsilon_b)$ term:}	
	\begin{align*}
	p_{a_3 \epsilon_b} &= -\frac{8}{3} + 8 \tBCent - \frac{8}{3} \tTCent + \frac{1}{9} \Th\\
	&\quad - 104 \tBCent^2 + \frac{464}{3} \tBCent\tTCent + \frac{8}{9} \tBCent\Th\\
	&\quad - 72 \tTCent^2 + \frac{8}{9} \tTCent \Th - \frac{1}{9} \Th^2,
	\end{align*}
	
	\noindent {\bf The $O(A_{\texttt{WL}}^2)$ term:}	
	\begin{align*}
	p_{A_{\texttt{WL}}A_{\texttt{WL}}} 	&= 256 - 512 \tBCent -512 \tTCent - \frac{128}{3} \Th\\
	&\quad + 1536 \tBCent^2 + 1024 \tBCent\tTCent + \frac{512}{3} \tBCent\Th\\
	&\quad + 1536 \tTCent^2 + \frac{512}{3} \tTCent \Th + \frac{160}{9} \Th^2,
		\end{align*}
		
	\noindent {\bf The $O(A_{\texttt{WL}}  A_{\texttt{WR}})$ term:}			
	\begin{align*}
	p_{A_{\texttt{WL}}A_{\texttt{WR}}}	&= 128 + 0 \tBCent + 0 \tTCent + 0 \Th\\
	&\quad + 512 \tBCent^2 + 0 \tBCent\tTCent + \frac{128}{3} \tBCent\Th\\
	&\quad + 512 \tTCent^2 + \frac{128}{3} \tTCent \Th + \frac{64}{9} \Th^2,
		\end{align*}
		
			\noindent {\bf The $O(A_{\texttt{WR}}^2)$ term:}	
	\begin{align*}
	p_{A_{\texttt{WR}} A_{\texttt{WR}}}	&= 256 + 512 \tBCent + 512  \tTCent + \frac{128}{3} \Th\\
	&\quad + 1536 \tBCent^2 + 1024 \tBCent\tTCent + \frac{512}{3} \tBCent\Th\\
	&\quad + 1536 \tTCent^2 + \frac{512}{3} \tTCent \Th + \frac{160}{9} \Th^2,
	\end{align*}
	
		\noindent {\bf The $O(A_{\texttt{WL}}  \epsilon_t)$ term:}	
	\begin{align*}
	p_{A_{\texttt{WL}} \epsilon_t} 	&= -16 + \frac{32}{3} \tBCent + \frac{32}{3} \tTCent - \frac{4}{9} \Th\\
	&\quad - \frac{320}{3} \tBCent^2 + \frac{512}{3} \tBCent\tTCent + \frac{32}{9} \tBCent\Th\\
	&\quad - \frac{512}{3} \tTCent^2 + \frac{32}{9} \tTCent \Th - \frac{8}{27} \Th^2,
		\end{align*}
		
		\noindent {\bf The $O(A_{\texttt{WL}}  \epsilon_b)$ term:}			
	\begin{align*}
	p_{A_{\texttt{WL}} \epsilon_b} 	&= 16 - \frac{32}{3} \tBCent - \frac{32}{3} \tTCent + \frac{4}{9} \Th\\
	&\quad + \frac{512}{3} \tBCent^2 - \frac{512}{3} \tBCent\tTCent - \frac{32}{9} \tBCent\Th\\
	&\quad + \frac{320}{3} \tTCent^2 - \frac{32}{9} \tTCent \Th + \frac{8}{27} \Th^2,
		\end{align*}
		
		\noindent {\bf The $O(A_{\texttt{WR}}  \epsilon_t)$ term:}	
	\begin{align*}
	p_{A_{\texttt{WR}} \epsilon_t} 	&= -16 - \frac{32}{3} \tBCent - \frac{32}{3} \tTCent + \frac{4}{9} \Th\\
	&\quad - \frac{320}{3} \tBCent^2 + \frac{512}{3} \tBCent\tTCent + \frac{32}{9} \tBCent\Th\\
	&\quad - \frac{512}{3} \tTCent^2 + \frac{32}{9} \tTCent \Th - \frac{8}{27} \Th^2,
		\end{align*}
		
			\noindent {\bf The $O(A_{\texttt{WR}}  \epsilon_b)$ term:}	
	\begin{align*}
	p_{A_{\texttt{WR}} \epsilon_b} 	&= 16 + \frac{32}{3} \tBCent + \frac{32}{3} \tTCent - \frac{4}{9} \Th\\
	&\quad + \frac{512}{3} \tBCent^2 - \frac{512}{3} \tBCent\tTCent  - \frac{32}{3} \tBCent\Th\\
	&\quad + \frac{320}{3} \tTCent^2 - \frac{32}{9} \tTCent \Th + \frac{8}{27} \Th^2,
	\end{align*}
	
			\noindent {\bf The $O( \epsilon_t^2)$ term:}	
	\begin{align*}
	p_{\epsilon_t \epsilon_t} 	&= 0 + 0 \tBCent + 0 \tTCent + 0 \Th\\
	&\quad + \frac{244}{9} \tBCent^2 - \frac{568}{9}  \tBCent\tTCent - \frac{4}{27} \tBCent\Th\\
	&\quad + \frac{436}{9} \tTCent^2 + \frac{4}{27} \tTCent \Th + \frac{5}{162} \Th^2,
		\end{align*}
		
					\noindent {\bf The $O( \epsilon_t  \epsilon_b)$ term:}	
	\begin{align*}
	p_{\epsilon_t \epsilon_b} 	&= 0 + 0 \tBCent + 0 \tTCent + 0 \Th\\
	&\quad - \frac{340}{9} \tBCent^2 + \frac{568}{9} \tBCent\tTCent + \frac{4}{27} \tBCent\Th\\
	&\quad - \frac{340}{9} \tTCent^2 + \frac{4}{27} \tTCent \Th - \frac{5}{162} \Th^2,
		\end{align*}
	
		\noindent {\bf The $O(\epsilon_b^2)$ term:}	
	\begin{align*}
	p_{\epsilon_b \epsilon_b} 	&= 0 + 0 \tBCent + 0 \tTCent + 0 \Th\\
	&\quad + \frac{436}{9} \tBCent^2 - \frac{568}{9} \tBCent\tTCent - \frac{4}{27} \tBCent\Th\\
	&\quad + \frac{244}{9} \tTCent^2 + \frac{4}{27} \tTCent \Th + \frac{5}{162} \Th^2.
	\end{align*}

\end{document}